
\documentclass[preprint, 12pt, sort&compress]{elsarticle}

\usepackage{latexsym}
\usepackage{amsfonts}
\usepackage{graphicx,amssymb,
}
\usepackage{amsmath}
\usepackage{amssymb}
\usepackage[mathscr]{eucal}
\usepackage{enumerate}
\usepackage{amsthm}

\begin{document}

\newtheorem{tm}{Theorem}[section]
\newtheorem{pp}{Proposition}[section]
\newtheorem{lm}{Lemma}[section]
\newtheorem{df}{Definition}[section]
\newtheorem{tl}{Corollary}[section]
\newtheorem{re}{Remark}[section]
\newtheorem{eap}{Example}[section]

\newcommand{\pof}{\noindent {\bf Proof} }
\newcommand{\ep}{$\quad \Box$}

\newcommand{\al}{\alpha}
\newcommand{\be}{\beta}
\newcommand{\var}{\varepsilon}
\newcommand{\la}{\lambda}
\newcommand{\de}{\delta}
\newcommand{\st}{\stackrel}

\allowdisplaybreaks


\begin{frontmatter}

\title{Characterizations of compact sets in fuzzy sets spaces with $L_p$ metric
\tnoteref{t1}
 }
\tnotetext[t1]{Project supported by
 National Natural Science
Foundation of China (No. 61103052), and by Natural Science Foundation of Fujian Province of China(No. 2016J01022)}
\author[huang]{Huan Huang \corref{cor1}}
\cortext[cor1]{Corresponding author}
\address[huang]{Department of Mathematics, Jimei
University, Xiamen 361021, China}
 \ead{hhuangjy@126.com }

\author[wu]{Congxin Wu}
 \address[wu]{Department of
Mathematics, Harbin Institute of Technology, Harbin 150001, China}
\ead{wucongxin@hit.edu.cn}

\date{}

\begin{abstract}
%
Compactness criteria for fuzzy sets spaces endowed with $L_p$ metric
have been studied for several decades.
In metric spaces, totally boundedness   is  a key feature  of compactness.
However,
compare the existing compactness criteria for fuzzy sets spaces endowed with $L_p$ metric with Arzel\`{a}--Ascoli theorem,
we can see that
the latter
gives the compactness criteria by characterizing the totally bounded sets
 while
the former does not seem to characterize the totally bounded sets.
Besides, till now,
 compactness criteria are only presented for
three particular fuzzy sets spaces,
they both have assumptions of convexity  or star-shapedness.
In recent years, general fuzzy sets become more and more important in both theory and applications.
Motivated by needs listed above,
in this paper,
 we
present characterizations of totally bounded sets, relatively compact sets
 and
compact sets in the fuzzy sets spaces $F_B(\mathbb{R}^m)$ and $F_B(\mathbb{R}^m)^p$ equipped with $L_p$ metric,
where
 $F_B(\mathbb{R}^m)$
 and
$F_B(\mathbb{R}^m)^p$
are two kinds of general fuzzy sets on $ \mathbb{R}^m$ which do not have any assumptions of convexity  or star-shapedness.
 Subsets
 of
 $F_B(\mathbb{R}^m)^p$   include common fuzzy sets
such as fuzzy numbers, fuzzy star-shaped numbers with respect to the origin,
 fuzzy star-shaped numbers,
and the general fuzzy star-shaped numbers introduced by Qiu et al.
The existed compactness criteria
are
stated for three kinds of fuzzy sets spaces endowed with $L_p$ metric
whose universe sets are the former three kinds of common fuzzy sets respectively.
 Constructing
completions
of   fuzzy sets spaces  with respect to $L_p$ metric is a problem which is closely dependent on characterizing totally bounded sets.
 Based on preceding characterizations of totally boundedness and relatively compactness
and some discussions on   convexity and star-shapedness of fuzzy sets,
we
show
 that the completions of
 fuzzy sets spaces mentioned in this paper can be obtained by using the $L_p$-extension.
We also clarify
 relation among all the ten fuzzy sets spaces discussed in this paper,
which consist of five pairs of original spaces and the corresponding completions.
Then,
we
show that the subspaces of $F_B(\mathbb{R}^m)$
 and
$F_B(\mathbb{R}^m)^p$ mentioned in this paper have parallel
 characterizations of totally bounded sets, relatively compact sets
 and
compact sets.
At last,
as applications of our results, we discuss properties of
$L_p$ metric on fuzzy sets space
and
 relook compactness criteria proposed in previous work.

\end{abstract}

\begin{keyword}
 Fuzzy sets; Compact sets; Totally bounded sets; $L_p$ metric; Completions
\end{keyword}

\end{frontmatter}


\section{Introduction}

Compactness is a fundamental property in both theory and applications \cite{kelley,fs,gh3}.
The research of compactness criteria   attracts much attention.
 It's well-known that  Arzel\`{a}--Ascoli theorem(s)
provide compactness criteria in classic analysis and topology.
Fuzzy systems
have been successfully
used
to
 solve
many real-world problems \cite{aliev, he, chen, zeng, chanussot}.
Undoubtedly, compactness plays an important role in analysis and applications of fuzzy sets and systems \cite{bu,roman2,huang3,huang6,huang8,wa,zeng}.
There exist
many important and interesting works including \cite{da2,da3,greco,greco3,huang,ma,wu2,zhao,fan,roman,roman2}
which   characterized   compactness
in fuzzy sets spaces equipped with different topologies.

Since Diamond and Kloeden \cite{da3} introduced $d_p$ metric which is a $L_p$-type metric,
it has become one of the most often used
convergence structure
on fuzzy sets.
Naturally, people have started to consider characterizations of compactness in
fuzzy sets spaces with $d_p$ metric.

Diamond and Kloeden \cite{da3}
gave   compactness criteria for   fuzzy number space $E^m$
with
  $d_p$ metric.
Ma \cite{ma} modified the characterization given by   Diamond and Kloeden.
Convexity
is
 a very useful property. Star-shapedness is    a natural extension of convexity.
Of
cause,
research on
fuzzy counterparts of star-shaped sets
 has aroused the interest of people \cite{da, chanussot}.
Diamond \cite{da2}
introduced the fuzzy star-shaped numbers as an extension of fuzzy numbers.
$S^m$
is
used to denote the set of all fuzzy star-shaped numbers.
Diamond \cite{da2}
characterized the compact sets in
$(S^m_0, d_p)$, where $S^m_0$ denotes the set of all the fuzzy star-shaped numbers with respect to the origin.
$E^m$ and $S^m_0$ do not include each other.
They both are subsets of $S^m$.

Wu
and Zhao \cite{wu2} pointed out that the compactness criteria for $(E^m, d_p)$ and $(S^m_0, d_p)$   in \cite{da2, da3}
have
the same type of error
 and
that
 the modified compactness criteria in \cite{ma} still has fault.
They    \cite{wu2}
gave
right
 characterizations of compactness
in
$(S^m_0, d_p)$ and $(E^m, d_p)$.
Based on the results in \cite{wu2},
Zhao and Wu \cite{zhao}
further
proposed
a characterization of compactness
in
$(S^m, d_p)$.
In these discussions,
 it is found that
 the concepts
``$p$-mean equi-left-continuous'' and ``uniformly $p$-mean bounded''
proposed by  Diamond and Kloeden \cite{da3} and Ma \cite{ma}, respectively,
play an important role
in establishing and illustrating
 characterizations of compactness in
fuzzy sets spaces with $d_p$ metric.

Compare the characterizations in \cite{wu2, zhao} to Arzel\`{a}--Ascoli theorem,
we find that
the latter
provides the compactness criteria by characterizing the totally bounded sets
 while the former does not seem to characterize the totally bounded sets.
Since, in metric space, totally boundedness   is  a key feature  of compactness,
it is a
natural and important problem to consider how to characterize totally bounded sets
in fuzzy sets spaces with $d_p$ metric?

The existing three compactness criteria in \cite{wu2,zhao} are stated for
three
particular fuzzy sets spaces with $d_p$ metric.
Fuzzy sets
in
these
spaces have assumption of convexity  or star-shapedness.
It is worth noting
that
general fuzzy sets, which have no assumptions of convexity  or star-shapedness, have attracted more and more attention
in
both theory and applications \cite{ba, kloeden, zeng}.
So
this has caused a basic and important problem:
how
to
characterize totally bounded sets, relatively compact set
and
compact
sets in   general fuzzy sets spaces endowed with $d_p$ metric?

There is another motivation to think about the problem above.
Clearly,
there exists other types of particular fuzzy sets.
For example,
Qiu et al.\cite{qiu}
introduced
the set of
all
general fuzzy star-shaped numbers,
which is denoted by $\widetilde{S}^m$.
$S^m$ is a subset
of
$\widetilde{S}^m$ which in turn is a subset of $F_{B}(  \mathbb{R}^m )$.
If we characterize totally bounded sets, relatively compact sets and compact sets
in
general fuzzy sets spaces,
then
we
can
obtain   parallel    characterizations for particular fuzzy sets spaces immediately because the latter are subspaces of the former.

Analysis of Diamond \cite{da}   indicates
that $(E^m, d_p)$, $(S^m_0, d_p)$, $(S^m, d_p)$ and $(\widetilde{S}^m, d_p)$
are
not
complete.    Kr\"{a}tschmer \cite{kratschmer} presented the completion of $(E^m, d_p)$
which is described by
the support functions of fuzzy numbers.
It
is
 natural to consider a basic
problem:
what are the completions of all the rest spaces?
Perhaps this question should be replaced by a more general question: how to generate the completions of fuzzy sets
spaces
with respect to $d_p$ metric?
This
problems are closely relevant to the problem of characterizing
 totally bounded sets and relatively compact sets
in fuzzy sets spaces equipped
with
$d_p$ metric.

In this paper, we want
to answer all questions above.
These questions
 are closely relevant to each other.
It can even be said that they are different aspects of a same problem.
To
put our discussion in a more general setting
which
 does not have any assumptions of convexity  or star-shapedness,
we
consider $F_{B}(  \mathbb{R}^m )$
which is
the set of all
 normal, upper semi-continuous, compact-support     fuzzy sets on $ \mathbb{R}^m$.
Further
we
introduce the $L_p$-extension of a fuzzy sets space.
The $L_p$-extensions of $(S_0^{m}, d_p)$,
$(E^m, d_p)$, $(S^m, d_p)$, $(\widetilde{S}^m, d_p)$ and $(F_{B}(  \mathbb{R}^m ), d_p)$ are denoted by $(S_0^{m,p}, d_p)$,
$(E^{m,p}, d_p)$, $(S^{m,p}, d_p)$, $(\widetilde{S}^{m,p}, d_p)$
and
$(F_{B}   (  \mathbb{R}^m )^p, d_p) $, respectively.
All the fuzzy sets spaces
mentioned
in this paper are subspaces of $(F_{B}   (  \mathbb{R}^m )^p,    d_p) $.
We give
characterizations
of totally bounded sets,
 relatively compact sets, and compact sets
in
$(F_{B}   (  \mathbb{R}^m ), d_p) $
and
$(F_{B}   (  \mathbb{R}^m )^p,    d_p) $.
Then
it is proved that
each
$L_p$-extension mentioned in this paper is exactly the completion of its original space.
Next,
we
show
that
the subspaces of $(F_{B}   (  \mathbb{R}^m ), d_p) $
and
$(F_{B}   (  \mathbb{R}^m )^p,    d_p) $
have parallel characterizations
of
totally bounded sets,
 relatively compact sets, and compact sets
to
that of them.
Finally,
as applications of our results, we consider properties of
$d_p$ metric
and
 relook characterizations of compactness proposed in previous work.

The remainder part of this paper is organized as follows.
Since $d_p$ metric is based on the well-known Hausdorff metric,
Section 2
 introduces
and
discusses
some properties of Hausdorff metric.
In
Section 3, we recall and introduce some concepts and results of fuzzy sets related to our paper.
Then,
in Section 4,
we present characterizations of relatively compact sets, totally bounded sets and compact sets
in
$(F_{B}   (  \mathbb{R}^m )^p ,      d_p)$.
Section 5
shows
that
 $F_{B}   (  \mathbb{R}^m )^p $
 is
 in fact the completion
 of
 $F_{B}   (  \mathbb{R}^m ) $ according to $d_p$ metric.
Then we
give
 characterizations of relatively compact sets, totally bounded sets and compact sets
in
$(F_{B}   (  \mathbb{R}^m ) ,      d_p)$.
Based on the conclusions in Sections 4 and 5 and some discussions on convexity and star-shapedness of fuzzy sets, in Section 6,
we
show that
$(S_0^{m,p}, d_p)$,
$(E^{m,p}, d_p)$, $(S^{m,p}, d_p)$ and $(\widetilde{S}^{m,p}, d_p)$
are
exactly
the completions
of
$(S_0^{m}, d_p)$,
$(E^m, d_p)$, $(S^m, d_p)$ and $(\widetilde{S}^m, d_p)$,
respectively.
We
clarify
relation among the ten fuzzy sets spaces discussed in this paper.
As consequences of preceding results,
it
follows characterizations of totally bounded sets, relatively compact sets and compact sets
in these spaces.
  Section 7 gives some applications of the results in our paper.
At last, we draw conclusions in Section 8.

\section{The Hausdorff metric}

Let $\mathbb{N}$ be the set of all natural numbers,
$\mathbb{Q}$
 be the set of all rational numbers,
$\mathbb{R}^m$
be $m$-dimensional Euclidean space,
$K_C(\mathbb{R}^m)$
be
the set of
all the nonempty compact and convex sets in $\mathbb{R}^m$,
$K(  \mathbb{R}^m  )$
be
the
set of all nonempty compact set in $\mathbb{R}^m$,
and
$C(  \mathbb{R}^m  )$
be
the
set of all nonempty closed set in $\mathbb{R}^m$.
The
 well-known
 {\rm Hausdorff} metric $H$ on
   $C(\mathbb{R}^m)$ is defined by:
$$H(U,V)=\max\{H^{*}(U,V),\ H^{*}(V,U)\}$$
for arbitrary $U,V\in C(\mathbb{R}^m)$, where
  $$H^{*}(U,V)=\sup\limits_{u\in U}\,d\, (u,V) =\sup\limits_{u\in U}\inf\limits_{v\in
V}d\, (u,v).$$

\begin{pp} \cite{da} \label{kcs}
 $(C(\mathbb{R}^m), H)$ is a complete metric space in which $K(\mathbb{R}^m)$ and $K_C(  \mathbb{R}^m  )$
are closed subsets.
Hence, $K(\mathbb{R}^m)$ and $K_C(\mathbb{R}^m)$ are also complete metric spaces.
\end{pp}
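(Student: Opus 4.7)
The plan is to prove the statement in four steps: verify $H$ is a metric (likely standard and suppressed), establish completeness of $(C(\mathbb{R}^m), H)$, verify that $K(\mathbb{R}^m)$ and $K_C(\mathbb{R}^m)$ are closed in it, and then conclude completeness of the two subspaces from the general fact that a closed subset of a complete metric space is complete.

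For completeness of $(C(\mathbb{R}^m), H)$, given a Cauchy sequence $\{U_n\}$, I would introduce the candidate limit
$$U \;=\; \bigcap_{n=1}^{\infty} \overline{\bigcup_{k \geq n} U_k},$$
which is equivalently the set of all points $x \in \mathbb{R}^m$ arising as limits of subsequences $x_{n_k} \in U_{n_k}$. This $U$ is closed by construction. To show it is nonempty, I would use the Cauchy property to extract a subsequence $U_{n_k}$ with $H(U_{n_k}, U_{n_{k+1}}) < 2^{-k}$ and then, starting from any $x_{n_1} \in U_{n_1}$, inductively pick $x_{n_{k+1}} \in U_{n_{k+1}}$ with $d(x_{n_k}, x_{n_{k+1}}) < 2^{-k}$; the sequence is Cauchy in $\mathbb{R}^m$ and its limit lies in $U$. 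To show $H(U_n, U) \to 0$, given $\varepsilon > 0$ pick $N$ with $H(U_n, U_m) < \varepsilon$ for $n, m \geq N$; then $H^{*}(U, U_n) < \varepsilon$ follows directly from the definition of $U$, while for $H^{*}(U_n, U) < \varepsilon$ I would, for each $u \in U_n$, inductively choose $u_k \in U_{n_k}$ along a fast-converging subsequence so that $\sum d(u_k, u_{k+1})$ is small and the resulting limit point of $U$ is within $\varepsilon$ of $u$.

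For closedness, suppose $U_n \in K(\mathbb{R}^m)$ and $U_n \to U$ in $H$. Picking $N$ with $H(U_N, U) < 1$, I get $U \subset U_N + \overline{B(0,1)}$, which is bounded; together with the closedness of $U$ in $\mathbb{R}^m$ this yields $U \in K(\mathbb{R}^m)$. For $K_C(\mathbb{R}^m)$ I need only verify convexity of the limit: given $x, y \in U$ and $t \in [0,1]$, choose $x_n, y_n \in U_n$ with $d(x, x_n), d(y, y_n) \leq H^{*}(U, U_n) \to 0$; convexity of $U_n$ gives $tx_n + (1-t)y_n \in U_n$, and since $H^{*}(U_n, U) \to 0$ there exist points of $U$ within vanishing distance, whose limit is $tx + (1-t)y$; closedness of $U$ puts this point in $U$.

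I expect the main obstacle to be the direction $H^{*}(U_n, U) \to 0$ in the completeness argument, since producing, for an arbitrary $u \in U_n$, a \emph{single} point of the intersection $U$ that is close to $u$ requires the diagonal construction described above and cannot be read off directly from the definition of $U$. Everything else reduces to routine manipulation of the defining inequalities of $H^{*}$ and to elementary facts about $\mathbb{R}^m$.
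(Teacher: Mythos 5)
This proposition is quoted from Diamond--Kloeden \cite{da}; the paper supplies no proof of its own, so there is nothing internal to compare against. Your argument is the standard one for this result and is correct: the Kuratowski upper limit $U=\bigcap_{n}\overline{\bigcup_{k\ge n}U_k}$ as candidate limit, nonemptiness and the estimate $H^{*}(U_n,U)\le\varepsilon$ via a telescoping choice of points along a fast Cauchy subsequence, $H^{*}(U,U_n)\le\varepsilon$ directly from the description of $U$ as the set of subsequential limits, and then boundedness (resp.\ convexity) passing to the $H$-limit to get closedness of $K(\mathbb{R}^m)$ (resp.\ $K_C(\mathbb{R}^m)$), with completeness of the closed subspaces following from the general metric-space fact. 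The only cosmetic point worth noting is that on all of $C(\mathbb{R}^m)$ the Hausdorff distance can take the value $+\infty$ for unbounded sets, so $H$ is strictly an extended metric there; this does not affect any step of your argument, since a Cauchy sequence eventually has finite pairwise distances.
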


\begin{pp}\cite{da, roman} \label{sca}
A
nonempty subset $U$ of $(   K(\mathbb{R}^m),    H     )$
is
compact
if and only if
it is closed and bounded in $(   K(\mathbb{R}^m),    H     )$.
\end{pp}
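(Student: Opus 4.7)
The plan is to establish the non-trivial direction (closed and bounded implies compact) by reducing to a total-boundedness argument inside a compact ambient ball, while the forward direction is the routine metric-space fact that every compact subset of a metric space is closed and bounded.

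For the forward direction, I would simply note that in any metric space, a compact subset is closed, and boundedness follows from covering the compact set by a single ball of finite radius extracted from any open cover by balls of fixed radius.

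For the converse, I would first invoke Proposition \ref{kcs}: since $K(\mathbb{R}^m)$ is a closed subspace of the complete space $(C(\mathbb{R}^m), H)$, it is itself complete, and hence any closed subset of it is complete as well. Therefore it suffices to prove that a bounded $U \subseteq K(\mathbb{R}^m)$ is totally bounded. Boundedness of $U$ means there exists $M>0$ with $H(A,\{0\}) \le M$ for every $A \in U$, which translates to $A \subseteq \overline{B_M}$ for all $A \in U$, where $\overline{B_M}$ is the closed ball of radius $M$ in $\mathbb{R}^m$. Thus $U$ sits inside $K(\overline{B_M})$, the hyperspace of nonempty compact subsets of a compact set.

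The core step is then to show that $K(\overline{B_M})$ itself is totally bounded in the Hausdorff metric. Given $\varepsilon>0$, I would choose a finite $\varepsilon$-net $\{x_1,\dots,x_N\}$ in $\overline{B_M}$ (available by compactness of $\overline{B_M}$), and observe that the finite family $\mathcal{F}$ of all nonempty subsets of $\{x_1,\dots,x_N\}$ is a finite $\varepsilon$-net for $K(\overline{B_M})$: for any $A \in K(\overline{B_M})$, one lets $F_A := \{x_i : d(x_i, A) < \varepsilon\}$ (after discarding indices that contribute nothing), and a direct check gives $H(A, F_A) < \varepsilon$ using that each $x_i \in F_A$ is within $\varepsilon$ of some point of $A$ and, conversely, every point of $A$ is within $\varepsilon$ of some $x_i$ that in turn lies in $F_A$.

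With total boundedness of $K(\overline{B_M})$ in hand, the conclusion is immediate: $K(\overline{B_M})$ is complete (closed in the complete space $K(\mathbb{R}^m)$) and totally bounded, hence compact; and $U$, being closed in $K(\mathbb{R}^m)$ and therefore closed in $K(\overline{B_M})$, is a closed subset of a compact space, so $U$ is compact. The only mildly delicate point is the construction of the finite $\varepsilon$-net $\mathcal{F}$ and the verification of the two-sided Hausdorff estimate, but this is standard and presents no real obstacle.
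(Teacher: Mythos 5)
Your argument is correct. Note that the paper does not prove this statement at all --- it is quoted as a known result from the cited references --- so there is no internal proof to compare against; your write-up supplies the standard argument (essentially the Blaschke selection theorem): reduce to completeness of $K(\mathbb{R}^m)$ via Proposition \ref{kcs}, observe that boundedness places $U$ inside $K(\overline{B_M})$, and show the hyperspace of a totally bounded set is totally bounded by taking as an $\varepsilon$-net the nonempty subsets of a finite $\varepsilon$-net of $\overline{B_M}$. The only points worth tightening are cosmetic: use an $\varepsilon/2$-net of $\overline{B_M}$ so the two-sided Hausdorff estimate comes out strictly below $\varepsilon$ without fuss, and note explicitly that $F_A\neq\emptyset$ because $A\neq\emptyset$ and the net covers $\overline{B_M}$; also, once $U$ is known to be complete (closed in a complete space) and totally bounded (subset of a totally bounded set), you can conclude compactness of $U$ directly without first establishing compactness of $K(\overline{B_M})$.
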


\begin{pp}\cite{da} \label{mce}
  Let $\{u_n\} \subset K(\mathbb{R}^m)$ satisfy
$ u_1 \supseteq u_2 \supseteq \ldots \supseteq u_n \supseteq \ldots.  $
Then
$u= \bigcap_{n=1}^{+\infty}  u_n \in K(\mathbb{R}^m)$ and
$H(u_n, u) \to 0  \ \hbox{as} \ n\to \infty$.

On the other hand, if $ u_1 \subseteq u_2 \subseteq \ldots \subseteq u_n \subseteq \ldots  $
and
$u= \overline{\bigcup_{n=1}^{+\infty}  u_n }\in K(\mathbb{R}^m)$,
then
$H(u_n, u) \to 0  \ \hbox{as} \ n\to \infty$.
\end{pp}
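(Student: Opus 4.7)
The plan is to handle the decreasing and increasing cases separately, in each case splitting the Hausdorff distance into the two directed parts $H^*(u_n,u)$ and $H^*(u,u_n)$ and showing that each tends to zero.

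For the decreasing statement, first I would show $u=\bigcap_{n} u_n\in K(\mathbb{R}^m)$. Nonemptiness follows from the finite intersection property: each $u_n$ is a nonempty compact subset of $u_1$, and any finite subfamily has nonempty intersection by nestedness, so $u\neq\emptyset$; being a closed subset of the compact $u_1$, it is itself compact. Since $u\subseteq u_n$ for all $n$, one gets $H^*(u,u_n)=0$ immediately, so only $H^*(u_n,u)\to 0$ has to be proved. I would argue by contradiction: if the convergence fails, there exist $\varepsilon>0$, a subsequence $u_{n_k}$, and points $x_k\in u_{n_k}$ with $d(x_k,u)\geq\varepsilon$. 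All $x_k$ lie in the compact $u_1$, so a sub-subsequence converges to some $x\in\mathbb{R}^m$. For any fixed $N$, eventually $n_k\geq N$ and then $x_k\in u_N$ by nestedness; closedness of $u_N$ forces $x\in u_N$. This holds for every $N$, hence $x\in u$, contradicting $d(x_k,u)\geq\varepsilon$.

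For the increasing statement, $u\in K(\mathbb{R}^m)$ is given. Since $u_n\subseteq u$, we have $H^*(u_n,u)=0$ and only $H^*(u,u_n)\to 0$ needs proof. Given $\varepsilon>0$, compactness of $u$ together with the density of $\bigcup_n u_n$ in $u$ (immediate from $u=\overline{\bigcup_n u_n}$) allow me to cover $u$ by finitely many balls of radius $\varepsilon$ whose centers lie in $\bigcup_n u_n$. Each such center belongs to some $u_{n_i}$; taking $N$ to be the maximum of these finitely many indices, monotonicity places every center in $u_n$ for all $n\geq N$, so every point of $u$ is within $\varepsilon$ of $u_n$ and $H^*(u,u_n)\leq\varepsilon$.

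I do not anticipate a serious obstacle. The subtlest point is in the decreasing case, where one must observe that the candidate limit points $x_k$ all live in the compact set $u_1$ in order to extract a convergent subsequence, and then apply closedness of each individual $u_N$; once the statement is decomposed into the two directed distances everything else is routine.
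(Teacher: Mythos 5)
Your proof is correct. The paper itself offers no proof of this statement --- it is quoted as a known result from Diamond and Kloeden's book --- so there is nothing internal to compare against; your argument (finite intersection property plus closedness for $u\in K(\mathbb{R}^m)$, the trivial directed distance from the inclusion $u\subseteq u_n$ or $u_n\subseteq u$, a compactness/contradiction argument for $H^{*}(u_n,u)$ in the decreasing case, and a finite $\varepsilon$-net of centers drawn from the dense union in the increasing case) is the standard one and each step is sound.
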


A set $K\in K(\mathbb{R}^m)$ is said to be star-shaped relative to a point $x\in K$
if
for each $y\in K$, the line $\overline{xy}$ joining $x$ to $y$
is contained in $K$.
The kernel ker\;$K$ of $K$ is the set of all points $x\in K$
such that
$\overline{xy}\subset K$
for each $y\in K$.
The symbol $K_S(\mathbb{R}^m)$
is used to
denote
all the star-shaped sets in $\mathbb{R}^m$.

Obviously,
$K_C(\mathbb{R}^m)\subsetneq K_S(\mathbb{R}^m)$.
It can be checked that
$\hbox{ker}\; K \in K_C(\mathbb{R}^m) $ for all $K\in K_S(\mathbb{R}^m)$.

We say that a sequence of sets $\{C_n\}$ converges to
$C$, in the sense of Kuratowski, if
$$
C
=
\liminf_{n\rightarrow \infty} C_{n}
=
\limsup_{n\rightarrow \infty} C_{n},
$$
where
\begin{gather*}
\liminf_{n\rightarrow \infty} C_{n}
 =
 \{x\in X: \  x=\lim\limits_{n\rightarrow \infty}x_{n},    x_{n}\in C_{n}\},
\\
\limsup_{n\rightarrow \infty} C_{n}
=
\{
 x\in X : \
 x=\lim\limits_{j\rightarrow \infty}x_{n_{j}},x_{n_{j}}\in C_{n_j}
\}
 =
 \bigcap\limits_{n=1}^{\infty}   \overline{   \bigcup\limits_{m\geq n}C_{m}    }.
\end{gather*}
In this case, we'll write simply
$C=\lim_{n\to\infty}C_n  (K)$.

The following two known propositions discuss the relation of the convergence induced by Hausdorff metric
and
the convergence
 in the sense of Kuratowski. The readers can see \cite{greco2} for details.

\begin{pp} \label{hms}
  Suppose that $u$, $u_n$, $n=1,2,\ldots$, are nonempty compact sets
  in $\mathbb{R}^m$.
  Then
  $H(u_n, u) \to 0$ as $n\to \infty$
  implies that
    $u=\lim_{n\to\infty}u_n  (K)$.
\end{pp}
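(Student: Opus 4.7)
The plan is to establish the two inclusions $u \subseteq \liminf_{n\to\infty} u_n$ and $\limsup_{n\to\infty} u_n \subseteq u$, since the reverse inclusion $\liminf \subseteq \limsup$ holds automatically from the definitions. Together these force $\liminf = \limsup = u$, which is exactly the definition of Kuratowski convergence to $u$. The hypothesis $H(u_n,u)\to 0$ will be used in its two one-sided components $H^*(u_n,u)\to 0$ and $H^*(u,u_n)\to 0$, each handling one of the inclusions.

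For $u \subseteq \liminf_{n\to\infty} u_n$, I would fix an arbitrary $x\in u$ and construct an approximating sequence $x_n\in u_n$. Because each $u_n$ is compact, the distance $d(x,u_n)$ is attained, so there exists $x_n\in u_n$ with $d(x,x_n)=d(x,u_n)\le H^*(u,u_n)$. Since $H^*(u,u_n)\le H(u_n,u)\to 0$, we get $x_n\to x$, placing $x$ in $\liminf_{n\to\infty}u_n$.

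For $\limsup_{n\to\infty} u_n \subseteq u$, I would take $x\in\limsup_{n\to\infty}u_n$, so there is a subsequence $x_{n_j}\in u_{n_j}$ with $x_{n_j}\to x$. By compactness of $u$, pick $y_{n_j}\in u$ with $d(x_{n_j},y_{n_j})=d(x_{n_j},u)\le H^*(u_{n_j},u)\to 0$. Then the triangle inequality gives $d(x,y_{n_j})\le d(x,x_{n_j})+d(x_{n_j},y_{n_j})\to 0$, so $y_{n_j}\to x$, and since $u$ is closed we conclude $x\in u$.

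There is no real obstacle here; the argument is essentially a bookkeeping exercise that unpacks the two halves of the Hausdorff distance and matches each to one half of the Kuratowski limit. The only minor subtlety is ensuring that the approximating points $x_n$ and $y_{n_j}$ actually exist in the respective compact sets, which is guaranteed by attainment of $d(\cdot,\text{compact set})$. I would keep the write-up short, quoting Proposition \ref{kcs} only implicitly (for closedness of $u$), and presenting the two inclusions as separate paragraphs for clarity.
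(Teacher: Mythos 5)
Your argument is correct: the two one-sided Hausdorff excesses cleanly yield the two Kuratowski inclusions, and the chain $u\subseteq\liminf_{n}u_n\subseteq\limsup_{n}u_n\subseteq u$ closes the proof. Note that the paper itself supplies no proof of this proposition --- it is stated as a known result with a reference to the literature --- so there is nothing to compare against; your write-up is the standard argument and would serve as a self-contained replacement for the citation. One trivial remark: closedness of $u$ follows directly from the hypothesis that $u$ is compact, so you do not even need to invoke Proposition~\ref{kcs} implicitly.
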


\begin{pp}  \label{klhe}
  Suppose that $u$, $u_n$, $n=1,2,\ldots$, are nonempty compact sets
  in $\mathbb{R}^m$
  and
  that $u_n$, $n=1,2,\ldots$, are connected sets.
  If
    $u=\lim_{n\to\infty}u_n (K)$,
  then
   $H(u_n, u) \to 0$ as $n\to \infty$.
\end{pp}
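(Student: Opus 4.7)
The plan is to show separately that $H^{*}(u,u_n)\to 0$ and $H^{*}(u_n,u)\to 0$, and to use connectedness only in the second estimate. Recall that Kuratowski convergence gives both $u\subseteq\liminf_{n}u_n$ and $\limsup_{n}u_n\subseteq u$; the first inclusion will handle the easier direction, while the second, combined with the connectedness of each $u_n$, will handle the harder one.

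For the easy direction $H^{*}(u,u_n)\to 0$, I would argue by contradiction: if not, there exist $\varepsilon>0$, a subsequence (still denoted $u_n$), and points $x_n\in u$ with $d(x_n,u_n)\geq\varepsilon$. By compactness of $u$, a further subsequence satisfies $x_n\to x^{*}\in u$. Since $u\subseteq\liminf u_n$, pick $y_n\in u_n$ with $y_n\to x^{*}$; then $d(x_n,u_n)\leq |x_n-y_n|\leq |x_n-x^{*}|+|x^{*}-y_n|\to 0$, contradicting $d(x_n,u_n)\geq\varepsilon$. Notice no connectedness is used here.

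For the hard direction $H^{*}(u_n,u)\to 0$, suppose for contradiction that there is $\varepsilon>0$ and, along a subsequence, points $y_n\in u_n$ with $d(y_n,u)\geq\varepsilon$. Fix any $x_0\in u$ (which is nonempty since $u_n$ is nonempty and Kuratowski limits of nonempty sets are handled in the hypothesis). By $u\subseteq\liminf u_n$, there exist $x_n\in u_n$ with $x_n\to x_0$, so $d(x_n,u)<\varepsilon/2$ for all large $n$. Consider the continuous function $\varphi_n(z):=d(z,u)$ on the connected set $u_n$. Since $\varphi_n(x_n)<\varepsilon/2$ and $\varphi_n(y_n)\geq\varepsilon$, the intermediate value theorem yields $z_n\in u_n$ with $d(z_n,u)=\varepsilon/2$. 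In particular each $z_n$ lies in the closed $\varepsilon/2$-neighborhood of $u$, which is compact, so along a further subsequence $z_n\to z^{*}$ with $d(z^{*},u)=\varepsilon/2>0$, hence $z^{*}\notin u$. But $z^{*}\in\limsup u_n\subseteq u$ by Kuratowski convergence, a contradiction.

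The main obstacle is exactly this second step: without connectedness, $u_n$ could carry a far-away ``island'' that escapes detection by $\limsup$ (for instance a single point drifting along a divergent path would still have $\limsup=u$ if that point has no accumulation point, but it could in fact accumulate at a point outside $u$ in general; connectedness rules out both possibilities via the IVT trick that forces an accumulation point at distance exactly $\varepsilon/2$ from $u$). The continuity of $z\mapsto d(z,u)$ together with the connectedness of $u_n$ is precisely what lets us trap an accumulation point inside a compact neighborhood of $u$, producing the contradiction with $\limsup u_n\subseteq u$.
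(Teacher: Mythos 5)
Your proof is correct. The paper does not actually prove Proposition \ref{klhe}; it states it as a known result and refers the reader to the Greco--Moschen--Quelho Frota Rezende reference, so there is no in-paper argument to compare against. Your argument is the standard self-contained one: the inclusion $u\subseteq\liminf_n u_n$ gives $H^{*}(u,u_n)\to 0$ with no connectedness needed, and for $H^{*}(u_n,u)\to 0$ the intermediate value theorem applied to the continuous function $z\mapsto d(z,u)$ on the connected set $u_n$ traps a point $z_n$ at distance exactly $\varepsilon/2$ from $u$ inside a compact neighborhood, whose accumulation point then violates $\limsup_n u_n\subseteq u$. The only points worth noting are routine: when you pass to subsequences you implicitly use that $\liminf$ only grows and $\limsup$ only shrinks under passage to subsequences (both true, but worth a word), and the nonemptiness of $u$ is already a hypothesis of the proposition, so the parenthetical justification there is unnecessary.
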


\begin{tm} \label{ksc}
$K_S(\mathbb{R}^m)$ is a closed set in $( K(\mathbb{R}^m),   H )$.
\end{tm}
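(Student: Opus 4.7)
The plan is to show sequential closure: if $\{u_n\} \subset K_S(\mathbb{R}^m)$ satisfies $H(u_n, u) \to 0$ for some $u \in K(\mathbb{R}^m)$, then $u \in K_S(\mathbb{R}^m)$, i.e., $u$ has a non-empty kernel.

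First, since each $u_n$ is star-shaped, I can select $x_n \in \ker(u_n) \subset u_n$ for every $n$. Because $\{u_n\}$ is Hausdorff-convergent to $u$, the union $\bigcup_n u_n \cup u$ is bounded in $\mathbb{R}^m$, so $\{x_n\}$ admits a convergent subsequence $x_{n_k} \to x$. Proposition \ref{hms} gives $u = \lim_{n\to\infty} u_n$ in the Kuratowski sense, so from $x_{n_k} \in u_{n_k}$ and $x_{n_k} \to x$ I conclude $x \in \liminf u_n \subseteq u$. Thus $x$ is a candidate kernel point of $u$.

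Next I must prove that $\overline{xy} \subset u$ for every $y \in u$. Fix such a $y$. Since $u = \liminf_{n\to\infty} u_n$, there exists a sequence $y_n \in u_n$ with $y_n \to y$; in particular, $y_{n_k} \to y$ along the same subsequence used above. For any $t \in [0,1]$, set $z_{n_k}(t) := t x_{n_k} + (1-t) y_{n_k}$. Because $x_{n_k} \in \ker(u_{n_k})$ and $y_{n_k} \in u_{n_k}$, the star-shapedness of $u_{n_k}$ guarantees $z_{n_k}(t) \in u_{n_k}$. Clearly $z_{n_k}(t) \to tx + (1-t)y =: z(t)$. Applying again the Kuratowski-limit characterization, $z(t) \in \liminf_{n\to\infty} u_n \subseteq u$. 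Since $t \in [0,1]$ was arbitrary, $\overline{xy} \subset u$. This holds for every $y \in u$, so $u$ is star-shaped relative to $x$ and $u \in K_S(\mathbb{R}^m)$.

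I expect no serious obstacle here; the argument is a standard extraction-and-limit diagonalization. The one point that deserves care is the \emph{uniform} choice of a single kernel point $x$ that works for every $y \in u$: it is important that $x$ is produced first (from the bounded sequence of kernel points), and only afterwards is $y \in u$ chosen and approximated by $y_{n_k} \in u_{n_k}$ along the \emph{same} subsequence $\{n_k\}$ on which $x_{n_k} \to x$. Once that ordering is fixed, Propositions \ref{kcs} and \ref{hms} supply everything needed to conclude.
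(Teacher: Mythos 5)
Your proof is correct and follows essentially the same route as the paper's: extract a convergent subsequence of kernel points $x_{n_k}\in \mbox{ker}\, u_{n_k}$, identify its limit as a point of $u$ via the Kuratowski convergence supplied by Proposition \ref{hms}, and then pass the segments $t x_{n_k}+(1-t)y_{n_k}$ to the limit along that same subsequence. The only cosmetic slip is that a subsequential limit of points $x_{n_k}\in u_{n_k}$ belongs by definition to $\limsup_{n\to\infty} u_n$ rather than to $\liminf_{n\to\infty} u_n$, but since both sets equal $u$ under Kuratowski convergence the conclusion is unaffected.
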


\begin{proof} \ Suppose that $\{ u_n \} \subset K_S(\mathbb{R}^m) $,  $u\in K(\mathbb{R}^m)$
and
 $H(u_n, u) \to 0$ as $n\to \infty$.
In the following,
we will prove that $u\in K_S(\mathbb{R}^m)$.

Choose $x_n\in {\rm Ker}\; u_n$, $n=1,2,\ldots$,
then there exists an $N$ such that
$x_n \in U$   for all $n\geq N$, where $U:=\{y:  \ d(y,u)\leq 1 \}$.
Note that $U$ is a compact set,
we
know
that
there is a subsequence $\{x_{n_i}\}$ of $\{x_n\}$ such that
$\lim_{i\to \infty }  x_{n_i} = x_0  $.
So
$x_0 \in \limsup_{n\to \infty} u_n$,
it then follows from Proposition \ref{hms}
that
$x_0\in u$.

Now, we show that $u$ is star-shaped  and    $x_0\in {\rm ker}\; u $.
It suffices to show that
$$\lambda x_0 +  (1-\lambda) z \in u$$
for all $z\in u$ and $\lambda\in [0,1]$.
In fact,
given $z\in u$, since $u=\liminf_{n\to \infty} u_n$,
there is a sequence $\{z_n: z_n\in u_n\}$ such that
$\lim_{n\to \infty}  z_n =z $.
Hence, for each $\lambda\in [0,1]$,
$$
\lambda x_0 +  (1-\lambda) z = \lim_{i\to \infty} \lambda x_{n_i} + (1-\lambda) z_{n_i}  \in \limsup_{n\to \infty} u_{n},
  $$
and thus, by Proposition \ref{hms}, $\lambda x_0 +  (1-\lambda) z \in u$. 
\end{proof}

\begin{tl} \label{kef}
  Let $u$, $u_n$ be star-shaped sets, $n=1,2,\ldots$.
 If $H(u_n, u) \to 0$,
then
$\limsup_{n\to \infty}\mbox{ker}\; u_n \subset \mbox{ker}\; u$.
\end{tl}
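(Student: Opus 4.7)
The strategy is to mirror the argument used in Theorem \ref{ksc}, but starting from an arbitrary point of $\limsup_{n\to\infty}\ker u_n$ rather than an arbitrarily chosen sequence of kernel points. The key inputs are Proposition \ref{hms}, which converts the Hausdorff convergence $H(u_n,u)\to 0$ into the Kuratowski convergence $u=\lim_{n\to\infty}u_n(K)$, together with the stability of kernel membership under taking limits of convex combinations.

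First I would fix an arbitrary $x_0\in\limsup_{n\to\infty}\ker u_n$. By the definition of Kuratowski upper limit, there exist a subsequence $\{n_j\}$ and points $x_{n_j}\in\ker u_{n_j}$ with $x_{n_j}\to x_0$. Since $\ker u_{n_j}\subset u_{n_j}$, the point $x_0$ sits in $\limsup_{n\to\infty}u_n$, and Proposition \ref{hms} (applied along the full sequence, noting that the limsup along $\{n_j\}$ is contained in the limsup along $\{n\}$) gives $x_0\in u$.

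Next I would verify the kernel condition, namely that $\lambda x_0+(1-\lambda)z\in u$ for every $z\in u$ and $\lambda\in[0,1]$. Given such a $z$, Proposition \ref{hms} together with $u=\liminf_{n\to\infty}u_n$ produces a sequence $z_n\in u_n$ with $z_n\to z$; in particular, $z_{n_j}\to z$ along the selected subsequence. Because each $x_{n_j}$ lies in $\ker u_{n_j}$ and $z_{n_j}\in u_{n_j}$, the convex combination $\lambda x_{n_j}+(1-\lambda)z_{n_j}$ belongs to $u_{n_j}$. Passing to the limit in $j$, this sequence converges to $\lambda x_0+(1-\lambda)z$, which therefore lies in $\limsup_{n\to\infty}u_n$ and, using Proposition \ref{hms} once more, lies in $u$. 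Hence $x_0\in\ker u$.

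There is no real obstacle here; the proof is essentially a bookkeeping exercise that rearranges the second half of the proof of Theorem \ref{ksc}. The only point needing a little care is the handling of the subsequence: the points $x_{n_j}$ are only given along $\{n_j\}$, so the convex-combination argument must be carried out on $u_{n_j}$ and the limit identified inside the $\limsup$ of the full sequence, rather than attempting to produce companions $x_n\in\ker u_n$ for every $n$.
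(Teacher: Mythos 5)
Your proof is correct and follows essentially the same route as the paper, which simply cites the proof of Theorem \ref{ksc}: you take a point $x_0\in\limsup_{n\to\infty}\mbox{ker}\;u_n$, extract the convergent subsequence of kernel points, and rerun the convex-combination argument via Proposition \ref{hms}. Your explicit handling of the subsequence is exactly the bookkeeping the paper leaves implicit.
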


\begin{proof}
\ From the proof of Theorem \ref{ksc}, we get the desired results.
\end{proof}

\begin{re}
{\rm
  We do not know whether Theorem \ref{ksc} and Corollary \ref{kef} are known conclusions,
so
we give our proofs here.
}
\end{re}

\section{The spaces of fuzzy sets} \label{fsp}

In
this section,
we
recall and introduce
various
spaces of fuzzy sets including
fuzzy numbers space, fuzzy star-shaped numbers space and
general fuzzy star-shaped numbers space.
Some basic properties of these spaces are discussed.

We use $F(\mathbb{R}^m)$ to represent all
fuzzy subsets on $\mathbb{R}^m$, i.e. functions from $\mathbb{R}^m$
to $[0,1]$.
For details, we refer the readers to references
\cite{wu, da}.
$
\mathbf{2}^{\mathbb{R}^m }:=  \{  S: S\subseteq \mathbb{R}^m      \}
$
can be embedded in $F (\mathbb{R}^m)$, as any $S \subset \mathbb{R}^m$ can be
seen as its characteristic function, i.e. the fuzzy set
\[
\widehat{S}(x)=\left\{
\begin{array}{ll}
1,x\in S, \\
0,x\notin S.
\end{array}
\right.
\]
For
$u\in F(\mathbb{R}^m)$, let $[u]_{\al}$ denote the $\al$-cut of
$u$, i.e.
\[
[u]_{\al}=\begin{cases}
\{x\in \mathbb{R}^m : u(x)\geq \al \}, & \ \al\in(0,1],
\\
{\rm supp}\, u=\overline{\{x \in \mathbb{R}^m: u(x)>0\}}, & \ \al=0.
\end{cases}
\]
For
$u\in F(\mathbb{R}^m)$,
we suppose that
\\
(\romannumeral1) \ $u$ is normal: there exists at least one $x_{0}\in \mathbb{R}^m$
with $u(x_{0})=1$;
\\
(\romannumeral2) \ $u$ is upper semi-continuous;
\\
(\romannumeral3-1) \ $u$ is fuzzy convex: $u(\la x+(1-\la)y)\geq {\rm min} \{u(x),u(y)\}$
for $x,y \in \mathbb{R}^m$ and $\la \in [0,1];$
\\
(\romannumeral3-2$^0$) \ $u$ is fuzzy star-shaped with respect to the origin, i.e.,
$u(\lambda y )  \geq  u(y)$
for all $y\in \mathbb{R}^m$ and $\lambda\in [0,1]$.
\\
(\romannumeral3-2) \ $u$ is fuzzy star-shaped, i.e.,   there exists $x\in \mathbb{R}^m$
such that
$u$ is fuzzy star-shaped with respect to $x$, namely,
$u(\lambda y + (1-\lambda) x)  \geq  u(y)$
for all $y\in \mathbb{R}^m$ and $\lambda\in [0,1]$;
\\
(\romannumeral3-3) \ Given $\alpha\in (0,1]$, then there exists
$x_\alpha \in [u]_\alpha$
such that
$\overline{x_\alpha\, y} \in [u]_\alpha$
for all $y\in [u]_\alpha$;
\\
(\romannumeral4-1) \ $[u]_0$ is a bounded set in $\mathbb{R}^m$;
\\
(\romannumeral4-2) \ $\left (\int_0^1   H( [u]_\al, \{0\}   )^p  \; d\alpha   \right)^{1/p}  <   +\infty $,
where $p\geq 1$ and $0$ denotes the origin of $\mathbb{R}^m$;
\\
(\romannumeral4-3) \  $[u]_\al$ is a bounded set in $\mathbb{R}^m$ when $\al>0$.
\begin{itemize}
  \item
  If
     $u$ satisfies (\romannumeral1), (\romannumeral2), (\romannumeral3-1) and (\romannumeral4-1),
  then
   $u$ is a fuzzy number. The set of all fuzzy numbers is denoted by $E^m$.

 \item
  If $u$ satisfies (\romannumeral1), (\romannumeral2), (\romannumeral3-2$^0$) and (\romannumeral4-1),
  then
  we say $u$ is a fuzzy star-shaped number with respect to the   origin.
The
set of all fuzzy star-shaped  numbers with respect to the origin is denoted by
  $S_0^m$.

  \item
  If $u$ satisfies (\romannumeral1), (\romannumeral2), (\romannumeral3-2) and (\romannumeral4-1),
  then
  we say $u$ is a fuzzy  star-shaped number. The set of all fuzzy star-shaped  numbers is denoted by
  $S^m$.

\item
If $u$ satisfies (\romannumeral1), (\romannumeral2), (\romannumeral3-3) and (\romannumeral4-1),
  then
  we say $u$ is a  general fuzzy star-shaped number. The set of all  general fuzzy  star-shaped  numbers is denoted by
  $\widetilde{S}^m$.
\end{itemize}
The definitions of fuzzy star-shaped numbers and general fuzzy  star-shaped  numbers were given by Diamond \cite{da}
and Qiu et al. \cite{qiu}, respectively.
$\mathbb R^m$ can be embedded in $E^m$, as any $r \in  \mathbb{R}^m$ can be
viewed as the fuzzy number
\[
\widehat{r}(x)=\left\{
\begin{array}{ll}
1,x=r, \\
0,x\not=r.
\end{array}
\right.
\]
We can see that $E^m \nsubseteq S^m_0$
and
$S^m_0 \nsubseteq E^m$.
If $u\in S^m$, then $\bigcap_{\alpha\in (0,1]}\mbox{ker}\;[u]_\alpha \not= \emptyset$,
however
this inequality may not hold when $u\in \widetilde{S}^m$.
Clearly $S^m_0= \{u\in S^m: \ 0\in \bigcap_{\alpha\in (0,1]}\mbox{ker}\;[u]_\alpha\}$.
So
$E^m, S^m_0   \subsetneq   S^m   \subsetneq    \widetilde{S}^m$.

In order to illustrate and prove the conclusions in this paper,
we need to
introduce $L_p$--type noncompact fuzzy sets.

Let     $u\in  F(\mathbb{R}^m)$.
\begin{itemize}
  \item  If $u$ satisfies (\romannumeral1), (\romannumeral2), (\romannumeral3-1) and (\romannumeral4-2),
then
  we say $u$ is a $L_p$-type noncompact fuzzy number. The collection of all such fuzzy sets is denoted by $E^{m,p}$.

 \item
  If $u$ satisfies (\romannumeral1), (\romannumeral2), (\romannumeral3-2$^0$) and (\romannumeral4-2),
  then
  we say $u$ is a $L_p$-type noncompact fuzzy star-shaped number with respect to the   origin.
The collection of all such fuzzy sets is denoted by
  $S_0^{m,p}$.

  \item  If
  $u$ satisfies (\romannumeral1), (\romannumeral2), (\romannumeral3-2) and (\romannumeral4-2),
then
  we say $u$ is a $L_p$-type noncompact fuzzy  star-shaped number. The collection of all such fuzzy  sets is denoted by $S^{m,p}$.

  \item If
  $u$ satisfies (\romannumeral1), (\romannumeral2), (\romannumeral3-3) and (\romannumeral4-2),
then
  we say $u$ is a $L_p$-type noncompact general fuzzy  star-shaped number. The collection of all such fuzzy sets is denoted by $\widetilde{S}^{m,p}$.
\end{itemize}
It's easy to check that
$
E^m \subsetneq    E^{m,p}
$,
$
  S_0^m \subsetneq    S_0^{m,p}
$,
$
  S^m \subsetneq    S^{m,p}
$ and
$\widetilde{S}^{m} \subsetneq    \widetilde{S}^{m,p}$.

We
can see a kind of $L_p$--type noncompact fuzzy sets is obtained
by using weaker assumption (\romannumeral4-2) to replace
 stronger assumption (\romannumeral4-1) on the corresponding kind of compact fuzzy sets.
So
the latter is a subset of the former.
We call this process \emph{$L_p$-extension}.
A
kind of $L_p$--type noncompact fuzzy sets
is
called \emph{$L_p$-extension} of the corresponding kind of compact fuzzy sets.

Above eight kinds of fuzzy sets have assumptions of convexity  or star-shapedness.
In recent years,
people pay more and more attention to general fuzzy sets from points of view of   theoretical research and real-world applications. For example,
in
the study of fuzzy differential equations \cite{ba,kloeden}, researchers consider fuzzy sets with no assumptions of convexity or star-shapedness.
 For this reason, we wish
 discussions in this paper can be put in a setting of general fuzzy sets which have no assumption of convexity  or star-shapedness.
So
we
introduce
the following kinds of general fuzzy sets.

Suppose    $u\in  F(\mathbb{R}^m)$.
\begin{itemize}
  \item    If $u$ satisfies (\romannumeral1), (\romannumeral2) and (\romannumeral4-1),
then
 $u$ is a normal upper semi-continuous compact-support     fuzzy set on $ \mathbb{R}^m$. The collection of all such fuzzy sets is denoted by $F_{B}(  \mathbb{R}^m )$.

\item     If $u$ satisfies (\romannumeral1), (\romannumeral2) and (\romannumeral4-2),
then
$u$ is a  normal upper semi-continuous
$L_p$-type noncompact-support fuzzy set on $ \mathbb{R}^m$.
The collection of all such fuzzy sets is denoted by $F_{B}   (  \mathbb{R}^m )^p $.

 \item    If $u$ satisfies (\romannumeral1), (\romannumeral2) and (\romannumeral4-3),
then
 $u$ is a normal upper semi-continuous  noncompact-support     fuzzy set on $ \mathbb{R}^m$. The collection of all such fuzzy sets is denoted by $F_{GB}(  \mathbb{R}^m )$.
\end{itemize}
Clearly,
$F_{B}   (  \mathbb{R}^m )^p$
is
the $L_p$-extension of
$F_{B}   (  \mathbb{R}^m )$.
We can check that
\begin{gather*}
   E^{m},  S_0^{m} \subsetneq S^{m} \subsetneq \widetilde{S}^{m} \subsetneq   F_{B}(  \mathbb{R}^m ),
\\
    E^{m,p},  S_0^{m,p} \subsetneq S^{m,p} \subsetneq \widetilde{S}^{m,p} \subsetneq   F_{B}(  \mathbb{R}^m )^p,
\\
F_{B}(  \mathbb{R}^m ) \subsetneq  F_{B}   (  \mathbb{R}^m )^p  \subsetneq  F_{GB}(  \mathbb{R}^m ).
\end{gather*}

Diamond and Kloeden \cite{da} introduced
the
 $d_p$ distance ($1\leq p<\infty$) on $S^m$
which is defined by
\begin{equation}\label{dpm}
d_p\, (u,v)=\left(     \int_0^1 H([u]_\al, [v]_\al) ^p  \; d\alpha  \right)^{1/p}
\end{equation}
for all $u,v\in S^m$.
Note that $u\in F( \mathbb{R}^m)$ satisfies assumption (\romannumeral2) is equivalent
to
$[u]_\al \in C( \mathbb{R}^m)$ for all $\al\in (0,1]$.
So
$d_p$ distance ($1\leq p<\infty$)
can be defined
on
$ F_{GB}(  \mathbb{R}^m ) $.
But $d_p$ distance is not a metric
 on
$F_{GB}   (  \mathbb{R}^m ) $ because $d_p(u,v)$ may equal $+\infty$ for some $u,v \in F_{GB}   (  \mathbb{R}^m ) $.
It's easy to check
that
$d_p$ distance, $p\geq 1$,
is a metric on
$F_{B}   (  \mathbb{R}^m )^p $.
All the fuzzy sets spaces
mentioned
in this paper
are subspaces of
$(F_{B}   (  \mathbb{R}^m )^p,    d_p) $.

A
kind of $L_p$--type noncompact fuzzy sets space endowed with $d_p$ metric
 is called
\emph{$L_p$-extension} of the corresponding compact fuzzy sets space with $d_p$ metric.
So
 fuzzy sets spaces
$(E^{m,p},d_p)$,  $(S_0^{m,p}, d_p )$, $(S^{m,p}, d_p )$, $(\widetilde{S}^{m,p}, d_p)$
and
 $(F_{B}   (  \mathbb{R}^m )^p, d_p) $
are
the
$L_p$-extensions
of
fuzzy sets spaces
$(E^{m},d_p)$, $(S_0^{m}, d_p )$, $(S^{m}, d_p )$, $(\widetilde{S}^{m}, d_p)$
and
$(F_{B}   (  \mathbb{R}^m ), d_p) $,
respectively.

Diamond and Kloeden \cite{da}
pointed out that $(E^m, d_p)$ is not a complete space.
Their analysis
also
indicates
that
the four spaces $(S_0^{m}, d_p )$,
 $(S^m, d_p)$, $( \widetilde{S}^m, d_p   )$ and $(F_{B}   (  \mathbb{R}^m ), d_p) $
are
not
complete.
Kr\"{a}tschmer \cite{kratschmer} has given the completion of $(E^m, d_p)$
which is described by using
 support functions of fuzzy numbers.

In the sequel of
 this paper, we show
that
the
completion of every incomplete fuzzy sets space mentioned in this paper is
exactly its $L_p$-extension , i.e.,
their completions can be obtained
by
means of $L_p$-extension.

\begin{re} \label{act}
{\rm    It can be checked that for each $u\in  F_{B}   (  \mathbb{R}^m )^p   $,
$[u]_\al$ is a compact set when $\alpha\in (0,1]  $,
and
$[u]_0$, the 0-cut,
is
the
only possible unbounded cut-set.
So
we know that $[u]_\al\in K_C( \mathbb{R}^m)$ for all $u\in E^{m,p}$ and $\alpha\in (0,1]$,
and
that
$[u]_\al\in K_S( \mathbb{R}^m)$ for all $u\in \widetilde{S}^{m,p}$ and $\alpha\in (0,1]$.
}
\end{re}

  Denote $\mbox{ker}\; u: =\bigcap_{\alpha \in (0,1]} \mbox{ker}\; [u]_\al$
for
 $u\in \widetilde{S}^{m,p}$ (also see \cite{da, qiu}).
It is easy to check that,
given $u\in \widetilde{S}^{m,p}$,
then
 $u\in S^{m,p}$ if and only if $\mbox{ker}\; u \not= \emptyset$.

The following representation theorem is used
widely in the theory of fuzzy numbers.

\begin{pp}\cite{nr} \label{nr}\
Given $u\in E^m,$ then
\\
(\romannumeral1) \  $[u]_\la\in K_C(\mathbb{R}^m)$ for all $\la\in [0,1]$;
\\
(\romannumeral2) \ $[u]_\la=\bigcap_{\gamma<\lambda}[u]_\gamma$ for all $\la\in (0,1]$;
\\
(\romannumeral3) \ $[u]_0=\overline{\bigcup_{\gamma>0}[u]_\gamma}$.

Moreover, if the family of sets $\{v_\al:\al\in [0,1]\}$ satisfy
conditions $(\romannumeral1)$ through $(\romannumeral3)$ then there exists a unique $u\in E^m$
such that $[u]_{\la}=v_\lambda$ for each $\la\in [0,1].$
\end{pp}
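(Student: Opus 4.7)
The plan is to prove the two directions separately, relying on the direct definition of the $\alpha$-cut together with Proposition \ref{mce} (monotone convergence in the Hausdorff metric) only where it shortens the exposition. Since this is the classical Negoita--Ralescu representation theorem, the bulk of the work is bookkeeping with level sets; the only genuine content is in verifying conditions (ii) and (iii) for the constructed fuzzy set in the second half.

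For the forward direction, I would fix $u\in E^m$ and argue each condition in turn. For (i), compactness of $[u]_\lambda$ with $\lambda\in(0,1]$ follows from upper semicontinuity (closedness) together with condition (iv-1) (boundedness of $[u]_0 \supseteq [u]_\lambda$); convexity of $[u]_\lambda$ is immediate from the fuzzy convexity assumption (iii-1), since if $x,y\in[u]_\lambda$ then $u(\la x+(1-\la)y)\geq\min\{u(x),u(y)\}\geq\lambda$. For $\lambda=0$, compactness of $[u]_0=\overline{\{x:u(x)>0\}}$ uses boundedness plus closedness by definition; convexity of $[u]_0$ is obtained by taking closure of the convex set $\bigcup_{\gamma>0}[u]_\gamma$. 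For (ii), the inclusion $[u]_\lambda\subseteq\bigcap_{\gamma<\lambda}[u]_\gamma$ is trivial by monotonicity, and the reverse follows because $x\in\bigcap_{\gamma<\lambda}[u]_\gamma$ means $u(x)\geq\gamma$ for all $\gamma<\lambda$, hence $u(x)\geq\lambda$. For (iii), the identity $\{x:u(x)>0\}=\bigcup_{\gamma>0}[u]_\gamma$ is definitional, and taking closure gives the stated equality.

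For the reverse direction, given a family $\{v_\alpha:\alpha\in[0,1]\}$ satisfying (i)--(iii), I would define
\[
u(x)=\sup\{\alpha\in[0,1]:x\in v_\alpha\},
\]
with the convention $\sup\emptyset=0$, and verify the four assumptions defining $E^m$ together with $[u]_\lambda=v_\lambda$. Normality is immediate because $v_1$ is nonempty (it is in $K_C(\mathbb R^m)$) and any $x_0\in v_1$ satisfies $u(x_0)=1$. Fuzzy convexity follows from the convexity of each $v_\alpha$: if $u(x)\geq\alpha$ and $u(y)\geq\alpha$ in the sense that $x,y\in\bigcap_{\gamma<\alpha}v_\gamma$, then $\la x+(1-\la)y$ also lies in each $v_\gamma$ with $\gamma<\alpha$ by convexity, so $u(\la x+(1-\la)y)\geq\alpha$. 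The key step is the identity $[u]_\lambda=v_\lambda$: for $\lambda\in(0,1]$, if $x\in v_\lambda$ then $u(x)\geq\lambda$ by definition, giving $v_\lambda\subseteq[u]_\lambda$; conversely, if $u(x)\geq\lambda$, then for every $\gamma<\lambda$ one can pick $\alpha\in[\gamma,u(x)]$ with $x\in v_\alpha\subseteq v_\gamma$ (using monotonicity of the family, which itself follows from (ii)), so $x\in\bigcap_{\gamma<\lambda}v_\gamma=v_\lambda$ by hypothesis (ii). For $\lambda=0$, condition (iii) gives $v_0=\overline{\bigcup_{\gamma>0}v_\gamma}=\overline{\{x:u(x)>0\}}=[u]_0$. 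With $[u]_\lambda=v_\lambda$ proven, upper semicontinuity (closedness of every $[u]_\lambda$) and condition (iv-1) transfer directly from the corresponding properties of $v_\lambda$.

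The main obstacle, as usual in this theorem, is the equality $[u]_\lambda=v_\lambda$ for $\lambda\in(0,1]$, because it is precisely here that condition (ii) is needed in an essential way; monotonicity alone is insufficient. Uniqueness is then a one-liner: two fuzzy sets with identical $\alpha$-cuts for every $\alpha\in[0,1]$ must coincide pointwise, since $u(x)=\sup\{\alpha:x\in[u]_\alpha\}$ is recovered from the level sets.
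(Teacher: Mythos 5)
The paper does not prove this proposition: it is quoted verbatim from Negoita and Ralescu \cite{nr} as a known representation theorem, so there is no in-paper argument to compare against. Your proof is correct and is the standard one — level-set bookkeeping for the forward direction, the supremum construction $u(x)=\sup\{\alpha: x\in v_\alpha\}$ for the converse, with the essential use of condition (\romannumeral2) exactly where you flag it, namely in showing $[u]_\lambda\subseteq v_\lambda$ — so it can stand as a self-contained substitute for the citation.
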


Similarly, we can obtain representation theorems for $S_0^m$, $S^m$, $\widetilde{S}^m$, $F_{B}   (  \mathbb{R}^m ) $,
$E^{m,p}$, $S_0^{m,p}$, $S^{m,p}$, $\widetilde{S}^{m,p}$ and $F_{B}   (  \mathbb{R}^m )^p$ which are listed below and will be used in the sequel.

\begin{tm} \label{rs0lp}
  Given $u\in S_0^{m},$ then
\\
(\romannumeral1) \ $[u]_\la\in K_S(\mathbb{R}^m)$ for all $\la\in [0,1]$,
and
$0 \in \bigcap_{\al\in (0,1]} \mbox{ker}\; [u]_\alpha $;
\\
(\romannumeral2) \ $[u]_\la=\bigcap_{\gamma<\lambda}[u]_\gamma$ for all $\la\in (0,1]$;
\\
(\romannumeral3) \  $[u]_0=\overline{\bigcup_{\gamma>0}[u]_\gamma}$.

Moreover, if the family of sets $\{v_\al:\al\in [0,1]\}$ satisfy
conditions $(\romannumeral1)$ through $(\romannumeral3)$, then there exists a unique $u\in S_0^{m}$
such that $[u]_{\la}=v_\lambda$ for each $\la\in [0,1]$.
\end{tm}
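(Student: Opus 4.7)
The plan is to adapt the classical Negoita--Ralescu argument (Proposition \ref{nr}) to the star-shaped-with-respect-to-origin setting, with the star-shapedness condition encoded by the requirement $0\in\bigcap_{\alpha\in(0,1]}\mathrm{ker}\,[u]_\alpha$.

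For the forward direction, I would argue the three conditions in turn. First, for the necessity of (i): upper semi-continuity (\romannumeral2) gives closedness of every $\al$-cut, and the bounded-support assumption (\romannumeral4-1) then yields $[u]_\al\in K(\mathbb{R}^m)$ for each $\al\in[0,1]$. From fuzzy star-shapedness with respect to the origin (\romannumeral3-2$^0$), applied with $y=x_0$ (a normalization point) and $\lambda=0$, we get $u(0)\geq u(x_0)=1$, so $0\in[u]_\al$ for every $\al\in(0,1]$. For any $y\in[u]_\al$ and $\lambda\in[0,1]$, condition (\romannumeral3-2$^0$) gives $u(\lambda y)\geq u(y)\geq \al$, so $\overline{0\,y}\subset[u]_\al$, which simultaneously establishes $[u]_\al\in K_S(\mathbb{R}^m)$ and $0\in\mathrm{ker}\,[u]_\al$. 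The $\al=0$ case follows from (iii), which we prove next. For (ii), the inclusion $[u]_\la\subset\bigcap_{\gamma<\la}[u]_\gamma$ is trivial from monotonicity of cuts; the reverse inclusion follows from the order-completeness of $[0,1]$ (if $u(x)\geq\gamma$ for every $\gamma<\la$, then $u(x)\geq\la$). For (iii), $\supseteq$ is obvious, and $\subseteq$ holds because $[u]_0=\overline{\{u>0\}}=\overline{\bigcup_{\gamma>0}\{u\geq\gamma\}}=\overline{\bigcup_{\gamma>0}[u]_\gamma}$.

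For the converse, given a family $\{v_\al\}$ satisfying (i)--(iii), define
\[
u(x)=\sup\{\al\in[0,1]:x\in v_\al\},
\]
with the convention $\sup\emptyset=0$, and verify $[u]_\la=v_\la$ for every $\la\in[0,1]$. For $\la\in(0,1]$, the inclusion $v_\la\subset[u]_\la$ is immediate, while $[u]_\la\subset v_\la$ follows from (ii): if $u(x)\geq\la$ then for every $\gamma<\la$ there exists $\al\in(\gamma,1]$ with $x\in v_\al\subset v_\gamma$, so $x\in\bigcap_{\gamma<\la}v_\gamma=v_\la$. For $\la=0$, use (iii) and the definition of $u$. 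Once this is done, the four defining properties of $S_0^m$ fall out: normality holds because $0\in v_1$ by (i), giving $u(0)=1$; upper semi-continuity is equivalent to closedness of all $\al$-cuts, which is (i); the bounded-support property (\romannumeral4-1) comes from $[u]_0=v_0$ being compact by (i); and for fuzzy star-shapedness with respect to the origin, given $y\in\mathbb{R}^m$ with $u(y)=\beta$, if $\beta=0$ there is nothing to check, and if $\beta>0$ then for every $\gamma<\beta$ we have $y\in v_\gamma$, and since $0\in\mathrm{ker}\,v_\gamma$ by (i) we obtain $\lambda y\in v_\gamma$ for every $\lambda\in[0,1]$, so $u(\lambda y)\geq\gamma$; letting $\gamma\uparrow\beta$ yields $u(\lambda y)\geq u(y)$, which is (\romannumeral3-2$^0$).

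Uniqueness is routine: two fuzzy sets in $S_0^m$ with the same $\al$-cuts agree pointwise, since $u(x)=\sup\{\al:x\in[u]_\al\}$ for any upper semi-continuous normal fuzzy set. The main obstacle I expect is the verification of fuzzy star-shapedness with respect to the origin in the converse, since one must pass from the kernel condition on individual cuts to a pointwise inequality on $u$; the limiting argument $\gamma\uparrow\beta$ combined with condition (ii) is the key device and should be written carefully. Everything else is a direct transcription of the Negoita--Ralescu proof with the extra bookkeeping that $0$ lies in every positive cut and in its kernel.
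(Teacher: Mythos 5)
Your proposal is correct and follows exactly the route the paper intends: the paper gives no explicit proof of Theorem \ref{rs0lp}, stating only that it is obtained "similarly" to the Negoita--Ralescu representation theorem (Proposition \ref{nr}), and your argument is precisely that adaptation, with the star-shapedness bookkeeping (origin in every kernel, the $\gamma\uparrow\beta$ limiting step for condition (\romannumeral3-2$^0$)) handled correctly.
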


\begin{tm} \label{rslp}
  Given $u\in S^{m},$ then
\\
(\romannumeral1) \ $[u]_\la\in K_S(\mathbb{R}^m)$ for all $\la\in [0,1]$,
and
$\bigcap_{\al\in (0,1]} \mbox{ker}\; [u]_\alpha \not= \emptyset$;
\\
(\romannumeral2) \ $[u]_\la=\bigcap_{\gamma<\lambda}[u]_\gamma$ for all $\la\in (0,1]$;
\\
(\romannumeral3) \  $[u]_0=\overline{\bigcup_{\gamma>0}[u]_\gamma}$.

Moreover, if the family of sets $\{v_\al:\al\in [0,1]\}$ satisfy
conditions $(\romannumeral1)$ through $(\romannumeral3)$, then there exists a unique $u\in S^{m}$
such that $[u]_{\la}=v_\lambda$ for each $\la\in [0,1]$.
\end{tm}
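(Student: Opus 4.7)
My plan is to prove this representation theorem in close analogy with Proposition \ref{nr} and Theorem \ref{rs0lp}, the key new ingredient being the careful handling of the single star-center that witnesses fuzzy star-shapedness.

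For the forward direction, assume $u \in S^m$ with star-center $x \in \mathbb{R}^m$. Properties (ii) and (iii) are inherited from the standard Negoita--Ralescu argument, since they rely only on upper semi-continuity of $u$ and the definition of $[u]_0$ as the closure of the support. For (i), compactness of $[u]_\lambda$ for $\lambda \in [0,1]$ is the standard consequence of upper semi-continuity combined with boundedness of $[u]_0$. Star-shapedness of $[u]_\alpha$ and the nonemptiness of $\bigcap_{\alpha \in (0,1]} \text{ker}\,[u]_\alpha$ are both captured by the single center $x$: for every $\alpha \in (0,1]$ and every $y \in [u]_\alpha$, the inequality $u(\lambda y + (1-\lambda)x) \geq u(y) \geq \alpha$ directly gives $\overline{xy} \subset [u]_\alpha$, so $x \in \text{ker}\,[u]_\alpha$, hence $x \in \bigcap_{\alpha \in (0,1]} \text{ker}\,[u]_\alpha$.

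For the backward direction, given $\{v_\alpha : \alpha \in [0,1]\}$ satisfying (i)--(iii), I would take the classical construction
\[
u(x) = \sup\{\alpha \in [0,1] : x \in v_\alpha\}
\]
with the convention $\sup \emptyset = 0$. Using (ii) and (iii), the standard argument yields $[u]_\lambda = v_\lambda$ for each $\lambda \in [0,1]$; since $v_1 \in K_S(\mathbb{R}^m)$ is nonempty, $u$ is normal; upper semi-continuity follows from compactness of each $v_\lambda$ together with (ii); compact support follows from compactness of $v_0$ via (iii). For fuzzy star-shapedness, pick any $x \in \bigcap_{\alpha \in (0,1]} \text{ker}\,v_\alpha$, which is nonempty by (i). Then for any $y \in \mathbb{R}^m$ with $u(y) = \beta > 0$ and any $\lambda \in [0,1]$: for every $\gamma < \beta$ we have $y \in v_\gamma$, so star-shapedness of $v_\gamma$ about $x$ gives $\lambda y + (1-\lambda)x \in v_\gamma$, hence $u(\lambda y + (1-\lambda)x) \geq \gamma$; letting $\gamma \uparrow \beta$ shows $u(\lambda y + (1-\lambda)x) \geq u(y)$ (the case $u(y)=0$ is vacuous). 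Uniqueness is immediate, since any fuzzy set is determined by its family of $\alpha$-cuts.

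The main obstacle, as I see it, is the subtle distinction from Theorem \ref{rs0lp}: there the center $0$ is fixed and is required in advance to lie in every kernel, whereas here we only know $\bigcap_\alpha \text{ker}\,v_\alpha \neq \emptyset$ and must select a common star-center to serve simultaneously for every $\alpha$-cut. The crux is recognising that the definition (\romannumeral3-2) of fuzzy star-shapedness asks for exactly such a uniform center, so any element of $\bigcap_{\alpha \in (0,1]} \text{ker}\,v_\alpha$ works; no additional compatibility between cuts at different levels is required beyond what is already encoded in (i)--(iii). Once this is observed the verification proceeds cut-by-cut and closes cleanly.
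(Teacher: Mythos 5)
Your proposal is correct and follows essentially the route the paper intends: the paper gives no explicit proof, stating only that the result follows ``similarly'' to the Negoita--Ralescu representation theorem (Proposition \ref{nr}), and your adaptation --- observing that the single star-center $x$ from condition (\romannumeral3-2) lies in $\ker\,[u]_\alpha$ for every $\alpha\in(0,1]$, and conversely that any point of $\bigcap_{\alpha\in(0,1]}\mbox{ker}\;v_\alpha$ serves as a uniform center for the reconstructed $u$ via the usual $\gamma\uparrow\beta$ limit --- is exactly the needed modification. The only point worth adding explicitly is that $[u]_0\in K_S(\mathbb{R}^m)$ in part (\romannumeral1): it is the closure of $\bigcup_{\gamma>0}[u]_\gamma$, a union of sets all star-shaped about the common center $x$, and such a closure is again star-shaped about $x$.
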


\begin{tm} \label{rgslp}
  Given $u\in \widetilde{S}^{m},$ then
\\
(\romannumeral1) \  $[u]_\la\in K_S(\mathbb{R}^m)$ for all $\la\in [0,1]$;
\\
(\romannumeral2) \ $[u]_\la=\bigcap_{\gamma<\lambda}[u]_\gamma$ for all $\la\in (0,1]$;
\\
(\romannumeral3) \  $[u]_0=\overline{\bigcup_{\gamma>0}[u]_\gamma}$.

Moreover, if the family of sets $\{v_\al:\al\in [0,1]\}$ satisfy
conditions $(\romannumeral1)$ through $(\romannumeral3)$, then there exists a unique $u\in \widetilde{S}^{m}$
such that $[u]_{\la}=v_\lambda$ for each $\la\in [0,1]$.
\end{tm}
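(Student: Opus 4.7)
The plan is to model the argument on the classical Negoita--Ralescu representation (Proposition \ref{nr}), replacing convexity considerations by star-shapedness ones and invoking the closedness result Theorem \ref{ksc} at the one spot where it is needed.

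\textbf{Necessity of (i)--(iii).} First I would verify (ii) and (iii) directly from assumptions (\romannumeral1), (\romannumeral2) and (\romannumeral4-1): upper semi-continuity of $u$ gives $[u]_\lambda=\bigcap_{\gamma<\lambda}[u]_\gamma$ for $\lambda\in(0,1]$, while (iii) is just the definition of $[u]_0$. For (i) with $\lambda\in(0,1]$, upper semi-continuity together with boundedness of $[u]_0$ forces $[u]_\lambda\in K(\mathbb{R}^m)$, and star-shapedness of the cut is precisely condition (\romannumeral3-3) in the definition of $\widetilde{S}^m$, which asserts the existence of the required point $x_\lambda\in[u]_\lambda$. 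The delicate case is $\lambda=0$. Here I would pick $\alpha_n\downarrow 0$, note that the cuts $[u]_{\alpha_n}$ are nested and increasing with $\overline{\bigcup_n[u]_{\alpha_n}}=[u]_0\in K(\mathbb{R}^m)$, then apply the second part of Proposition \ref{mce} to conclude $H([u]_{\alpha_n},[u]_0)\to 0$. Since each $[u]_{\alpha_n}$ lies in $K_S(\mathbb{R}^m)$ by the previous case, Theorem \ref{ksc} (closedness of $K_S(\mathbb{R}^m)$ in $(K(\mathbb{R}^m),H)$) gives $[u]_0\in K_S(\mathbb{R}^m)$.

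\textbf{Sufficiency.} Given $\{v_\alpha\}$ satisfying (i)--(iii), I would define $u\colon\mathbb{R}^m\to[0,1]$ by $u(x)=\sup\{\alpha\in[0,1]:x\in v_\alpha\}$ (with the convention $\sup\emptyset=0$) and then check $[u]_\lambda=v_\lambda$ for every $\lambda\in[0,1]$. For $\lambda\in(0,1]$, the inclusion $v_\lambda\subseteq[u]_\lambda$ is immediate, and the reverse inclusion follows by writing $[u]_\lambda\subseteq\bigcap_{\gamma<\lambda}v_\gamma$ and invoking (ii). For $\lambda=0$, one combines the definition of $u$ with (iii). With these identifications, (\romannumeral1) gives $v_1\neq\emptyset$, hence normality of $u$; condition (ii) is exactly upper semi-continuity of $u$; (\romannumeral4-1) for $u$ follows from $[u]_0=v_0\in K(\mathbb{R}^m)$; and condition (\romannumeral3-3) for $u$ is precisely the star-shapedness of each $v_\alpha=[u]_\alpha$ guaranteed by (i). Thus $u\in\widetilde{S}^m$. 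Uniqueness is standard: if $u_1,u_2\in\widetilde{S}^m$ have the same cuts, then $u_i(x)=\sup\{\alpha:x\in[u_i]_\alpha\}$ forces $u_1=u_2$.

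\textbf{Main obstacle.} The one step that is not essentially a verbatim copy of the classical proof is establishing that $[u]_0$ is itself star-shaped, since star-shaped kernels at different levels need not coincide and so there is no obvious common ``center'' one can use by hand. My plan routes around this by appealing to the closedness of $K_S(\mathbb{R}^m)$ under Hausdorff convergence (Theorem \ref{ksc}), which lets me lift the star-shapedness from the positive cuts to the zero cut via a monotone Hausdorff approximation supplied by Proposition \ref{mce}. Everything else is a bookkeeping adaptation of Proposition \ref{nr}.
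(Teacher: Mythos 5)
Your proof is correct. The paper never writes out a proof of this theorem---it appears in a list of representation theorems that are claimed to follow ``similarly'' to Proposition \ref{nr}---and your argument is precisely the intended adaptation, including the one genuinely non-routine step (star-shapedness of the zero cut $[u]_0$, which is \emph{not} part of the definition of $\widetilde{S}^m$ and so must be derived), which you settle correctly by combining the monotone Hausdorff convergence of Proposition \ref{mce} with the closedness of $K_S(\mathbb{R}^m)$ from Theorem \ref{ksc}, the very tools the paper establishes immediately beforehand.
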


\begin{tm} \label{rfbe}
  Given $u\in     F_{B}   (  \mathbb{R}^m ) ,$ then
\\
(\romannumeral1) \ $[u]_\la\in K(\mathbb{R}^m)$ for all $\la\in [0,1]$;
\\
(\romannumeral2) \ $[u]_\la=\bigcap_{\gamma<\lambda}[u]_\gamma$ for all $\la\in (0,1]$;
\\
(\romannumeral3) \  $[u]_0=\overline{\bigcup_{\gamma>0}[u]_\gamma}$.

Moreover, if the family of sets $\{v_\al:\al\in [0,1]\}$ satisfy
conditions $(\romannumeral1)$ through $(\romannumeral3)$, then there exists a unique $u\in  F_{B}   (  \mathbb{R}^m ) $
such that $[u]_{\la}=v_\lambda$ for each $\la\in [0,1]$.
\end{tm}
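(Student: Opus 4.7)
The theorem is the natural extension of the Negoita--Ralescu representation (Proposition \ref{nr}) with the convexity clause dropped, so my plan is to mimic that classical argument, taking care to check that convexity is never actually used in establishing the structural properties of the cut-sets. The assertion splits into two directions: a forward one in which the three conditions are deduced from membership in $F_{B}(\mathbb{R}^m)$, and a converse reconstruction recovering $u$ from a prescribed family $\{v_\alpha\}$.

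For the forward direction I verify (i)--(iii) directly. Upper semi-continuity of $u$ makes each cut $[u]_\lambda$ closed, while $[u]_\lambda\subseteq[u]_0$ together with the boundedness assumption on $[u]_0$ force $[u]_\lambda$ to be compact; normality provides $[u]_1\neq\emptyset$, so every cut is nonempty and (i) holds. Condition (ii) is the elementary identity $\{u\geq\lambda\}=\bigcap_{\gamma<\lambda}\{u\geq\gamma\}$, which holds for any $[0,1]$-valued function and requires no convexity. Condition (iii) is merely the definition of $[u]_0$ as the closure of the positivity set.

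For the converse I define $u(x)=\sup\{\alpha\in[0,1]: x\in v_\alpha\}$, with the convention $\sup\emptyset=0$. Normality follows from the nonemptiness of $v_1$ (a consequence of (i)), and compactness of $v_0$ together with $[u]_0\subseteq v_0$ provides the bounded-support requirement. Upper semi-continuity of $u$ and recovery of the prescribed cuts both reduce to the level-set equality $[u]_\lambda=v_\lambda$ for $\lambda\in(0,1]$: the inclusion $v_\lambda\subseteq[u]_\lambda$ is immediate from the definition of $u$, while the reverse inclusion uses (ii) decisively---if $u(x)\geq\lambda$ then $x\in v_\gamma$ for every $\gamma<\lambda$, and (ii) collapses the intersection $\bigcap_{\gamma<\lambda}v_\gamma$ back to $v_\lambda$. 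Condition (iii) then identifies $[u]_0$ with $v_0$. Uniqueness is immediate because a $[0,1]$-valued function is determined by its family of level sets.

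The main obstacle is precisely that converse level-set equality at $\lambda\in(0,1]$: this is the one place where condition (ii) cannot be bypassed, since without it $[u]_\lambda$ could strictly contain $v_\lambda$ (the supremum in the definition of $u$ could be attained only in the limit). Fortunately no convexity, compactness-of-kernel, or star-shapedness hypotheses intervene in that step, so the Negoita--Ralescu scheme transfers almost verbatim; this is exactly why the representation survives removal of the fuzzy-convexity clause (iii-1), and the same template will apply to Theorems \ref{rs0lp}, \ref{rslp}, \ref{rgslp} and their $L_p$-extensions once one additionally checks that the relevant geometric property of the cuts (star-shapedness, common-kernel, or the integrability condition (iv-2)) is preserved under the construction $u(x)=\sup\{\alpha:x\in v_\alpha\}$.
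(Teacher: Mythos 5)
Your proposal is correct and matches the paper's intent exactly: the paper gives no explicit proof of Theorem \ref{rfbe}, stating only that it is obtained ``similarly'' to the Negoita--Ralescu representation (Proposition \ref{nr}), and your argument is precisely that adaptation, with the correct observation that convexity plays no role and that condition (\romannumeral2) is what forces $[u]_\lambda\subseteq v_\lambda$ in the reconstruction. Nothing further is needed.
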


\begin{tm} \label{rep}
  Given $u\in E^{m,p},$ then
\\
(\romannumeral1) \ $[u]_\la\in K_C(\mathbb{R}^m)$ for all $\la\in (0,1]$;
\\
(\romannumeral2) \ $[u]_\la=\bigcap_{\gamma<\lambda}[u]_\gamma$ for all $\la\in (0,1]$;
\\
(\romannumeral3) \  $[u]_0=\overline{\bigcup_{\gamma>0}[u]_\gamma}$;
\\
(\romannumeral4) \  $\left (\int_0^1   H( [u]_\al, \{0\}   )^p  \; d\alpha   \right)^{1/p}  <   +\infty $.

Moreover, if the family of sets $\{v_\al:\al\in [0,1]\}$ satisfy
conditions $(\romannumeral1)$ through $(\romannumeral4)$, then there exists a unique $u\in E^{m,p}$
such that $[u]_{\la}=v_\lambda$ for each $\la\in [0,1]$.
\end{tm}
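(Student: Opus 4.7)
The plan is to adapt the classical Negoita--Ralescu representation (Proposition \ref{nr}) to the present setting, where the compact-support hypothesis is replaced by the $L_p$-growth condition (\romannumeral4). I split the argument into the forward direction (from $u\in E^{m,p}$ to the four properties) and the reverse direction (reconstruction of $u$ from a given family $\{v_\al\}$).

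For the forward direction, condition (\romannumeral2) is the standard identity $\{x:u(x)\geq \la\}=\bigcap_{\gamma<\la}\{x:u(x)\geq\gamma\}$, and (\romannumeral3) is immediate from the definition $[u]_0=\overline{\{x:u(x)>0\}}$; condition (\romannumeral4) is literally the defining property (\romannumeral4-2) of $E^{m,p}$. In (\romannumeral1), nonemptiness, closedness, and convexity of $[u]_\la$ for $\la\in(0,1]$ follow at once from normality, upper semi-continuity, and fuzzy convexity respectively. The only delicate point is \emph{boundedness}: I would use that $\al\mapsto H([u]_\al,\{0\})$ is nonincreasing, since $\al_1<\al_2$ implies $[u]_{\al_2}\subseteq[u]_{\al_1}$. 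Hence, if $[u]_{\la_0}$ were unbounded for some $\la_0\in(0,1]$, one would have $H([u]_\al,\{0\})=+\infty$ for every $\al\in(0,\la_0]$, contradicting (\romannumeral4). This is essentially the observation recorded in Remark \ref{act}.

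For the reverse direction, given $\{v_\al:\al\in[0,1]\}$ satisfying (\romannumeral1)--(\romannumeral4), I would define $u(x):=\sup\{\al\in[0,1]:x\in v_\al\}$, with the convention $\sup\emptyset=0$. Normality is obtained by picking any $x_0\in v_1$, nonempty by (\romannumeral1). The identification $[u]_\la=v_\la$ for $\la\in(0,1]$ follows from (\romannumeral2): if $u(x)\geq\la$ then $x\in v_\gamma$ for all $\gamma<\la$, so $x\in\bigcap_{\gamma<\la}v_\gamma=v_\la$, while the reverse inclusion is trivial. This in turn yields upper semi-continuity of $u$ (each $v_\la$ is closed by (\romannumeral1)) and fuzzy convexity (each $v_\la$ is convex by (\romannumeral1)). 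Then
\[
[u]_0=\overline{\{x:u(x)>0\}}=\overline{\bigcup_{\la>0}[u]_\la}=\overline{\bigcup_{\la>0}v_\la}=v_0
\]
by (\romannumeral3), and the $L_p$ integrability (\romannumeral4-2) of $u$ transfers directly from (\romannumeral4). Uniqueness is immediate, since a fuzzy set is completely determined by its family of $\al$-cuts.

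The main technical obstacle is the boundedness step in (\romannumeral1) of the forward direction, where the $L_p$ integrability is coupled with the monotone nesting of the $\al$-cuts to rule out unboundedness on $(0,1]$; this is precisely the feature that distinguishes $E^{m,p}$ from $E^m$. A secondary subtlety in the reverse direction is to check that the zero-cut of the reconstructed $u$ equals $v_0$ rather than the (possibly strictly smaller) set $\bigcup_{\la>0}v_\la$, which is exactly why property (\romannumeral3) must be imposed on the given family.
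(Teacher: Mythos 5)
Your proposal is correct and follows exactly the route the paper intends: the paper gives no explicit proof of Theorem \ref{rep}, stating only that it is obtained ``similarly'' to the Negoita--Ralescu representation (Proposition \ref{nr}), and your argument is precisely that adaptation, with the one genuinely new ingredient---compactness of the positive cuts under hypothesis (\romannumeral4) via the monotonicity of $\al\mapsto H([u]_\al,\{0\})$---being the observation the paper records in Remark \ref{act}. No gaps.
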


\begin{tm} \label{rs0ltp}
  Given $u\in S_0^{m,p},$ then
\\
(\romannumeral1) \ $[u]_\la\in K_S(\mathbb{R}^m)$ for all $\la\in (0,1]$
 and
$0 \in \bigcap_{\al\in (0,1]} \mbox{ker}\; [u]_\alpha $;
\\
(\romannumeral2) \ $[u]_\la=\bigcap_{\gamma<\lambda}[u]_\gamma$ for all $\la\in (0,1]$;
\\
(\romannumeral3) \  $[u]_0=\overline{\bigcup_{\gamma>0}[u]_\gamma}$;
\\
(\romannumeral4) \  $\left (\int_0^1   H( [u]_\al, \{0\}   )^p  \; d\alpha   \right)^{1/p}  <   +\infty $.

Moreover, if the family of sets $\{v_\al:\al\in [0,1]\}$ satisfy
conditions $(\romannumeral1)$ through $(\romannumeral4)$, then there exists a unique $u\in S_0^{m,p}$
such that $[u]_{\la}=v_\lambda$ for each $\la\in [0,1]$.
\end{tm}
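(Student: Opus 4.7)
The plan is to adapt the standard Negoita--Ralescu style argument used to establish Proposition~\ref{nr} and the immediately preceding Theorems~\ref{rs0lp}--\ref{rep}. The forward direction (necessity of (i)--(iv)) is essentially unpacking the definitions, while the converse (sufficiency) requires reconstructing $u$ from the level family via $u(x)=\sup\{\alpha\in[0,1]:x\in v_\alpha\}$ and verifying each clause of the definition of $S_0^{m,p}$.

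For the forward direction, I would fix $u\in S_0^{m,p}$ and proceed as follows. Compactness of $[u]_\alpha$ for $\alpha\in(0,1]$ is exactly the content of Remark~\ref{act}. To see that $[u]_\alpha$ is star-shaped with $0\in\mbox{ker}\,[u]_\alpha$, I would take $y\in[u]_\alpha$ and $\lambda\in[0,1]$; by the fuzzy star-shapedness with respect to the origin of $u$, $u(\lambda y)\geq u(y)\geq\alpha$, so the whole segment $\overline{0y}$ lies in $[u]_\alpha$. Condition (ii) follows from the upper semi-continuity of $u$ in the usual way (one inclusion is monotonicity, the other uses that $u(x)\geq\gamma$ for all $\gamma<\lambda$ forces $u(x)\geq\lambda$). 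Condition (iii) is the very definition of the $0$-cut adopted in Section~\ref{fsp}, and (iv) is simply the restatement of assumption (iv-2) in the definition of $S_0^{m,p}$.

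For the converse, given a family $\{v_\alpha:\alpha\in[0,1]\}$ satisfying (i)--(iv), I would define $u:\mathbb{R}^m\to[0,1]$ by the sup-formula above, and then verify the four defining properties of $S_0^{m,p}$ together with $[u]_\lambda=v_\lambda$. The crucial identity $[u]_\lambda=v_\lambda$ for $\lambda\in(0,1]$ is handled by the standard two-step argument: $v_\lambda\subseteq[u]_\lambda$ is immediate from the definition, and $[u]_\lambda\subseteq v_\lambda$ follows because if $u(x)\geq\lambda$ then $x\in v_\gamma$ for every $\gamma<\lambda$, so (ii) applied to the $v$-family gives $x\in\bigcap_{\gamma<\lambda}v_\gamma=v_\lambda$. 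The case $\lambda=0$ is then forced by (iii). Normality comes from nonemptiness of $v_1\in K_S(\mathbb{R}^m)$. Upper semi-continuity is equivalent to the closedness of each $[u]_\lambda$ for $\lambda\in(0,1]$, which now follows since $[u]_\lambda=v_\lambda$ is compact, hence closed. Fuzzy star-shapedness at $0$ for $u$ is obtained by observing that for any $y$ with $u(y)=\alpha$ and any $\lambda\in[0,1]$, we have $y\in v_\beta$ for all $\beta<\alpha$, and by $0\in\mbox{ker}\,v_\beta$ the segment $\overline{0y}$ sits in $v_\beta$, so $u(\lambda y)\geq\beta$ for all $\beta<\alpha$, yielding $u(\lambda y)\geq\alpha=u(y)$. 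The $L_p$-integrability (iv-2) for $u$ transfers from (iv) on $\{v_\alpha\}$ via $[u]_\alpha=v_\alpha$. Uniqueness is clear because any $u'\in S_0^{m,p}$ with $[u']_\lambda=v_\lambda$ for all $\lambda\in[0,1]$ must satisfy $u'(x)=\sup\{\alpha:x\in v_\alpha\}=u(x)$ pointwise.

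I do not expect any serious obstacle; the only delicate point (compared with the classical Negoita--Ralescu theorem for $E^m$) is that $v_0$ may be unbounded, so one must be careful never to assert $v_0\in K(\mathbb{R}^m)$ or to apply compactness to the $0$-cut. Every use of compactness or Proposition~\ref{mce} is therefore restricted to $\alpha\in(0,1]$, where assumption (i) gives $v_\alpha\in K_S(\mathbb{R}^m)\subset K(\mathbb{R}^m)$, and the closure operation in (iii) takes care of $v_0$ without needing boundedness.
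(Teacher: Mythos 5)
Your proof is correct, and it is exactly the standard Negoita--Ralescu style argument that the paper itself invokes (it states Theorem~\ref{rs0ltp} without proof, saying only that it is obtained ``similarly'' to Proposition~\ref{nr}). Your handling of the two points where $S_0^{m,p}$ differs from $E^m$ --- propagating $0\in\mbox{ker}\,[u]_\alpha$ through the sup-reconstruction, and restricting all compactness arguments to $\alpha\in(0,1]$ because the $0$-cut may be unbounded --- is precisely what is needed.
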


\begin{tm} \label{rs0p}
  Given $u\in S^{m,p},$ then
\\
(\romannumeral1) \ $[u]_\la\in K_S(\mathbb{R}^m)$ for all $\la\in (0,1]$
 and
$\bigcap_{\al\in (0,1]} \mbox{ker}\; [u]_\alpha \not= \emptyset$;
\\
(\romannumeral2) \ $[u]_\la=\bigcap_{\gamma<\lambda}[u]_\gamma$ for all $\la\in (0,1]$;
\\
(\romannumeral3) \  $[u]_0=\overline{\bigcup_{\gamma>0}[u]_\gamma}$;
\\
(\romannumeral4) \  $\left (\int_0^1   H( [u]_\al, \{0\}   )^p  \; d\alpha   \right)^{1/p}  <   +\infty $.

Moreover, if the family of sets $\{v_\al:\al\in [0,1]\}$ satisfy
conditions $(\romannumeral1)$ through $(\romannumeral4)$, then there exists a unique $u\in S^{m,p}$
such that $[u]_{\la}=v_\lambda$ for each $\la\in [0,1]$.
\end{tm}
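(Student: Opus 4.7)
The plan is to handle the two directions of this representation theorem separately, mirroring the argument pattern already established for Theorems \ref{rep} and \ref{rs0ltp} but incorporating the star-shaped kernel condition.

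For the necessity direction (assume $u \in S^{m,p}$), conditions (ii) and (iii) are standard consequences of upper semi-continuity of $u$, and (iv) is immediate from the definition of $S^{m,p}$. For (i), Remark \ref{act} ensures each $[u]_\lambda$ is compact for $\lambda \in (0,1]$, so only the common kernel point needs exhibiting. Let $x$ be the witness of the fuzzy star-shapedness of $u$ (condition (\romannumeral3-2)). Picking any $x_1$ with $u(x_1) = 1$ and applying $u(\lambda x_1 + (1-\lambda)x) \geq u(x_1) = 1$ at $\lambda = 0$ yields $x \in [u]_1$. For arbitrary $\alpha \in (0,1]$ and $y \in [u]_\alpha$, the inequality $u(\lambda y + (1-\lambda) x) \geq u(y) \geq \alpha$ for all $\lambda\in[0,1]$ shows $\overline{xy} \subset [u]_\alpha$, so $x \in \mathrm{ker}\, [u]_\alpha$. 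Thus $x \in \bigcap_{\alpha \in (0,1]} \mathrm{ker}\, [u]_\alpha$, and each $[u]_\alpha$ is star-shaped.

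For the sufficiency direction, given $\{v_\alpha\}$ satisfying (i)--(iv), define
\[
u(x) := \sup\{\alpha \in [0,1] : x \in v_\alpha\},
\]
with the convention $\sup \emptyset = 0$. The standard argument exploiting (ii) shows $[u]_\lambda = v_\lambda$ for $\lambda \in (0,1]$, and (iii) yields the equality at $\lambda = 0$. Normality follows from nonemptiness of $v_1$ (a compact star-shaped set), upper semi-continuity from closedness of each $v_\lambda$, and the $L_p$ condition transfers directly from (iv) since $[u]_\alpha = v_\alpha$. The crucial step is verifying fuzzy star-shapedness of $u$: using (i), pick $x_0 \in \bigcap_{\alpha \in (0,1]} \mathrm{ker}\, v_\alpha$. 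For any $y \in \mathbb{R}^m$ with $u(y) = \beta > 0$ and any $\lambda \in [0,1]$, we have $y \in v_\gamma$ for every $\gamma < \beta$, so the kernel property gives $\lambda y + (1-\lambda) x_0 \in v_\gamma$ for every such $\gamma$; hence $u(\lambda y + (1-\lambda) x_0) \geq \beta = u(y)$. The case $u(y) = 0$ is trivial. Uniqueness is routine since the $\alpha$-cuts determine the fuzzy set.

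I do not anticipate any serious obstacle; the only genuinely nontrivial point is recognizing that the witness for fuzzy star-shapedness of the constructed $u$ should be precisely a common kernel point of all the $v_\alpha$, which is exactly what the second half of condition (i) supplies. All other verifications are parallel to the already-stated representation theorems and require only minor bookkeeping.
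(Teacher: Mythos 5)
Your proof is correct and follows exactly the route the paper intends: the statement is given without an explicit proof (the authors assert it ``similarly'' to the Negoita--Ralescu representation theorem, Proposition \ref{nr}), and your argument supplies the standard cut-set construction together with the one genuinely new ingredient, namely the two-way correspondence between a witness point for fuzzy star-shapedness of $u$ and a common point of $\bigcap_{\al\in(0,1]}\mbox{ker}\,[u]_\al$. No gaps; the only cosmetic quibble is that conditions (\romannumeral2) and (\romannumeral3) hold for any fuzzy set by definition of the cuts rather than as consequences of upper semi-continuity.
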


\begin{tm}
  \label{rsp}
Given $u\in \widetilde{S}^{m,p},$ then
\\
(\romannumeral1) \ $[u]_\la\in K_S(\mathbb{R}^m)$ for all $\la\in (0,1]$;
\\
(\romannumeral2) \ $[u]_\la=\bigcap_{\gamma<\lambda}[u]_\gamma$ for all $\la\in (0,1]$;
\\
(\romannumeral3) \  $[u]_0=\overline{\bigcup_{\gamma>0}[u]_\gamma}$;
\\
(\romannumeral4) \  $\left (\int_0^1   H( [u]_\al, \{0\}   )^p  \; d\alpha   \right)^{1/p}  <   +\infty $.

Moreover, if the family of sets $\{v_\al:\al\in [0,1]\}$ satisfy
conditions $(\romannumeral1)$ through $(\romannumeral4)$, then there exists a unique $u\in   \widetilde{S}^{m,p}$
such that $[u]_{\la}=v_\lambda$ for each $\la\in [0,1]$.

\end{tm}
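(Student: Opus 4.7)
The plan is to mimic the standard pattern established by the earlier representation theorems in the section (Proposition \ref{nr} and Theorems \ref{rs0lp}--\ref{rs0p}), adjusting only the ``star-shapedness'' clause (iii-3), which is a cut-wise condition with no joint kernel requirement. The proof splits into a necessity part (properties of $[u]_\alpha$ when $u\in\widetilde{S}^{m,p}$) and a sufficiency plus uniqueness part (reconstructing $u$ from a compatible family $\{v_\alpha\}$).

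For necessity, suppose $u\in\widetilde{S}^{m,p}$. Condition (i) is recorded in Remark \ref{act}. Condition (ii) is the well known reformulation of upper semi-continuity of $u$ in terms of its cuts: $x\in\bigcap_{\gamma<\lambda}[u]_\gamma$ iff $u(x)\geq\gamma$ for every $\gamma<\lambda$, iff $u(x)\geq\lambda$. Condition (iii) is just the definition of $[u]_0$ combined with $\bigcup_{\gamma>0}[u]_\gamma=\{x:u(x)>0\}$. Condition (iv) is literally assumption (iv-2) imposed on members of $\widetilde{S}^{m,p}$.

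For sufficiency, given a family $\{v_\alpha\}_{\alpha\in[0,1]}$ satisfying (i)--(iv), define
\[
u(x)=\sup\{\alpha\in[0,1]:x\in v_\alpha\},
\]
with the convention $\sup\emptyset:=0$. The main verification is $[u]_\alpha=v_\alpha$ for every $\alpha\in[0,1]$: the inclusion $v_\alpha\subseteq[u]_\alpha$ is immediate from the definition; for $\alpha\in(0,1]$ the reverse inclusion follows from (ii) via $[u]_\alpha=\bigcap_{\gamma<\alpha}v_\gamma=v_\alpha$, while for $\alpha=0$ it follows from (iii) and the fact that $[u]_0$ is already the closure of the positive level set. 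Once this identification is in hand, the remaining properties of $u$ are read off directly: $u$ is normal because $v_1\in K_S(\mathbb{R}^m)$ is nonempty; $u$ is upper semi-continuous because each $[u]_\alpha=v_\alpha$ is closed; $u$ satisfies (iii-3) because each $v_\alpha$ is star-shaped and so admits at least one $x_\alpha\in\mathrm{ker}\,v_\alpha$; and condition (iv-2) for $u$ is exactly (iv) for $\{v_\alpha\}$ since $H([u]_\alpha,\{0\})=H(v_\alpha,\{0\})$. Uniqueness is automatic: any $u'\in\widetilde{S}^{m,p}$ with $[u']_\alpha=v_\alpha$ for all $\alpha$ must satisfy $u'(x)=\sup\{\alpha:x\in v_\alpha\}=u(x)$.

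The hard part, or rather the only place where care is needed, is the endpoint $\alpha=0$: condition (i) only requires $v_\alpha\in K_S(\mathbb{R}^m)$ for $\alpha\in(0,1]$, because $[u]_0$ is the only cut allowed to be unbounded under (iv-2), so one must not appeal to (i) at $\alpha=0$. This is handled by using (iii) in place of (ii) for $\alpha=0$. Since the argument at every other step is a literal transcription of the corresponding step in Theorems \ref{rep}--\ref{rs0p}, with the cut-wise star-shapedness (iii-3) replacing convexity or global-kernel star-shapedness, I would simply indicate the one-line changes and refer back to the earlier proofs.
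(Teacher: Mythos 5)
Your proof is correct and follows exactly the route the paper intends: Theorem \ref{rsp} is stated in the paper without proof, as one of a list of representation theorems said to follow ``similarly'' to the Negoita--Ralescu theorem (Proposition \ref{nr}), and your argument is precisely that standard cut-reconstruction argument, with the right adjustments — star-shapedness of each positive cut supplied cut-wise by (\romannumeral3-3), compactness of the positive cuts supplied by Remark \ref{act}, and the $0$-cut handled via condition (\romannumeral3) rather than (\romannumeral1). No gaps.
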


\begin{tm}
  \label{rfbp}
Given $u\in F_{B}   (  \mathbb{R}^m )^p,$ then
\\
(\romannumeral1) \ $[u]_\la\in K(\mathbb{R}^m)$ for all $\la\in (0,1]$;
\\
(\romannumeral2) \ $[u]_\la=\bigcap_{\gamma<\lambda}[u]_\gamma$ for all $\la\in (0,1]$;
\\
(\romannumeral3) \  $[u]_0=\overline{\bigcup_{\gamma>0}[u]_\gamma}$;
\\
(\romannumeral4) \  $\left (\int_0^1   H( [u]_\al, \{0\}   )^p  \; d\alpha   \right)^{1/p}  <   +\infty $.

Moreover, if the family of sets $\{v_\al:\al\in [0,1]\}$ satisfy
conditions $(\romannumeral1)$ through $(\romannumeral4)$, then there exists a unique $u\in   F_{B}   (  \mathbb{R}^m )^p$
such that $[u]_{\la}=v_\lambda$ for each $\la\in [0,1]$.

\end{tm}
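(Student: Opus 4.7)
The plan is to adapt the standard Negoita--Ralescu representation argument (as in Proposition \ref{nr}) to the present setting, since $F_B(\mathbb{R}^m)^p$ differs from $E^m$ only by weakening the boundedness and convexity hypotheses. The necessity direction will be almost immediate from the defining axioms of $F_B(\mathbb{R}^m)^p$; the real content is the sufficiency, where we have to reconstruct a membership function from a nested family of sets.

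For necessity, I would argue each clause in turn. Clause (ii) is the usual reformulation of upper semi-continuity: $[u]_\lambda$ is closed for every $\lambda\in(0,1]$, and $x\in\bigcap_{\gamma<\lambda}[u]_\gamma$ means $u(x)\geq\gamma$ for every $\gamma<\lambda$, which gives $u(x)\geq\lambda$. Clause (iii) is the definition of the $0$-cut. Clause (iv) is literally assumption (iv-2). For clause (i), upper semi-continuity already gives closedness, so I only need boundedness of $[u]_\lambda$ for $\lambda\in(0,1]$; this is exactly Remark \ref{act}, and its justification is that $\alpha\mapsto H([u]_\alpha,\{0\})$ is monotone non-increasing in $\alpha$ and $L_p$-integrable on $(0,1)$, hence finite for every $\alpha\in(0,1]$ (if it were infinite at some $\alpha_0>0$, monotonicity would force it to be infinite on all of $(0,\alpha_0)$, violating (iv)).

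For sufficiency, given $\{v_\alpha:\alpha\in[0,1]\}$ satisfying (i)--(iv), I would define
\[
u(x):=\sup\{\alpha\in[0,1]:x\in v_\alpha\},
\]
with the convention $\sup\emptyset=0$, and verify that $u\in F_B(\mathbb{R}^m)^p$ with $[u]_\lambda=v_\lambda$ for every $\lambda\in[0,1]$. The key identity $[u]_\lambda=v_\lambda$ for $\lambda\in(0,1]$ has two inclusions: the inclusion $v_\lambda\subseteq[u]_\lambda$ is immediate from the definition of $u$, while the reverse inclusion is where property (ii) is indispensable --- if $u(x)\geq\lambda$, then for every $\gamma<\lambda$ there exists $\alpha\geq\gamma$ with $x\in v_\alpha\subseteq v_\gamma$, so $x\in\bigcap_{\gamma<\lambda}v_\gamma=v_\lambda$. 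Normality of $u$ follows because $v_1$ is nonempty (it belongs to $K(\mathbb{R}^m)$); upper semi-continuity follows because each $[u]_\lambda=v_\lambda$ is closed for $\lambda\in(0,1]$; the compact-support $L_p$ condition (iv-2) for $u$ is just (iv) transported through the cut identity; and $[u]_0=v_0$ is forced by (iii) together with $[u]_\lambda=v_\lambda$ for $\lambda>0$. Uniqueness is the standard observation that a fuzzy set is determined pointwise by its $\alpha$-cuts through $u(x)=\sup\{\alpha:x\in[u]_\alpha\}$.

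The only step that is at all delicate is the inclusion $[u]_\lambda\subseteq v_\lambda$ for $\lambda\in(0,1]$, because the supremum in the definition of $u$ need not be attained; this is precisely what forces us to invoke the left-continuity axiom (ii). Once that inclusion is in hand, all remaining verifications are routine checks against the four defining axioms of $F_B(\mathbb{R}^m)^p$, and the proof is essentially line-for-line parallel to the proofs of Theorems \ref{rep}--\ref{rsp}, with the convexity or star-shapedness clauses simply omitted.
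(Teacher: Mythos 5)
Your proof is correct and follows exactly the route the paper intends: the paper states Theorem \ref{rfbp} without proof, as a routine adaptation of the Negoita--Ralescu representation (Proposition \ref{nr}), and your argument --- necessity from the axioms plus the monotonicity/integrability argument of Remark \ref{act} for boundedness of the positive cuts, and sufficiency via $u(x)=\sup\{\alpha: x\in v_\alpha\}$ with condition (\romannumeral2) supplying the inclusion $[u]_\lambda\subseteq v_\lambda$ --- is precisely that adaptation (it is also the same construction the paper itself uses inside the proof of Theorem \ref{pcn}). No gaps.
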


\section{Characterizations of relatively compact sets, totally bounded sets and compact sets in $(F_{B}   (  \mathbb{R}^m )^p, d_p)$ \label{crfpat}}

In this section, we present a characterization of relatively compact sets in
 fuzzy sets space
$(F_{B}   (  \mathbb{R}^m )^p, d_p)$.
Based on this, we then give
  characterizations of totally bounded sets and compact sets
in $(F_{B}   (  \mathbb{R}^m )^p, d_p)$.

The topic of characterizations of compactness of fuzzy sets spaces with $d_p$ metric
has been studied
for many years.
 The following two concepts are important to establish and illustrate characterizations of compactness,
which are introduced
by
 Diamond and Kloeden \cite{da3} and Ma \cite{ma}.

\begin{df}
  \cite{da3}
  Let $u\in F_{B}   (  \mathbb{R}^m )^p$. If for given $\varepsilon>0$, there is a $\delta(u,\varepsilon)>0$
  such that for all $0\leq h <\delta$
  $$ \left( \int_h^1  H([u]_\alpha,   [u]_{\alpha-h})^p \;d\alpha   \right)^{1/p}  <\varepsilon ,   $$
where  $1\leq p <+\infty$, then we say $u$ is $p$-mean left-continuous.

Suppose that $U$ is a nonempty set in $F_{B}   (  \mathbb{R}^m )^p$.
If the above inequality holds uniformly for all $u\in U$,
then
we say $U$ is  $p$-mean equi-left-continuous.
\end{df}

\begin{df}
   \cite{ma}
   A set $U\subset F_{B}   (  \mathbb{R}^m )^p$
   is said to be uniformly $p$-mean bounded if there is a constant $M>0$
   such that
   $d_p(u, \widehat{0})\leq M$
   for all $u\in U$.

\end{df}
   It is easy to check that $U$ is uniformly $p$-mean bounded is equivalent to $U$ is a
   bounded set in $( F_{B}   (  \mathbb{R}^m )^p, d_p)$.

Diamond \cite{da2,da3} characterized the compact sets in $(E^m, d_p)$ and $(S^m_0, d_p)$.

\begin{pp}
\cite{da3} \label{dempc}
  A closed set $U$ of $(E^m, d_p)$ is compact if and only if:
\\
(\romannumeral1) $\{[u]_0 : u\in U\}$ is bounded in $(K(\mathbb{R}^m), H)$;
\\
(\romannumeral2) $U$ is $p$-mean equi-left-continuous.

\end{pp}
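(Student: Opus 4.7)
The plan is to prove the two directions separately, treating (i) and (ii) in turn for necessity, and then running a diagonal-subsequence argument for sufficiency.

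For necessity, assume $U$ is compact in $(E^m,d_p)$. To obtain (ii), I would exploit that compactness in a metric space implies total boundedness. Given $\varepsilon>0$, take a finite $\varepsilon$-net $\{u_1,\dots,u_N\}\subset U$. Each individual $u_i\in E^m$ has compact support and bounded $\alpha$-cuts, so by a standard application of the dominated convergence theorem and Proposition \ref{mce} (since $[u_i]_{\alpha-h}\downarrow [u_i]_\alpha$ as $h\downarrow 0$ by fuzzy convexity and the representation in Proposition \ref{nr}), $u_i$ is $p$-mean left-continuous with some $\delta_i$. Set $\delta=\min_i\delta_i$. For arbitrary $u\in U$, pick $u_i$ with $d_p(u,u_i)<\varepsilon$; then the map $u\mapsto \bigl(\int_h^1 H([u]_\alpha,[u]_{\alpha-h})^p\,d\alpha\bigr)^{1/p}$ satisfies an $L_p$-triangle inequality with respect to the shift, so one can bound it by $2d_p(u,u_i)+\bigl(\int_h^1 H([u_i]_\alpha,[u_i]_{\alpha-h})^p\,d\alpha\bigr)^{1/p}<3\varepsilon$, yielding equi-left-continuity. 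For (i), I would argue by contradiction: if $\{[u]_0:u\in U\}$ is unbounded, pick $u_n\in U$ with $H([u_n]_0,\{0\})\to\infty$; a $d_p$-convergent subsequence $u_{n_k}\to u^*$ would in particular give $H([u_{n_k}]_\alpha,[u^*]_\alpha)\to 0$ in measure on $(0,1]$, and together with the representation $[u_{n_k}]_0=\overline{\bigcup_{\alpha>0}[u_{n_k}]_\alpha}$ and equi-left-continuity at $0$, this forces $\sup_k H([u_{n_k}]_0,\{0\})<\infty$, a contradiction.

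For sufficiency, let $\{u_n\}\subset U$. By (i) there exists $R>0$ with $[u_n]_0\subseteq \overline{B(0,R)}$ for all $n$; then every $[u_n]_\alpha$ lies in the compact space $(K_C(\overline{B(0,R)}),H)$, which is compact by Proposition \ref{kcs} and Proposition \ref{sca}. Fix a countable dense set $Q=\{\alpha_k\}\subset(0,1]$ and use a diagonal extraction to pass to a subsequence (still denoted $u_n$) and sets $v_{\alpha_k}\in K_C(\mathbb{R}^m)$ with $H([u_n]_{\alpha_k},v_{\alpha_k})\to 0$ for every $k$. Define
\[
v_\alpha:=\bigcap_{\alpha_k<\alpha,\ \alpha_k\in Q} v_{\alpha_k}\quad (\alpha\in(0,1]),\qquad v_0:=\overline{\bigcup_{\alpha>0}v_\alpha}.
\]
I would verify that the family $\{v_\alpha\}$ satisfies conditions (i)--(iii) of Proposition \ref{nr} (convexity is preserved under Hausdorff limits in $K_C$, the left-continuity requirement in $\alpha$ follows from equi-left-continuity transferred from the $u_n$, and monotonicity and nonemptiness come by construction), so Proposition \ref{nr} produces a unique $u\in E^m$ with $[u]_\alpha=v_\alpha$.

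It then remains to upgrade the pointwise-on-$Q$ Hausdorff convergence to $d_p$-convergence $u_n\to u$. Here I would combine three ingredients: the uniform bound $H([u_n]_\alpha,\{0\})\le R$ on $(0,1]$, the equi-left-continuity (ii) of $U$, and the convergence $H([u_n]_{\alpha_k},[u]_{\alpha_k})\to 0$ on the dense set $Q$, feeding these into a dominated-convergence argument on $[0,1]$ to conclude $\int_0^1 H([u_n]_\alpha,[u]_\alpha)^p\,d\alpha\to 0$. Closedness of $U$ forces $u\in U$, giving compactness. I expect the main obstacle to be part (i) of the necessity direction: the $d_p$-metric only controls the $\alpha$-cuts in an $L_p$-average sense, so pulling a genuine sup-bound on the $0$-cuts out of $d_p$-compactness really does require the full interplay between equi-left-continuity at $0$ and the representation $[u]_0=\overline{\bigcup_{\alpha>0}[u]_\alpha}$, and care is needed not to conflate $d_p$-boundedness (which is too weak) with $d_p$-compactness.
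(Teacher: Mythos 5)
There is a fundamental problem here that no amount of repair can fix: the statement you are trying to prove is false, and the paper never proves it. Proposition \ref{dempc} is quoted from Diamond and Kloeden \cite{da3}, and the surrounding text explicitly records (following Wu and Zhao \cite{wu2}) that Propositions \ref{dempc}, \ref{dsm0pc} and \ref{mgep} ``are all wrong''; the corrected criterion is Proposition \ref{gep}, in which boundedness of the $0$-cuts is replaced by the strictly weaker condition of uniform $p$-mean boundedness and a third condition is added. The false implication is exactly the one you flagged as the ``main obstacle'': compactness of $U$ in $(E^m,d_p)$ does \emph{not} imply that $\{[u]_0: u\in U\}$ is bounded in $(K(\mathbb{R}^m),H)$. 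Your contradiction argument for (\romannumeral1) asserts that $d_p$-convergence of a subsequence plus equi-left-continuity ``forces $\sup_k H([u_{n_k}]_0,\{0\})<\infty$'', but both of those hypotheses only control the cuts in an $L_p$-average sense over $\alpha\in(0,1]$, and the $0$-cut is invisible to such averages: cuts at levels $\alpha$ lying in a set of arbitrarily small measure can be arbitrarily large while contributing arbitrarily little to $d_p$ and to the equi-left-continuity integrals.

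A concrete counterexample in $E^1$: for $n\ge 2$ let $u_n(x)=1$ on $[0,1]$, $u_n(x)=n^{-p-1}$ on $(1,n]$ and $u_n(x)=0$ elsewhere, and let $u_\infty=\widehat{[0,1]}$. Each $u_n\in E^1$, and $d_p(u_n,u_\infty)^p=n^{-p-1}(n-1)^p\le 1/n\to 0$, so $U=\{u_n:n\ge2\}\cup\{u_\infty\}$ is a convergent sequence together with its limit, hence a compact (and closed) subset of $(E^1,d_p)$ which satisfies (\romannumeral2) because it is totally bounded (cf.\ Theorem \ref{tcn}); yet $[u_n]_0=[0,n]$, so (\romannumeral1) fails. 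Your necessity argument for (\romannumeral2) and your sufficiency argument (which essentially reproduces, for the convex case, Steps 1--3 of the proof of Theorem \ref{pcn}, together with the correct observation that a uniform bound on the $0$-cuts keeps every limit point inside $E^m$) are sound in outline, but the proposition as a whole cannot be proved, and the paper accordingly offers no proof of it.
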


\begin{pp}
\cite{da2} \label{dsm0pc}
  A closed set $U$ of $(S^m_0, d_p)$ is compact if and only if:
\\
(\romannumeral1) $\{[u]_0 : u\in U\}$ is bounded in $(K(\mathbb{R}^m), H)$;
\\
(\romannumeral2) $U$ is $p$-mean equi-left-continuous.

\end{pp}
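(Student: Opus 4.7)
The plan is to establish the two directions using the representation theorem for $S_0^m$ (Theorem~\ref{rs0lp}) and the Hausdorff-metric machinery of Section 2.

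For necessity, assume $U$ is compact in $(S_0^m, d_p)$. To prove (ii), take a finite $\varepsilon$-net $\{u_1, \ldots, u_N\} \subset U$. Each $u_i$ is individually $p$-mean left-continuous: by Theorem~\ref{rs0lp}(ii) the map $\alpha \mapsto [u_i]_\alpha$ is left-continuous in Hausdorff metric on $(0,1]$, boundedness of $[u_i]_0$ supplies a uniform dominant, and dominated convergence then delivers $\int_h^1 H([u_i]_\alpha,[u_i]_{\alpha-h})^p\,d\alpha \to 0$ as $h \to 0^+$. Choose a common $\delta$ valid for all $u_i$; for arbitrary $u \in U$ approximated by some $u_i$ within $\varepsilon$, apply the triangle inequality $H([u]_\alpha, [u]_{\alpha-h}) \leq H([u]_\alpha, [u_i]_\alpha) + H([u_i]_\alpha, [u_i]_{\alpha-h}) + H([u_i]_{\alpha-h}, [u]_{\alpha-h})$ combined with Minkowski's inequality in $L^p([0,1])$ to obtain the uniform bound. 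For (i), I would exploit that every $u \in S_0^m$ satisfies $0 \in \mathrm{ker}\,[u]_\alpha$ to try transferring $L_p$-boundedness of $\alpha \mapsto H([u]_\alpha, \{0\})$ --- which follows from compactness of $U$ --- into uniform Hausdorff-boundedness of the 0-cuts.

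For sufficiency, it suffices to show $U$ is totally bounded. Given a sequence $\{u_n\} \subset U$, condition (i) provides a fixed closed ball $B \subset \mathbb{R}^m$ with $[u_n]_0 \subset B$ for all $n$, hence $\{[u_n]_\alpha\}_n \subset K(B)$ is relatively compact in $(K(B),H)$ for each $\alpha$ by Proposition~\ref{sca}. A diagonal extraction over a countable dense $D \subset (0,1]$ produces a subsequence (still denoted $\{u_n\}$) with $[u_n]_\alpha \to v_\alpha$ in Hausdorff metric for every $\alpha \in D$. Define $v_\alpha := \bigcap_{\beta \in D,\,\beta < \alpha} v_\beta$ on $(0,1]$ and $v_0 := \overline{\bigcup_{\alpha>0} v_\alpha}$; nestedness and left-continuity are built in by construction. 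Star-shapedness of each $v_\alpha$ with respect to $0$ is inherited from the $[u_n]_\alpha$ via Corollary~\ref{kef} applied at levels in $D$, and Theorem~\ref{rs0lp} then yields a unique $v \in S_0^m$ with $[v]_\alpha = v_\alpha$.

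It remains to show $d_p(u_n, v) \to 0$ along the subsequence. Partition $[0,1]$ into short subintervals with endpoints in $D$; on each subinterval, equi-left-continuity (ii) uniformly bounds the $\alpha$-oscillation of $H([u_n]_\alpha, [v]_\alpha)$, while pointwise Hausdorff convergence at the endpoints controls the value there. Combining these estimates with a uniform $L^\infty$ bound in $n$ supplied by (i) yields $L_p$-in-$\alpha$ convergence, and closedness of $U$ forces $v \in U$. The main obstacle will be this last step --- trading pointwise $H$-convergence at a countable set of $\alpha$-levels for $L_p$-in-$\alpha$ convergence --- where (ii) is indispensable. A secondary delicacy lies in the necessity of (i), because $L_p$-compactness only directly controls the integral of $H([u]_\alpha, \{0\})$, not the 0-cut itself, so the star-at-origin hypothesis must be pressed into service.
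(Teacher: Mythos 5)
This proposition is not proved in the paper at all: it is quoted from Diamond's work, and the paper explicitly states (citing Wu and Zhao) that Propositions \ref{dempc}, \ref{dsm0pc} and \ref{mgep} are all \emph{wrong}; the corrected criterion for $(S^m_0,d_p)$ is Proposition \ref{gs0p}, which replaces (\romannumeral1) by uniform $p$-mean boundedness and adds a third condition. So no completion of your argument can succeed, and the place where it breaks is exactly the step you yourself call a ``secondary delicacy'': the necessity of (\romannumeral1). Compactness of $U$ in $d_p$ gives uniform control of $\int_0^1 H([u]_\alpha,\{0\})^p\,d\alpha$, but this says nothing about $H([u]_0,\{0\})$, because the $0$-cut is seen by the metric only through a neighbourhood of $\alpha=0$ whose measure can be sent to zero. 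The star-at-origin hypothesis cannot rescue this transfer.

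Concretely, take $m=1$ and define $u_n\in S^1_0$ by $[u_n]_\alpha=[0,1]$ for $\alpha\in(1/n,1]$ and $[u_n]_\alpha=[0,\,1+n^{1/(2p)}]$ for $\alpha\in[0,1/n]$. Then $d_p(u_n,\widehat{[0,1]})^p=\int_0^{1/n} n^{1/2}\,d\alpha=n^{-1/2}\to 0$, so $U=\{u_n:n\geq 1\}\cup\{\widehat{[0,1]}\}$ is a compact, closed subset of $(S^1_0,d_p)$, yet $[u_n]_0=[0,\,1+n^{1/(2p)}]$ is unbounded in $(K(\mathbb{R}),H)$, so (\romannumeral1) fails. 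Your necessity argument for (\romannumeral2) and your sufficiency argument (which mirrors the proof of Theorem \ref{pcn} together with the closedness results for $S^m_0$ in Section 6) are sound in outline; it is precisely the implication ``compact $\Rightarrow$ (\romannumeral1)'' that is false, and with it the proposition as stated.
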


Ma \cite{ma} found that   there exists an error in   Proposition \ref{dempc} and modified it to following
Proposition \ref{mgep}.
 For  the convenience of writing,
let $u\in F(\mathbb{R}^m)$,
Ma
used the symbol $u^{(\alpha)}$ to denote the fuzzy set
\[
u^{(\alpha)}(x):=
\left\{
  \begin{array}{ll}
    u(x), & \hbox{if} \  u(x) \geq\alpha,
\\
    0, & \hbox{if} \   u(x)<\alpha.
  \end{array}
\right.
\]

\begin{pp}
\cite{ma} \label{mgep}
  A closed set $U$ of $(E^m, d_p)$ is compact if and only if:
\\
(\romannumeral1) $U$ is uniformly $p$-mean bounded;
\\
(\romannumeral2) $U$ is $p$-mean equi-left-continuous;
\\
(\romannumeral3)
For $\{u_k\} \subset U$,   if $\{u_k^{(h)} : k=1,2,\ldots \}$
converges to
$u(h) \in E^m $ in $d_p$ metric for any $h>0$, then there exists a $u_0 \in E^m$
such that
$u_0^{(h)}=u(h)$.
\end{pp}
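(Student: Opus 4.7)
My plan splits into necessity and sufficiency, with the level-truncation $u^{(h)}$ serving as the uniform approximation device on the sufficiency side.

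For \textbf{necessity}, item (i) is immediate since compactness implies boundedness in $(E^m,d_p)$. For (ii), I would run the standard $\varepsilon/3$ argument: compactness yields total boundedness, so pick a finite $\varepsilon/3$-net $\{v_1,\dots,v_N\}\subset U$, verify that each $v_i$ is individually $p$-mean left-continuous using Proposition \ref{nr}(ii), Proposition \ref{mce}, and dominated convergence (the integrand $H([v_i]_\alpha,[v_i]_{\alpha-h})^p$ is pointwise bounded by $(2H([v_i]_0,\{0\}))^p$), take $\delta=\min_i\delta(v_i,\varepsilon/3)$, and transfer via the triangle inequality to every $u\in U$. For (iii), compactness provides a subsequence $u_{k_j}\to u_0\in U\subseteq E^m$ in $d_p$; a cut-wise argument, using that $d_p$-convergence implies $H$-convergence of cuts on a co-null set of levels and that $\alpha\mapsto[u]_\alpha$ is left-continuous in $H$ for every $u\in E^m$, yields $u_{k_j}^{(h)}\to u_0^{(h)}$ in $d_p$ for every $h>0$, and uniqueness of $d_p$-limits identifies $u(h)=u_0^{(h)}$.

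For \textbf{sufficiency}, let $\{u_k\}\subset U$; I aim to extract a $d_p$-convergent subsequence whose limit, by closedness of $U$, will lie in $U$. The first key observation is that hypothesis (ii) implies $d_p(u_k,u_k^{(h)})\to 0$ uniformly in $k$ as $h\to 0^+$, so via a diagonal argument it suffices to produce a single subsequence $\{u_{k_j}\}$ such that $u_{k_j}^{(h_n)}$ converges in $d_p$ to some $u(h_n)\in E^m$ along a fixed sequence $h_n\downarrow 0$. For each fixed $h>0$, monotonicity of $\alpha\mapsto H([u_k]_\alpha,\{0\})$ combined with (i) gives the uniform bound $H([u_k]_h,\{0\})\le Mh^{-1/p}$, so all zero-cuts of the family $\{u_k^{(h)}\}$ sit in a common closed ball of $\mathbb{R}^m$; Proposition \ref{sca} then yields cut-wise $H$-compactness, and a diagonal extraction over a countable dense set of levels, promoted by (ii) to $d_p$-convergence of the truncated sequence, delivers the desired $u(h_n)$.

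At this point hypothesis (iii) is used exactly once: it furnishes a single $u_0\in E^m$ with $u_0^{(h_n)}=u(h_n)$ for all $n$. The three-term triangle inequality
\[
d_p(u_{k_j},u_0)\le d_p(u_{k_j},u_{k_j}^{(h_n)})+d_p(u_{k_j}^{(h_n)},u_0^{(h_n)})+d_p(u_0^{(h_n)},u_0)
\]
then closes the argument: (ii) bounds the first term uniformly in $k_j$ as $n\to\infty$, the extracted subsequence handles the middle term for each fixed $n$, and individual $p$-mean left-continuity of $u_0$ handles the third. The main obstacle I anticipate is the cut-to-$d_p$ promotion inside the sufficiency step: level-wise $H$-convergence on a dense set must be upgraded to $L^p$-convergence of the Hausdorff-distance function, and it is precisely $p$-mean equi-left-continuity that makes this promotion possible, playing the role equicontinuity plays in the Arzel\`{a}--Ascoli theorem.
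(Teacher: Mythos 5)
You are proving Ma's original characterization, which this paper quotes only as a historical record: immediately after stating Proposition \ref{mgep}, the paper (following Wu and Zhao \cite{wu2}) asserts that Propositions \ref{dempc}, \ref{dsm0pc} and \ref{mgep} are \emph{all wrong}, and replaces \ref{mgep} by the corrected Proposition \ref{gep}. So there is no proof in the paper to compare against, and any complete proof attempt must contain an error. Yours does, and it sits exactly where the proposition itself fails: the necessity of condition (\romannumeral3). You claim that $d_p$-convergence $u_{k_j}\to u_0$ together with left-continuity of $\alpha\mapsto[u_0]_\alpha$ yields $u_{k_j}^{(h)}\to u_0^{(h)}$ for \emph{every} $h>0$. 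This is false. $d_p$-convergence gives $H([u_{k_j}]_\alpha,[u_0]_\alpha)\to 0$ only for almost every $\alpha$; at an exceptional level $h$ (one where $\overline{\{u_0>h\}}\subsetneq[u_0]_h$) the cuts $[u_k]_h$ can converge to a set $C$ with $\overline{\{u_0>h\}}\subseteq C\subsetneq[u_0]_h$. By Lemma \ref{dpce} the limit $u(h)$ of $u_k^{(h)}$ then has $[u(h)]_\gamma=C$ for $\gamma\le h$, whereas $[u_0^{(h)}]_\gamma=[u_0]_h$, so $u(h)\ne u_0^{(h)}$; worse, demanding a single $u_0$ with $u_0^{(h)}=u(h)$ for all $h$ forces $[u_0]_h=\lim_k[u_k]_h$ for every $h$, and the right-hand side need not be left-continuous in $h$ while the left-hand side must be. The example in Section 7 makes this concrete: for the odd-indexed subsequence $\{u_{2n-1}\}$ there, $u_{2n-1}\to u_0$ in $d_p$ and $u_{2n-1}^{(h)}$ converges for every $h>0$, but the limit at $h=1/3$ has $1/3$-cut $[0,1/3]$ while $[u_0]_{1/3}=[0,1]$; hence the compact set $\{u_{2n-1}: n\in\mathbb{N}\}\cup\{u_0\}$ violates (\romannumeral3), and no argument can establish its necessity.

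Your sufficiency argument is essentially the machinery of Theorems \ref{pcn} and \ref{tcn} (conditions (\romannumeral1)--(\romannumeral2) give total boundedness, and the truncations $u^{(h_n)}$ control the tail uniformly via (\romannumeral2)), and it would go through with the corrected condition (\romannumeral3) of Proposition \ref{gep}, which only demands $[u_0^{(r_i)}]_\alpha=[u(r_i)]_\alpha$ for $\alpha>r_i$ along a fixed sequence $r_i\downarrow 0$. Even there, though, your invocation of Ma's (\romannumeral3) is shaky: its hypothesis asks for convergence of $u_k^{(h)}$ for all $h>0$, while your diagonal extraction only produces convergence along a countable sequence $h_n$, so the hypothesis is never actually verified. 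Both defects are exactly what the reformulation in Proposition \ref{gep} repairs; within this paper, the clean statement is Theorem \ref{smpcn} together with the observation at the end of Section 7 that the corrected condition (\romannumeral3) amounts to $\overline{U}\subset E^m$.
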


Wu and Zhao \cite{wu2} pointed out there still exists fault
 in compactness characterization given by
Ma \cite{ma}. They showed that   Propositions \ref{dempc}, \ref{dsm0pc} and \ref{mgep} are all wrong
and
then
gave
right forms of
compactness criteria for $(E^m, d_p)$ and $(S^m_0, d_p)$.
\begin{pp}
\cite{wu2} \label{gep}
  A closed set $U$ of $(E^m, d_p)$ is compact if and only if:
\\
(\romannumeral1) $U$ is uniformly $p$-mean bounded;
\\
(\romannumeral2) $U$ is $p$-mean equi-left-continuous;
\\
(\romannumeral3) Let $r_i$ be a decreasing sequence in $(0,1]$
converging to zero.
For $\{u_k\} \subset U$,   if $\{u_k^{(r_i)} : k=1,2,\ldots \}$
converges to
$u(r_i) \in E^m $ in $d_p$ metric, then there exists a $u_0 \in E^m$
such that
$[u_0^{(r_i)}]_\al=[u(r_i)]_\al$ when $r_i < \alpha \leq 1$.
\end{pp}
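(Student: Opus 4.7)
The plan is to prove both directions, with sufficiency carrying the substance.

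\emph{Necessity.} Condition (i) is immediate, compact sets being $d_p$-bounded. For (ii), cover $U$ by finitely many $d_p$-balls of radius $\varepsilon/3$ around $u_1,\ldots,u_N\in U$; each individual $u_j\in E^m$ is $p$-mean left-continuous since the representation $[u_j]_\alpha=\bigcap_{\gamma<\alpha}[u_j]_\gamma$ of Proposition \ref{nr}(ii) together with Proposition \ref{mce} gives $H([u_j]_\alpha,[u_j]_{\alpha-h})\to 0$ pointwise as $h\to 0^+$, and the dominating bound $2H([u_j]_0,\{0\})$ is integrable, so Lebesgue's dominated convergence theorem applies. Taking $\delta=\min_j\delta_j$ and combining with Minkowski's inequality and the substitution $\beta=\alpha-h$ then yields equi-left-continuity. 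For (iii), compactness lets me extract $u_{k_j}\to u_0\in U$ in $d_p$; a triangle-inequality argument using (ii) shows $u_{k_j}^{(r_i)}\to u_0^{(r_i)}$ in $d_p$, so $[u_0^{(r_i)}]_\alpha=[u(r_i)]_\alpha$ for $r_i<\alpha\le 1$ by uniqueness of $d_p$-limits.

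\emph{Sufficiency.} Given $\{u_n\}\subset U$, I seek a $d_p$-convergent subsequence. Monotonicity of the cut-map $\alpha\mapsto[u]_\alpha$ and (i) yield
\[
\alpha\, H([u_n]_\alpha,\{0\})^p\le\int_0^\alpha H([u_n]_\beta,\{0\})^p\,d\beta\le M^p \qquad(\alpha>0),
\]
so $\{[u_n]_\alpha\}$ is $H$-bounded for every positive $\alpha$. By Proposition \ref{sca} and a diagonal extraction over $\mathbb{Q}\cap(0,1]$, one may pass to a subsequence $\{u_{n_k}\}$ along which $[u_{n_k}]_q\to V_q$ in $H$ for every rational $q$. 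Fixing a decreasing rational sequence $r_i\downarrow 0$, I claim that for each $i$ the truncations $u_{n_k}^{(r_i)}$ converge in $d_p$ to some $u(r_i)\in E^m$: the integrand $H([u_{n_k}^{(r_i)}]_\alpha,[u_{n_\ell}^{(r_i)}]_\alpha)$ equals $H([u_{n_k}]_\alpha,[u_{n_\ell}]_\alpha)$ for $\alpha\ge r_i$ and $H([u_{n_k}]_{r_i},[u_{n_\ell}]_{r_i})$ for $\alpha<r_i$, and (ii) together with the pointwise bound above supply an integrable dominant so that Lebesgue's dominated convergence forces $\{u_{n_k}^{(r_i)}\}$ to be $d_p$-Cauchy; the family $\{V_q:q\ge r_i\}$, extended to irrational levels by intersections, yields $u(r_i)\in E^m$ via the representation Theorem \ref{rep}. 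Hypothesis (iii) now produces $u_0\in E^m$ with $[u_0^{(r_i)}]_\alpha=[u(r_i)]_\alpha$ for $r_i<\alpha\le 1$.

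To finish, I show $d_p(u_{n_k},u_0)\to 0$ via the identity $d_p(u,u^{(r)})^p=\int_0^r H([u]_\alpha,[u]_r)^p\,d\alpha$. An equi-continuity-style argument using (ii) gives $\sup_n d_p(u_n,u_n^{(r_i)})\to 0$ as $i\to\infty$, and $d_p(u_0,u_0^{(r_i)})\to 0$ as $i\to\infty$ since $u_0\in E^m$ is itself $p$-mean left-continuous; given $\varepsilon>0$, choose $i$ large enough to make both of these at most $\varepsilon/3$, then $k$ large so that $d_p(u_{n_k}^{(r_i)},u_0^{(r_i)})<\varepsilon/3$, and conclude by the triangle inequality. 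The main obstacle is extracting genuine $d_p$-convergence of the truncations from pointwise $H$-convergence of cuts at rational levels: one must carefully combine the $p$-mean equi-left-continuity with the monotonicity-based estimate $H([u_n]_\alpha,\{0\})\le M\alpha^{-1/p}$ to assemble a uniform integrable dominant. Condition (iii) then plays exactly the role of stitching the coherent family $\{u(r_i)\}$ into a single limit $u_0\in E^m$, ruling out the escape-to-infinity phenomenon (an unbounded $0$-cut in the limit) which could otherwise occur absent this hypothesis.
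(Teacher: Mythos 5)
The paper itself does not prove Proposition \ref{gep}; it quotes it from \cite{wu2} and, in Section 7, re-derives it from Theorem \ref{pcn} by showing that under (\romannumeral1) and (\romannumeral2) condition (\romannumeral3) is equivalent to $\overline{U}\subset E^m$. Your architecture --- necessity via a finite $\varepsilon/3$-net together with $p$-mean left-continuity of each individual fuzzy number, sufficiency via the bound $H([u_n]_\alpha,\{0\})\le M\alpha^{-1/p}$, a diagonal extraction over rational levels, and a final stitching estimate --- is essentially the proof of Theorem \ref{pcn} specialized to $E^m$, with (\romannumeral3) supplying membership of the limit in $E^m$. That overall route is sound.

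There is, however, a genuine gap: in two load-bearing places you treat the truncation map $u\mapsto u^{(r)}$ as $d_p$-continuous, and it is not --- this is exactly the point of Lemma \ref{dpce} and of the Example in Section 7. In the necessity of (\romannumeral3) you assert that $u_{k_j}\to u_0$ forces $u_{k_j}^{(r_i)}\to u_0^{(r_i)}$; in the sufficiency you choose $k$ so that $d_p(u_{n_k}^{(r_i)},u_0^{(r_i)})<\varepsilon/3$, which tacitly identifies $u(r_i)$ with $u_0^{(r_i)}$. Both fail because $d_p$-convergence controls the cuts only almost everywhere, while $[u_0]_{r_i}=\bigcap_{\gamma<r_i}[u_0]_\gamma$ is forced to be the left limit; hence $[u(r_i)]_{r_i}=\lim_k[u_{n_k}]_{r_i}$ can be a proper subset of $[u_0]_{r_i}$. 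Concretely, let $[u_n]_\alpha=[0,1]$ for $\alpha\le r-1/n$ and $=\{0\}$ otherwise: then $u_n\to u_0$ in $d_p$ with $[u_0]_\alpha=[0,1]$ for $\alpha\le r$, yet $u_n^{(r)}\equiv\widehat{0}$ while $d_p(\widehat{0},u_0^{(r)})=r^{1/p}>0$, so neither claimed convergence holds. Both occurrences are repairable: the conclusion of (\romannumeral3) concerns only levels $\alpha>r_i$, where almost-everywhere equality of the cuts of $u_0$ and $u(r_i)$ plus left-continuity of the cut maps gives equality everywhere; and in the sufficiency the triangle inequality should pass through $u(r_i)$ rather than $u_0^{(r_i)}$, with $d_p(u(r_i),u_0)$ controlled by Fatou's lemma applied to $d_p(u_{n_k},u_{n_k}^{(r_i)})\le\varepsilon/3$ (this is precisely the chain of estimates in Step 2 of the proof of Theorem \ref{pcn}). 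A smaller gap of the same flavor: your dominated-convergence argument for the $d_p$-Cauchyness of the truncations needs $H([u_{n_k}]_\alpha,[u_{n_\ell}]_\alpha)\to0$ for almost every $\alpha\in[r_i,1]$, not merely for rational $\alpha$; the paper fills this by showing that the set of levels at which $\bigcap_{q<\alpha}V_q$ fails to be the Hausdorff limit is at most countable.
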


\begin{pp} \cite{wu2} \label{gs0p}
  A closed set $U$ of $(S^m_0, d_p)$ is compact if and only if:
\\
(\romannumeral1) $U$ is uniformly $p$-mean bounded;
\\
(\romannumeral2) $U$ is $p$-mean equi-left-continuous;
\\
(\romannumeral3) Let $r_i$ be a decreasing sequence in $(0,1]$
converging to zero.
For $\{u_k\} \subset U$,   if $\{u_k^{(r_i)} : k=1,2,\ldots \}$
converges to
$u(r_i) \in S^m_0 $ in $d_p$ metric, then there exists a $u_0 \in S^m_0$
such that
$[u_0^{(r_i)}]_\al=[u(r_i)]_\al$ when $r_i < \alpha \leq 1$.
\end{pp}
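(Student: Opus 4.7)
The plan is to prove both directions by exploiting the fact that $d_p$ is a genuine metric on $S^m_0$ and hence compactness is equivalent to sequential compactness (or, equivalently, to being closed and totally bounded).

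For the necessity direction, assume $U$ is compact. Then $U$ is bounded in $(S^m_0,d_p)$, giving (i). To obtain (ii), I would argue by contradiction: if the $p$-mean left-continuity fails to be uniform, there exist $\varepsilon_0>0$, $h_n \downarrow 0$, and $u_n \in U$ with $\int_{h_n}^{1}H([u_n]_\alpha,[u_n]_{\alpha-h_n})^p\,d\alpha \ge \varepsilon_0^p$. By compactness, after passing to a subsequence, $u_n \to u^* \in U$ in $d_p$. Using the triangle inequality in $L_p$ (with shifts in the $\alpha$ variable) and the fact that $u^*$ is itself $p$-mean left-continuous, one derives a contradiction. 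For (iii), given $\{u_k\}\subset U$ with $u_k^{(r_i)}\to u(r_i)$ for each $i$, compactness produces a further subsequence with $u_k \to u_0 \in S^m_0$; then the continuity of the cut operation $v\mapsto v^{(r_i)}$ in $d_p$ (which follows from Proposition \ref{mce}-type monotone arguments applied to $\alpha$-cuts) forces $[u_0^{(r_i)}]_\alpha = [u(r_i)]_\alpha$ for $r_i<\alpha\le 1$.

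For the sufficiency direction, let $\{u_k\}\subset U$. The main task is to extract a $d_p$-convergent subsequence. I would pick a fixed decreasing sequence $r_i \downarrow 0$ in $(0,1]$ and carry out a diagonal construction. Fix $i$; by condition (i) and Proposition \ref{sca} together with Proposition \ref{kcs} (so that $K_S(\mathbb{R}^m)$ is closed in $K(\mathbb{R}^m)$ by Theorem \ref{ksc}), the cut sets $\{[u_k^{(r_i)}]_\alpha\}$ are uniformly bounded; using equi-left-continuity (ii), one shows that $\{u_k^{(r_i)}\}_{k}$ is totally bounded in $d_p$. Indeed, for each $i$ one has $u_k^{(r_i)} \in S^m_0$ with cuts contained in a fixed ball and with a uniform $p$-mean modulus of left-continuity, and a standard $\alpha$-discretization together with finite coverings of $(K(\mathbb{R}^m),H)$ (Proposition \ref{sca}) yields a finite $\varepsilon$-net in $d_p$. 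By a diagonal argument, pass to a subsequence (still called $\{u_k\}$) so that for every $i$, $u_k^{(r_i)} \to u(r_i)$ in $d_p$ as $k \to \infty$, where $u(r_i) \in S^m_0$ (using that $(S^m_0,d_p)$ is closed under $d_p$-limits of $p$-mean equi-left-continuous sequences via Theorem \ref{ksc} and Proposition \ref{mce}). Now condition (iii) supplies a single $u_0 \in S^m_0$ with $[u_0^{(r_i)}]_\alpha=[u(r_i)]_\alpha$ for $r_i<\alpha\le 1$.

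It remains to prove $d_p(u_k,u_0)\to 0$. The key estimate splits the integral: for fixed $i$,
\begin{equation*}
d_p(u_k,u_0)^p = \int_0^{r_i} H([u_k]_\alpha,[u_0]_\alpha)^p\,d\alpha + \int_{r_i}^{1} H([u_k]_\alpha,[u_0]_\alpha)^p\,d\alpha.
\end{equation*}
On $[r_i,1]$, the integrand equals $H([u_k^{(r_i)}]_\alpha,[u_0^{(r_i)}]_\alpha)^p = H([u_k^{(r_i)}]_\alpha,[u(r_i)]_\alpha)^p$, whose integral tends to $0$ as $k\to\infty$ by the choice of the diagonal subsequence. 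The tail integral on $[0,r_i]$ is controlled by condition (ii): the uniform $p$-mean left-continuity of $U$, together with the same property transferred to $u_0$ (which is obtained as a $d_p$-limit of elements satisfying (ii)), shows that the tail is uniformly small once $r_i$ is chosen small enough. Combining these and letting first $k\to\infty$ and then $i\to\infty$ gives $u_k \to u_0$ in $d_p$. Since $U$ is closed, $u_0\in U$, proving sequential compactness.

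The hard part will be the tail estimate on $[0,r_i]$: one must deduce smallness of $\int_0^{r_i} H([u_k]_\alpha,[u_0]_\alpha)^p\,d\alpha$ uniformly in $k$ from $p$-mean equi-left-continuity at $\alpha=0$, and this requires carefully connecting the values of $[u_k]_\alpha$ for small $\alpha$ with $[u_k]_\beta$ for $\beta$ slightly larger, using star-shapedness with respect to $0$ (so that $\lambda\,[u_k]_\beta \subseteq [u_k]_\beta$-type inclusions hold) to relate Hausdorff distances at different $\alpha$-levels.
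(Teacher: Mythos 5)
First, note that the paper does not prove Proposition \ref{gs0p} at all: it is quoted from \cite{wu2}. What the paper does instead (Section 7) is recover it as a corollary of its own machinery: Theorem \ref{pcn} shows that (\romannumeral1)+(\romannumeral2) characterize relative compactness in the completion $(S_0^{m,p},d_p)$, and then Remark \ref{subc} together with Corollary \ref{crm} shows that, under (\romannumeral1) and (\romannumeral2), condition (\romannumeral3) is equivalent to $\overline{U}\subset S^m_0$. Your proposal is instead a direct proof in the spirit of the original Wu--Zhao argument (truncate at levels $r_i$, diagonalize, reassemble). That route is legitimate, but as written it contains one step that is actually false.

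The false step is in the necessity of (\romannumeral3): you invoke ``the continuity of the cut operation $v\mapsto v^{(r_i)}$ in $d_p$.'' This operation is \emph{not} $d_p$-continuous; the example following Lemma \ref{dpce} exhibits a sequence $u_n\to u_0$ in $(E^1,d_p)$ for which $\{u_n^{(1/3)}\}$ diverges, and Lemma \ref{dpce} makes precise why: convergence of $u_k^{(\al)}$ forces the extra conditions $H([u_k]_\al,[u]_\al)\to 0$ and constancy of $[u]_\gamma$ for $\gamma\le\al$, which a $d_p$-limit need not satisfy at the level $\al=r_i$. The conclusion you want is still reachable, but only because it concerns $\al>r_i$: from $d_p(u_{k_j},u_0)\to 0$ one gets $H([u_{k_j}]_\al,[u_0]_\al)\to 0$ for a.e.\ $\al$, from $d_p(u_{k_j}^{(r_i)},u(r_i))\to 0$ one gets the same with $[u(r_i)]_\al$ for a.e.\ $\al\in(r_i,1]$, hence $[u_0]_\al=[u(r_i)]_\al$ a.e.\ on $(r_i,1]$, and the left-continuity $[u]_\la=\bigcap_{\gamma<\la}[u]_\gamma$ upgrades this to every $\al\in(r_i,1]$. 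You must argue via this a.e.\ identification, not via a continuity that fails. Two smaller points: in the sufficiency direction your closing remark that the tail estimate on $[0,r_i]$ needs star-shapedness with respect to $0$ is a red herring --- the shift trick $\int_0^h H([u_k]_\al,[u_0]_\al)^p\,d\al \le$ (three terms obtained by inserting $[u_k]_{\al+h}$ and $[u_0]_{\al+h}$ and changing variables) works for arbitrary fuzzy sets and is exactly how Step 2 of the proof of Theorem \ref{pcn} handles it; and your sufficiency argument must also verify that the diagonal limits $u(r_i)$ land in $S^m_0$, since otherwise hypothesis (\romannumeral3) is vacuous for that sequence --- you note this parenthetically, but it is a load-bearing step. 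Compared with your direct route, the paper's approach buys a cleaner statement (conditions (\romannumeral1)--(\romannumeral2) are identified as totally boundedness, cf.\ Theorem \ref{tcn}, and (\romannumeral3) as $\overline{U}\subset S^m_0$) at the cost of first building the completion theory.
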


Based on Proposition \ref{gs0p},
 Zhao and Wu \cite{zhao} further presented
 compactness criteria of $(S^m, d_p)$.

\begin{pp}\cite{zhao}\label{gsp}
  A closed set $U$ of $(S^m, d_p)$ is compact if and only if:
\\
(\romannumeral1) $U$ is uniformly $p$-mean bounded;
\\
(\romannumeral2) $U$ is $p$-mean equi-left-continuous;
\\
(\romannumeral3) Let $r_i$ be a decreasing sequence in $(0,1]$
converging to zero.
For $\{u_k\} \subset U$,   if $\{u_k^{(r_i)} : k=1,2,\ldots \}$
converges to
$u(r_i) \in S^m $ in $d_p$ metric, then there exists a $u_0 \in S^m$
such that
$[u_0^{(r_i)}]_\al=[u(r_i)]_\al$ when $r_i < \alpha \leq 1$.
\end{pp}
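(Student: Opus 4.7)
The proposition has both a necessity and a sufficiency direction; I would first dispatch necessity, which is largely routine, and then concentrate the work on sufficiency.

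For \emph{necessity}: condition (i) is immediate, since compact subsets of a metric space are bounded. For (ii), I would use total boundedness of $U$: given $\varepsilon>0$, cover $U$ by finitely many $d_p$-balls of radius $\varepsilon/3$ centered at $v_1,\dots,v_n\in U$; each $v_j$ is individually $p$-mean left-continuous with some modulus $\delta_j$, so taking $\delta=\min_j\delta_j$ and combining via the triangle inequality yields the equi-version. For (iii), suppose $\{u_k\}\subset U$ and $u_k^{(r_i)}\to u(r_i)$ in $d_p$ for every $i$. By compactness of $U$, extract a subsequence $u_{k_j}\to u_0\in U\subset S^m$ in $d_p$. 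Along a further subsequence, one has Hausdorff convergence of $\alpha$-cuts almost everywhere in $\alpha$, and for $\alpha>r_i$ the truncation leaves the $\alpha$-cut unchanged, so $[u_0]_\alpha=[u(r_i)]_\alpha$ on a set of full measure in $(r_i,1]$; the left-continuity of $\alpha\mapsto[u_0]_\alpha$ (Theorem \ref{rslp}(ii)) then upgrades this equality to all $\alpha\in(r_i,1]$.

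For \emph{sufficiency}, the strategy is to extract a $d_p$-convergent subsequence of an arbitrary $\{u_k\}\subset U$ whose limit $u_0$ lies in $S^m$; closedness of $U$ will then place $u_0$ in $U$. Fix a decreasing sequence $r_i\downarrow 0$. For each fixed $r_i$, the sequence $\{u_k^{(r_i)}\}$ has uniformly bounded supports: since $\alpha\mapsto H([u_k]_\alpha,\{0\})$ is non-increasing, condition (i) gives $r_i\cdot H([u_k]_{r_i},\{0\})^p\le \int_0^{r_i}H([u_k]_\alpha,\{0\})^p\,d\alpha\le M^p$, so $[u_k^{(r_i)}]_0=[u_k]_{r_i}$ sits inside a fixed compact ball of $\mathbb{R}^m$. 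Combining this with the $p$-mean equi-left-continuity inherited from (ii) (and the constancy of the truncated cuts on $[0,r_i]$), I would run an Arzelà–Ascoli type extraction on the cut maps $\alpha\mapsto[u_k^{(r_i)}]_\alpha$ valued in $(K_S(\mathbb{R}^m),H)$: use compactness of a common ambient compact in $(K(\mathbb{R}^m),H)$ (Proposition \ref{sca}) to get Hausdorff convergence at each rational $\alpha$, diagonalize, and upgrade to $d_p$-convergence via (ii). Theorem \ref{ksc} keeps the limit cuts star-shaped, and Corollary \ref{kef} provides a common kernel point for each $u(r_i)$, so $u(r_i)\in S^m$. A further diagonal extraction produces one subsequence $\{u_{k_j}\}$ with $u_{k_j}^{(r_i)}\to u(r_i)$ in $d_p$ for every $i$.

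Now I would apply condition (iii) to obtain $u_0\in S^m$ with $[u_0^{(r_i)}]_\alpha=[u(r_i)]_\alpha$ for $r_i<\alpha\le 1$, and verify $d_p(u_{k_j},u_0)\to 0$ through the splitting
\[
d_p(u_{k_j},u_0)^p=\int_0^{r_i}H([u_{k_j}]_\alpha,[u_0]_\alpha)^p\,d\alpha+\int_{r_i}^1H([u_{k_j}]_\alpha,[u_0]_\alpha)^p\,d\alpha,
\]
where the tail integral is controlled by (ii) applied at scale $r_i$ (together with uniform boundedness), and the main integral is controlled by the $d_p$-convergence $u_{k_j}^{(r_i)}\to u(r_i)$ on $(r_i,1]$, on which the truncation agrees with the original. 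I expect the real obstacle to be the sufficiency, specifically verifying that the extracted partial limits $u(r_i)$ really lie in $S^m$ and that condition (iii) succeeds in stitching them into a single $u_0\in S^m$ with a common kernel across \emph{all} $\alpha$-cuts; this is precisely the point where the $(S^m,d_p)$ case diverges from the $(S^m_0,d_p)$ case of Proposition \ref{gs0p}, since in $S^m_0$ the origin is forced to be a common kernel point and no analogue of (iii) is needed to enforce the common-kernel property.
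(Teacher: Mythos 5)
Your proof is correct and follows essentially the route the paper itself takes: the paper states Proposition \ref{gsp} as a cited result from \cite{zhao}, but the machinery it builds --- the diagonal extraction over rational levels together with Fatou and dominated convergence in Theorem \ref{pcn}, the preservation of star-shapedness and kernels via Theorem \ref{ksc}, Corollary \ref{kef} and Theorem \ref{sgmcde}, and the Section 7 reinterpretation of condition (\romannumeral3) as $\overline{U}\subset S^m$ --- is exactly what you reconstruct. The one phrase worth tightening is that the tail integral $\int_0^{r_i}$ is controlled not by uniform boundedness but by the three-term splitting of \eqref{ean} (equi-left-continuity of the sequence, left-continuity of the limit obtained via Fatou's lemma, and the already-established convergence on $[r_i,1]$), which your sketch otherwise captures correctly.
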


Compare Propositions \ref{gep}, \ref{gs0p}, \ref{gsp}
with
Arzel\`{a}--Ascoli theorem,
we notice that
the latter
provides the compactness criteria by characterizing the totally bounded set
 while the former does not seem to do so.
Since totally boundedness   is a key feature of compactness,
it
is
a basic and important problem
to
consider characterizations of totally bounded sets in fuzzy sets spaces.
In order to obtain characterization of totally bounded sets in
$(F_{B}   (  \mathbb{R}^m )^p, d_p)$,
we
first give
a
characterization of
relatively compact sets in $(F_{B}   (  \mathbb{R}^m )^p, d_p)$.

Some fundamental conclusions and concepts in classic analysis and topology
are listed below, which are useful in this paper.
 The readers can see \cite{bridges} for details.

\begin{itemize}
\item \textbf{Lebesgue's Dominated Convergence Theorem}.
 Let $\{f_n\}$ be a sequence of integrable functions
that
 converges almost everywhere to a function $f$,
 and
suppose that  $\{f_n\}$ is dominated by an integrable function $g$.
Then $f$ is integrable,
 and
$ \int f = \lim_{n\to\infty} \int f_n$.

\item \textbf{Fatou's Lemma}.
 Let $\{f_n\}$ be a sequence of nonnegative integrable functions
that
 converges almost everywhere to a function $f$,
and
if the sequence $\{\int f_n\}$ is bounded above,
then
$f$ is integrable and
$\int f \leq   \liminf \int f_n$

\item
\textbf{Absolute continuity of Lebesgue integral}.
 Suppose
that
$f$ is Lebesgue integrable on $E$, then
for arbitrary $\varepsilon>0$,
there is a $\delta>0$
such that
$\int_A  f \;dx < \varepsilon$
 whenever $A \subseteq  E$ and $m(A)<\delta$.

\item
\textbf{Minkowski's inequality}.
Let $p \geq 1$, and let $f,g$ be measurable functions on $\mathbb{R}$
such that
$|f|^p$ and $|g|^p$ are integrable.
Then
$|f+g|^p$ is integrable, and Minkowski's inequality
$$ \left( \int |f+g|^p   \right)^{1/p}   \leq   \left( \int |f|^p   \right)^{1/p}  +     \left( \int |g|^p   \right)^{1/p} $$
holds.

  \item  A relatively compact     subset $Y$ of a topological space X is a subset whose closure is compact.
 In the case of a metric topology, 
the criterion for relative compactness becomes that any sequence in $Y$ has a subsequence convergent in $X$.

  \item Let $(X,d)$ be a metric space.
A
 set $U$ in $X$ is totally bounded
if and only if
for each $\varepsilon>0$,
it contains a finite $\varepsilon$-approximation,
where
an $\varepsilon$-approximation to $U$ is a subset $S$ of $U$
such that
$\rho(x,S)<\varepsilon$
for each $x\in U$.

\item Let $(X,d)$ be a metric space.
Then
a set $U$ in $X$ is relatively compact implies that it is  totally bounded.
For subsets of a complete metric space these two meanings coincide.
Thus
 $(X,d)$ is a compact space iff $X$ is  totally bounded and complete.
 \end{itemize}

We start with some lemmas which discuss some properties of bounded subsets and elements of $(F_{B}   (  \mathbb{R}^m )^p, d_p)$.

\begin{lm}\label{Unes}
  Suppose that $U$ is a bounded set in $(F_{B}   (  \mathbb{R}^m )^p, d_p)$,
  then
  $\{[u]_\al:  u\in U\}$ is a bounded set in $(K(\mathbb{R}^m), H)$ for each $\alpha>0$.
\end{lm}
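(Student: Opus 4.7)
The plan is to translate the bound on $d_p(u, \widehat{0})$ into a bound on $H([u]_\alpha, \{0\})$ by exploiting the monotonicity of the $\alpha$-cut map. Since $U$ is bounded in $(F_B(\mathbb{R}^m)^p, d_p)$, there exists $M>0$ with
\[
d_p(u, \widehat{0})
= \left( \int_0^1 H([u]_\beta, \{0\})^p \, d\beta \right)^{1/p} \leq M
\]
for all $u \in U$.

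The key observation is that for any $u \in F(\mathbb{R}^m)$ and $0 < \beta \leq \alpha$, we have $[u]_\alpha \subseteq [u]_\beta$. Since $H(A, \{0\}) = \sup_{a \in A} \|a\|$, containment of compact sets yields $H([u]_\alpha, \{0\}) \leq H([u]_\beta, \{0\})$ whenever $\beta \leq \alpha$. Thus on the interval $(0, \alpha)$ the integrand $H([u]_\beta, \{0\})^p$ dominates the constant $H([u]_\alpha, \{0\})^p$.

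Integrating over $\beta \in (0, \alpha)$ gives
\[
M^p
\geq \int_0^\alpha H([u]_\beta, \{0\})^p \, d\beta
\geq \alpha \cdot H([u]_\alpha, \{0\})^p,
\]
so $H([u]_\alpha, \{0\}) \leq M/\alpha^{1/p}$ for every $u \in U$. This uniform bound says that $\{[u]_\alpha : u \in U\}$ lies in the closed ball of radius $M/\alpha^{1/p}$ around $\{0\}$ in $(K(\mathbb{R}^m), H)$, which is precisely the claim.

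There is no real obstacle here; the only minor care required is noting that $H$ with singleton $\{0\}$ collapses to the max-norm radius, so that set inclusion of cuts transfers cleanly into an inequality of Hausdorff distances to the origin.
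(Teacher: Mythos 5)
Your proof is correct and rests on exactly the same inequality as the paper's, namely $\int_0^1 H([u]_\beta,\{0\})^p\,d\beta \geq \alpha\, H([u]_\alpha,\{0\})^p$, obtained from the monotonicity of the $\alpha$-cuts; the only difference is that you argue directly to get the explicit bound $M/\alpha^{1/p}$, whereas the paper phrases it as a proof by contradiction. Both are fine.
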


\begin{proof}  \ If
 there exists an $\al_0>0$ such that
$\{[u]_{\al_0}:  u\in U\}$ is   not a bounded set in $(K(\mathbb{R}^m), H)$.
Then
there is a $u\in U$ such that
$[u]_{\al_0} \notin K(\mathbb{R}^m)$
or
$\{[u]_{\al_0}:  u\in U\}$ is a unbounded set in $ K(\mathbb{R}^m)$.
For both cases,
there
exist $u_n\in U$
such
that
$H([u_n]_{\al_0},    \{0\})  >   n\cdot (\frac{1}{\al_0})^{1/p}$
when $n=1,2,\ldots$,
and hence
$ \left( \int_0^1   H( [u_n]_{\al},    \{0\}  )^p  \; d\alpha  \right)^{1/p}  > n,$
which
contradicts
the
boundness
of
$U$.
\end{proof}

\begin{lm} \label{lcf}
  Suppose that $u\in F_{B}   (  \mathbb{R}^m )^p$ and $\al\in (0,1]$,
  then
  $H([u]_\al, [u]_{\beta}) \to 0$ as $\beta\to \alpha-$.
\end{lm}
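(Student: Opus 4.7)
The plan is to use the representation theorem for $F_B(\mathbb{R}^m)^p$ (Theorem 3.11), which tells us that $[u]_\alpha = \bigcap_{\gamma<\alpha}[u]_\gamma$ for every $\alpha\in(0,1]$, combined with the monotone convergence property for nested compact sets (Proposition 2.3).

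First, I would fix $\alpha\in(0,1]$ and take an arbitrary increasing sequence $\beta_n \nearrow \alpha$ with $\beta_n \in (0,\alpha)$; since $\alpha>0$, this can always be arranged. By Remark 3.1 (or equivalently by Theorem 3.11(i)) each $[u]_{\beta_n}\in K(\mathbb{R}^m)$, and from $\beta_n<\beta_{n+1}$ we get the nested inclusions
\[
[u]_{\beta_1}\supseteq[u]_{\beta_2}\supseteq\cdots\supseteq [u]_\alpha.
\]
Next, I would identify the intersection: since $\{[u]_\gamma:\gamma<\alpha\}$ is a decreasing family (in $\gamma$) and $\beta_n\nearrow\alpha$, one checks
\[
\bigcap_{n=1}^{\infty}[u]_{\beta_n} = \bigcap_{\gamma<\alpha}[u]_\gamma = [u]_\alpha,
\]
where the last equality is precisely Theorem 3.11(ii). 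Proposition 2.3 applied to the decreasing chain $\{[u]_{\beta_n}\}$ then yields $H([u]_{\beta_n},[u]_\alpha)\to 0$.

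Finally, to upgrade from sequential convergence to the stated one-sided limit $\beta\to\alpha-$, I would observe the monotonicity $H([u]_\alpha,[u]_\beta)=H^*([u]_\beta,[u]_\alpha)$ as a function of $\beta\in(0,\alpha)$: if $0<\beta_1\leq\beta_2<\alpha$, then $[u]_{\beta_1}\supseteq[u]_{\beta_2}\supseteq[u]_\alpha$, so
\[
H^*([u]_{\beta_2},[u]_\alpha)=\sup_{x\in[u]_{\beta_2}}d(x,[u]_\alpha)\leq\sup_{x\in[u]_{\beta_1}}d(x,[u]_\alpha)=H^*([u]_{\beta_1},[u]_\alpha),
\]
i.e.\ $H([u]_\alpha,[u]_\beta)$ is nonincreasing as $\beta\nearrow\alpha$. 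Combined with the sequential conclusion above, this forces $H([u]_\alpha,[u]_\beta)\to 0$ as $\beta\to\alpha-$.

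There is no real obstacle here; the only point requiring a brief justification is the identification $\bigcap_n[u]_{\beta_n}=[u]_\alpha$, which is immediate from Theorem 3.11(ii) once one notes that any $\gamma<\alpha$ is dominated by some $\beta_n$. The use of Proposition 2.3 is exactly what turns nested intersection into Hausdorff convergence, and the monotonicity argument replaces sequential with directional limits.
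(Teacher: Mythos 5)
Your proof is correct and follows exactly the route the paper intends: the paper's one-line proof cites precisely the representation theorem for $F_{B}(\mathbb{R}^m)^p$ (giving $[u]_\alpha=\bigcap_{\gamma<\alpha}[u]_\gamma$) and Proposition \ref{mce} (Hausdorff convergence of nested compact sets), which is what you use. Your additional monotonicity step to pass from sequences to the one-sided limit is a sound way of filling in the detail the paper leaves implicit.
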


\begin{proof} \ The desired result
follows
immediately from
Theorem \ref{rfbp} and Proposition \ref{mce}.
 \end{proof}

\begin{lm} \label{uplc}
If  $u\in F_{B}   (  \mathbb{R}^m )^p$,
 then
  $u$ is $p-$mean left-continuous.
\end{lm}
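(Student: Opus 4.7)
The plan is to show $\int_h^1 H([u]_\alpha, [u]_{\alpha-h})^p\, d\alpha \to 0$ as $h \to 0^+$, which is equivalent to $u$ being $p$-mean left-continuous. Set $g(\alpha) := H([u]_\alpha, \{0\})$. Since the $\alpha$-cuts are nested ($[u]_\alpha \subseteq [u]_\beta$ whenever $\alpha \geq \beta$), the function $g$ is non-increasing on $[0,1]$, and $g \in L^p([0,1])$ by the assumption $u \in F_B(\mathbb{R}^m)^p$, i.e., condition (\romannumeral4-2). The strategy is to combine Lemma \ref{lcf} (for pointwise convergence) with the dominated convergence theorem on the bulk of the interval, and with absolute continuity of the Lebesgue integral near the left endpoint.

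The pointwise convergence is immediate: Lemma \ref{lcf} gives $H([u]_\alpha, [u]_{\alpha-h}) \to 0$ as $h \to 0^+$ for each fixed $\alpha \in (0,1]$. The main obstacle is producing an integrable dominant, because the natural triangle bound
\[
H([u]_\alpha, [u]_{\alpha-h})^p \leq 2^{p-1}\bigl(g(\alpha)^p + g(\alpha-h)^p\bigr)
\]
depends on $h$ through $g(\alpha-h)$, and $g$ may blow up as $\alpha \to 0^+$. I would handle this by splitting the region of integration at $2h$. On $[2h,1]$, monotonicity of $g$ yields $g(\alpha-h) \leq g(\alpha/2)$, so
\[
H([u]_\alpha, [u]_{\alpha-h})^p \leq 2^{p-1}\bigl(g(\alpha)^p + g(\alpha/2)^p\bigr) =: F(\alpha),
\]
and $F$ is integrable on $[0,1]$: a change of variable gives $\int_0^1 g(\alpha/2)^p\, d\alpha = 2\int_0^{1/2} g(\beta)^p\, d\beta \leq 2\|g\|_p^p < \infty$. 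Dominated convergence then yields $\int_{2h}^1 H([u]_\alpha, [u]_{\alpha-h})^p\, d\alpha \to 0$.

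On the remaining strip $(h, 2h)$ I would bypass DCT and instead invoke absolute continuity of the Lebesgue integral: since $g^p$ is integrable on $[0,1]$, the integrals $\int_h^{2h} g(\alpha)^p\, d\alpha$ and $\int_h^{2h} g(\alpha - h)^p\, d\alpha = \int_0^h g(\beta)^p\, d\beta$ each tend to $0$ as $h \to 0^+$ (the measures of the integration sets tend to $0$). Applying the triangle-plus-convexity bound once more bounds $\int_h^{2h} H([u]_\alpha, [u]_{\alpha-h})^p\, d\alpha$ by a sum of these two vanishing quantities. Combining the two pieces yields $\int_h^1 H([u]_\alpha, [u]_{\alpha-h})^p\, d\alpha \to 0$, which is the required $p$-mean left-continuity of $u$.
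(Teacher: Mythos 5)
Your proof is correct and follows essentially the same route as the paper's: both rest on Lemma \ref{lcf} for pointwise convergence, the dominated convergence theorem on the part of $[0,1]$ bounded away from $0$, and absolute continuity of the Lebesgue integral of $H([u]_\alpha,\{0\})^p$ together with the triangle inequality $H([u]_\alpha,[u]_{\alpha-h})\leq H([u]_\alpha,\{0\})+H([u]_{\alpha-h},\{0\})$ to kill the contribution near the left endpoint. The only (cosmetic) difference is that you split at the moving point $2h$ and dominate by $2^{p-1}\bigl(g(\alpha)^p+g(\alpha/2)^p\bigr)$ via monotonicity of $g$, whereas the paper splits at a fixed $h_1=h_1(\varepsilon)$ and dominates by a constant on $[h_1,1]$; both devices are sound.
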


\begin{proof}
 \ Given $\varepsilon>0$. Note that $u\in F_{B}   (  \mathbb{R}^m )^p$,
from the absolute continuity of Lebesgue integral,
we know
there exists an
$h_1>0$ such that
\begin{equation}\label{h1e}
  \left(\int_0^{h_1}    H([u]_\alpha, \{0\})^p \;d\alpha  \right)^{1/p}   \leq   \varepsilon/3.
\end{equation}
Note
that
  $[u]_{\frac{h_1}{2}} \in K(\mathbb{R}^m)$,
then
there exists an $M>0$ such that $H([u]_{\frac{h_1}{2}}, \{0\})<M$.
This
yields
 that
 \begin{equation}\label{ube}
    H([u]_\alpha,  [u]_{\alpha-h} )   \leq   H([u]_\alpha, \{0\})  +  H( [u]_{\alpha-h}, \{0\} ) \leq 2M
 \end{equation}
for all     $\alpha\geq h_1$ and $h \leq h_1/2$.

By Lemma \ref{lcf},we know that $H([u]_\alpha,    [u]_{\alpha-h}) \rightarrow 0$ when $h\to 0+$,
it
then
follows from the Lebesgue's dominated convergence theorem and \eqref{ube}
that
\begin{equation*}
\left(  \int_{h_1}^1     H([u]_\alpha,  [u]_{\alpha-h} )^p \;d\alpha \right)^{1/p}  \rightarrow   0
\end{equation*}
when $h \to 0+$. Thus there exists an $h_2>0$ such that
\begin{equation}\label{hug}
 \left(  \int_{h_1}^1     H([u]_\alpha,  [u]_{\alpha-h} )^p    \;d\alpha  \right)^{1/p}  <   \varepsilon/3
\end{equation}
for all $0 \leq h\leq \min\{h_1/2, h_2\}$.

Now combined \eqref{h1e} and \eqref{hug}, we know that, for all $h\leq h_3= \min\{h_1/2, h_2\}$,
\begin{align}
     & \left(\int_h^1   H([u]_\alpha,  [u]_{\alpha-h} )^p \;d\alpha   \right)^{1/p}    \nonumber
\\
   & \leq   \left(\int_h^{h_1}   H([u]_\alpha,  [u]_{\alpha-h} )^p \;d\alpha   \right)^{1/p}
     +
    \left(\int_{h_1}^{1}   H([u]_\alpha,  [u]_{\alpha-h} )^p \;d\alpha   \right)^{1/p}   \nonumber
     \\
  &\leq  \left(\int_h^{h_1}   H([u]_\alpha, \{0\} )^p \;d\alpha   \right)^{1/p}
      +
      \left(\int_h^{h_1}     H( [u]_{\alpha-h} ,   \{0\} )^p     \;d\alpha   \right)^{1/p}
      +
      \varepsilon/3                    \nonumber
  \\
   &\leq \left(\int_0^{h_1}   H([u]_\alpha, \{0\} )^p \;d\alpha   \right)^{1/p}
      +
      \left(\int_0^{h_1}     H( [u]_{\alpha} ,   \{0\} )^p     \;d\alpha   \right)^{1/p}
      +
      \varepsilon/3  \nonumber
    \\
     &\leq  \varepsilon/3  +   \varepsilon/3 +   \varepsilon/3 =\varepsilon.
\end{align}
From the arbitrariness of $\varepsilon$, we know that $u$ is $p-$mean left-continuous.
\end{proof}

Now, we arrive at the main results of this section.
The
following
theorem gives a characterization of relatively compactness in   $(F_{B}   (  \mathbb{R}^m )^p,   d_p)$.

\begin{tm}\label{pcn}
 $U$ is a relatively compact set in $(F_{B}   (  \mathbb{R}^m )^p,   d_p)$
 if and only if
 \\
 (\romannumeral1) \  $U$ is uniformly $p$-mean bounded;
 \\
(\romannumeral2) \ $U$ is $p$-mean equi-left-continuous.
\end{tm}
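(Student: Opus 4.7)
The plan is to split the proof into the two directions, with necessity coming quickly from the metric-space facts about relatively compact sets and sufficiency obtained via a diagonal subsequence argument combined with the representation theorem for $F_B(\mathbb{R}^m)^p$ (Theorem \ref{rfbp}). The main obstacle will be the final $d_p$-convergence step in the sufficiency direction, where the equi-left-continuity must be turned into a genuine integral estimate against a fuzzy set built cut-by-cut.

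\textbf{Necessity.} Since $U$ is relatively compact in the metric space $(F_B(\mathbb{R}^m)^p, d_p)$, it is bounded, which by definition is precisely uniform $p$-mean boundedness. It is also totally bounded, so for any $\varepsilon>0$ there is a finite $\varepsilon/3$-net $u_1,\dots,u_k\in U$. By Lemma \ref{uplc} each $u_i$ is individually $p$-mean left-continuous, giving $\delta_i>0$ with $\bigl(\int_h^1 H([u_i]_\alpha,[u_i]_{\alpha-h})^p\,d\alpha\bigr)^{1/p}<\varepsilon/3$ for $h<\delta_i$; set $\delta=\min_i\delta_i$. For any $u\in U$ pick $u_i$ with $d_p(u,u_i)<\varepsilon/3$ and apply Minkowski's inequality together with the change of variable $\beta=\alpha-h$ to bound $\bigl(\int_h^1 H([u]_{\alpha-h},[u_i]_{\alpha-h})^p\,d\alpha\bigr)^{1/p}\leq d_p(u,u_i)$, yielding the equi-left-continuity bound $<\varepsilon$.

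\textbf{Sufficiency, construction of the limit.} Given a sequence $\{u_n\}\subset U$, by Lemma \ref{Unes} and Proposition \ref{sca} the family $\{[u_n]_\alpha\}$ is relatively compact in $(K(\mathbb{R}^m),H)$ for every $\alpha\in(0,1]$. Fix a countable dense set $D\subset(0,1]$ containing $1$. A diagonal extraction yields a subsequence $\{u_{n_j}\}$ with $H([u_{n_j}]_\alpha,v_\alpha)\to 0$ for every $\alpha\in D$, where $v_\alpha\in K(\mathbb{R}^m)$. Anti-monotonicity of cuts plus Proposition \ref{mce} transfers to $v_\alpha\supseteq v_\beta$ whenever $\alpha\le\beta$ in $D$, and all $v_\alpha$ are nonempty (since $[u_n]_1\ne\emptyset$). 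Define $w_\alpha:=\bigcap_{\gamma\in D,\,\gamma<\alpha}v_\gamma$ for $\alpha\in(0,1]$ and $w_0:=\overline{\bigcup_{\alpha>0}w_\alpha}$. Standard arguments show $\{w_\alpha\}$ satisfies conditions (i)--(iii) of Theorem \ref{rfbp}; condition (iv), $\int_0^1 H(w_\alpha,\{0\})^p\,d\alpha<\infty$, follows from uniform $p$-mean boundedness via Fatou's lemma applied to $H([u_{n_j}]_\alpha,\{0\})$ along a sub-subsequence. Theorem \ref{rfbp} then produces $u_0\in F_B(\mathbb{R}^m)^p$ with $[u_0]_\alpha=w_\alpha$, and by construction $[u_0]_\alpha=v_\alpha$ at all $\alpha\in D$ that are left-continuity points of the family $\{v_\gamma\}_{\gamma\in D}$ (a cofinal-in-$D$ set, which is enough).

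\textbf{Sufficiency, $d_p$-convergence (the main obstacle).} The task is to show $d_p(u_{n_j},u_0)\to 0$. Given $\varepsilon>0$, use the equi-left-continuity of $\{u_n\}$ together with Lemma \ref{uplc} applied to $u_0$ to pick $\delta>0$ with $\bigl(\int_h^1 H([u]_\alpha,[u]_{\alpha-h})^p\,d\alpha\bigr)^{1/p}<\varepsilon$ for all $u\in\{u_n\}\cup\{u_0\}$ and all $h\in(0,\delta)$. Choose a partition $0=\alpha_0<\alpha_1<\cdots<\alpha_N=1$ with $\alpha_i\in D\cup\{0,1\}$ and mesh $<\delta$. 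Splitting $\int_0^1 H([u_{n_j}]_\alpha,[u_0]_\alpha)^p\,d\alpha$ over the subintervals and applying Minkowski yields three families of terms: interval-integrals of $H([u_{n_j}]_\alpha,[u_{n_j}]_{\alpha_i})$ and of $H([u_0]_\alpha,[u_0]_{\alpha_i})$, plus a piece $\sum_i(\alpha_i-\alpha_{i-1})H([u_{n_j}]_{\alpha_i},[u_0]_{\alpha_i})^p$ that tends to zero as $j\to\infty$ by the choice of $D$. For the first two families, anti-monotonicity of cuts gives the pointwise bound $H([u]_\alpha,[u]_{\alpha_i})\le H([u]_\alpha,[u]_{\alpha+\delta})$ on $[\alpha_{i-1},\alpha_i]$ (wherever $\alpha+\delta\le 1$); after a change of variable $\beta=\alpha+\delta$, each family is dominated by $\int_\delta^1 H([u]_\beta,[u]_{\beta-\delta})^p\,d\beta$ plus a small tail near $\alpha=1$ controlled by the uniform Hausdorff bound on $\{[u]_{1-\delta}:u\in U\cup\{u_0\}\}$ coming from Lemma \ref{Unes}. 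All three families are thus bounded by constants times $\varepsilon$, completing the argument.
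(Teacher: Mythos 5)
Your necessity argument is the paper's own ($\varepsilon/3$-net plus Lemma \ref{uplc} and Minkowski), and your construction of the candidate limit by diagonal extraction over a countable dense set of levels is essentially the paper's Step 1. Where you diverge is the final $d_p$-convergence estimate: you replace the paper's split $\int_0^1=\int_0^h+\int_h^1$ (head handled by an $h$-shift and equi-left-continuity, tail by dominated convergence against a.e.\ convergent cuts) with a partition of $[0,1]$ of mesh $<\delta$ and a Riemann-sum term at the nodes. That variant can be made to work, but as written it has a genuine gap at exactly the point where this theorem is delicate.

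The gap is the claim that the partition nodes can be chosen so that $H([u_{n_j}]_{\alpha_i},[u_0]_{\alpha_i})\to 0$. You only know $H([u_{n_j}]_\gamma,v_\gamma)\to 0$ for $\gamma\in D$, and you identify $[u_0]_\gamma$ with $v_\gamma$ only at the left-continuity points of the family $\{v_\gamma\}_{\gamma\in D}$, asserting these form a ``cofinal-in-$D$ set, which is enough.'' Cofinality is not the relevant property (it does not let you refine the mesh anywhere except near $1$), and density of this set is not established: for an abstract decreasing family indexed by $D$ the set of left-continuity points inside $D$ can be empty --- take $v_\gamma=[0,1]\cup[2,\,2+g(\gamma)]$ with $g(x)=\sum_{q_n>x}2^{-n}$ and $D=\{q_n\}$, so that $g$ has a left jump at every point of $D$. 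Your argument uses only this abstract structure and so does not exclude such behaviour. The correct repair is the paper's assertions (I)--(II) in Step 1: the set of levels $\alpha$ at which $H([u_{n_j}]_\alpha,[u_0]_\alpha)\not\to 0$ is at most countable (this rests on the countability of $\{\alpha:[u_0]_\alpha\neq\overline{\{u_0>\alpha\}}\}$ together with a two-sided squeeze between levels of $D$ just below and just above $\alpha$), so the good levels are co-countable, hence dense, and the nodes may be drawn from them. Note that the same a.e.\ convergence is also what your Fatou step for condition (\romannumeral4) of Theorem \ref{rfbp} silently relies on. Since the published history of this result consists largely of proofs that slipped at precisely this kind of level-set subtlety, the countability argument cannot be waved through.
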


\begin{proof}
\ \emph{\textbf{Necessity.}}
If $U$ is a relatively compact set in
$(F_{B}   (  \mathbb{R}^m )^p,   d_p)$.
 Since $(F_{B}   (  \mathbb{R}^m )^p,   d_p)$ is a metric space,
it
 follows immediately that $U$ is a
bounded set in $(F_{B}   (  \mathbb{R}^m )^p,   d_p)$,
i.e.
$U$ is uniformly $p$-mean bounded.

Now we prove that $U$ is   $p-$mean equi-left-continuous.
Given $\varepsilon>0$. Since $U$ is a relatively compact set,
there exists an $\varepsilon/3$-net $\{u_1,u_2,\ldots u_n\}$ of
$U$.
From Lemma \ref{uplc}, we know that $\{ u_k: k=1,2,\ldots n \}$
is $p$-mean equi-left-continuous.
Hence
there exists $\delta>0$ such that
\begin{equation}\label{uk1e}
  \left(\int_h^{1}    H([u_k]_\alpha, [u_k]_{\alpha-h} )^p \;d\alpha  \right)^{1/p}   \leq   \varepsilon/3.
\end{equation}
for all $h\in [0,\delta)$
and
$k=1,2,\ldots, n$.

Given $u\in U$, there is an $u_k$ such that $d_p(u, u_k)\leq \varepsilon/3$,
and
thus,
by \eqref{uk1e}, we know that for all $h\in [0,\delta)$,
\begin{align}
      &  \left(\int_h^1   H([u]_\alpha,  [u]_{\alpha-h} )^p \;d\alpha   \right)^{1/p}    \nonumber
\\
   &  \leq      \left(\int_h^1   H([u]_\alpha,  [u_k]_{\alpha} )^p \;d\alpha   \right)^{1/p}
   +
    \left(\int_h^1   H([u_k]_\alpha,  [u_k]_{\alpha-h} )^p \;d\alpha   \right)^{1/p}
   +
    \left(\int_h^1   H([u_k]_{\alpha-h},  [u]_{\alpha-h} )^p \;d\alpha   \right)^{1/p}
    \nonumber
     \\
  &    \leq  \varepsilon/3
  +
   \varepsilon/3
      +
      \varepsilon/3
 =\varepsilon.
\end{align}
From
the
 arbitrariness of $\varepsilon$ and $u\in U$, we obtain that $U$ is $p$-mean equi-left-continuous.

\emph{\textbf{Sufficiency.}} If $U$ satisfies (\romannumeral1) and (\romannumeral2).
To show $U$ is a relatively compact set, it suffices to find a convergent subsequence of an arbitrarily
given sequence in $U$.

Let $\{u_n\}$ be a sequence in $U$.
To find a subsequence $\{v_n\}$ of $\{u_n\}$
which converges to $v \in F_{B}   (  \mathbb{R}^m )^p$ according to $d_p$ metric,
we split the proof into three steps.

\textbf{Step 1.} \ Find a subsequence $\{v_n\}$ of $\{u_n\}$
and $v \in F_{GB}   (  \mathbb{R}^m )$
such that
\begin{equation}\label{vnec}
  H([v_n]_\alpha, [v]_\alpha) \st{   \mbox{a.e.}   }{\to}   0 \ (  [0,1]    ).
\end{equation}

Since $U$   is uniformly $p-$mean bounded,
by
 Lemma \ref{Unes},
  $\{[u]_\alpha: u\in U\}$
   is a bounded set in $K(\mathbb{R}^m)$
   for each $\al\in (0,1]$.
Thus, by Proposition \ref{sca}, for each $\alpha>0$,
$\{[u]_\alpha: u\in U\}$ is a relatively compact set in
$(K(\mathbb{R}^m), H)$.

Arrange
all rational numbers in
$(0,1]$
into a sequence
 $q_1,q_2,\ldots, q_n,\ldots$.
Then
 $\{u_n\}$
has a subsequence $\{u_n^{(1)}\}$ such that
$\{[u_n^{(1)}]_{q_1}\}$
converges to $u_{q_1} \in K(\mathbb{R}^m)$,
i.e. $H(   [u_n^{(1)}]_{q_1},     u_{q_1}   )  \to 0$.
If   $\{u_n^{(1)}\}, \ldots, \{u_n^{(k)}\}$ have been chosen,
we
can
choose
a subsequence $\{    u_n^{  (k+1)  }    \}$
of
$\{   u_n^ { (k) }   \}$
such that
 $\{[u_n^{(k+1)}]_{   q_{k+1}   }\}$
converges to
$u_{   q_{k+1}    }
 \in
 K (\mathbb{R}^m)
 $.
Thus we obtain nonempty compact sets $u_{q_k}, k=1,2,\ldots$.
with
 $ u_{q_m} \subseteq   u_{q_l}$  whenever $q_m > q_l $.

Put $v_n=\{    u_n^{  (n) }    \}$ for $n=1,2,\ldots$.
Then $\{v_n\}$
is
a subsequence of
$\{u_n\}$
and
\begin{equation}\label{qpc}
  H([v_n]_{q_k},   u_{q_k})  \to 0 \ \ \ \mbox{as} \ \ \ n\to \infty
\end{equation}
for   $k=1,2,\ldots$.
Define $\{v_\al:  \al\in [0,1] \} $ as follows:
\[
v_\alpha
=
\left\{
  \begin{array}{ll}
    \bigcap_{q_k  <   \alpha}  u_{q_k}, & \   \alpha \in (0,1]\hbox{;} \\
   \overline{ \bigcup_{\alpha \in (0,1]} v_\alpha}  , &  \  \alpha=0 \hbox{.}
  \end{array}
\right.
\]
Then $v_\al$, $\al\in [0,1]$, have the following properties:
\begin{enumerate}\renewcommand{\labelenumi}{(\roman{enumi})}

  \item  $v_\la\in K(\mathbb{R}^m)$ for all $\la\in (0,1]$;

  \item  $v_\la=\bigcap_{\gamma<\lambda}v_\gamma$ for all $\la\in (0,1]$;

  \item  $v_0=\overline{  \bigcup_{\gamma>0}v_\gamma   }$.

\end{enumerate}
In fact, by Proposition \ref{mce},
we obtain that
$v_\al \in K ( \mathbb{R}^m )$
for all $\alpha \in (0,1]$.
Thus property (\romannumeral1) is proved.
Properties (\romannumeral2) and (\romannumeral3) follow immediately from
the
definition of $v_\al$.

Define   a function
$v: \mathbb{R}^m \to [0,1]$
by
$$v(x)=\left\{
         \begin{array}{ll}
           \bigvee_{x\in v_\lambda}\lambda, & \ x\in \bigcup_{\lambda>0}    v_\lambda,\\
           0, & \ \hbox{otherwise.}
         \end{array}
       \right.
$$
Then $v$ is a fuzzy set on $\mathbb{R}^m$.
From properties (\romannumeral1), (\romannumeral2) and (\romannumeral3) of $v_\al$,
we know that
$$[v]_\al=v_\al.$$
So
 $v\in F_{GB} (\mathbb{R}^m)$.
Clearly if the following statements (\uppercase\expandafter{\romannumeral1}) and (\uppercase\expandafter{\romannumeral2})
are
true, then
we obtain
\eqref{vnec}, i.e.  $ H([v_n]_\alpha, [v]_\alpha) \st{   \mbox{a.e.}   }{\to}   0 \ (  [0,1]    )$.
\begin{enumerate}\renewcommand{\labelenumi}{(\Roman{enumi})}
  \item $P(v)$ is at most countable, where $P(v)=\{ \al\in (0,1):  \overline{\{ v>\alpha\}} \subsetneq   [v]_\al  \}$, where
$
\overline{\{ v>\alpha\}} : =\overline{\bigcup_{\beta>\alpha} [v]_{\beta}    }   $.

  \item If $\alpha\in (0,1)\backslash P(v)$,
then
\begin{equation}\label{aehc}
  H([v_n]_\alpha, [v]_\alpha) \to  0  \  \hbox{as} \  n\to \infty.
\end{equation}

\end{enumerate}

Firstly, we show assertion (\uppercase\expandafter{\romannumeral1}).
Let $D(v):=\{\alpha\in (0,1):    [v]_\alpha \nsubseteq  \overline{ \{v> \alpha\}}  \; \}$.
Notice that
$P(v) \subseteq D(v)  $ (In fact, it can be checked that $P(v) = D(v)$).
By the conclusion in Appendix of \cite{huang9},
$D(v) $ is at most countable.
So
$P(v)$ is at most countable.

Secondly, we show assertion (\uppercase\expandafter{\romannumeral2}).
Suppose that  $\alpha\in (0,1)\backslash P(v)$, then from Proposition \ref{mce},
$H( [v]_\beta,  [v]_\al  ) \to 0$ as $\beta \to \alpha$.
Thus,
 given $\varepsilon>0$, we can find a $\delta>0$
such that
$H(u_q, v_\al)<\varepsilon$
for all $q\in \mathbb{Q}$ with $|q-\alpha|<\delta$.
So
\begin{equation*}
  H^*([v_n]_\alpha,   v_\al)  \leq    H^*([v_n]_{q_1},   v_\al)  \leq     H^*([v_n]_{q_1},   u_{q_1}) +\varepsilon
\end{equation*}
for $q_1\in \mathbb{Q} \cap (\alpha-\delta,   \alpha)$.
Hence, by \eqref{qpc} and the arbitrariness of $\varepsilon$,
we obtain
\begin{equation}\label{lhc}
   H^*([v_n]_\alpha,   v_\al) \to 0 \ (n\to \infty).
\end{equation}
On the other hand,
\begin{equation*}
  H^*(v_\alpha,   [v_n]_\al)  \leq    H^*(v_\al,   [v_n]_{q_2}   )  \leq     H^*( u_{q_2},   [v_n]_{q_2}  ) +\varepsilon
\end{equation*}
for $q_2\in \mathbb{Q} \cap (\alpha,  \alpha + \delta)$.
Hence, by \eqref{qpc} and the arbitrariness of $\varepsilon$,
we obtain
\begin{equation}\label{rhc}
   H^*(  v_\al ,    [v_n]_\alpha ) \to 0 \ (n\to \infty).
\end{equation}
Combined with \eqref{lhc} and \eqref{rhc},
we thus
obtain \eqref{aehc}.

\textbf{Step 2. }
Prove
that
\begin{equation}\label{vnc}
   \left(      \int_0^1  H( [v_n]_\alpha,   [v]_\alpha    )^p   \;d\alpha       \right)^{1/p}      \to 0.
\end{equation}

Given
$\varepsilon>0$.
It can be deduced that, for all $h<1/2$,
\begin{align}
   &  \left(      \int_0^h  H( [v_n]_\alpha,  [v]_\alpha   )^p   \;d\alpha       \right)^{1/p}   \nonumber
\\
 \leq & \left(      \int_0^h  H( [v_n]_\alpha,  [v_n]_{\alpha+h}   )^p   \;d\alpha       \right)^{1/p}
 +
  \left(      \int_0^h  H( [v_n]_{\alpha+h},  [v]_{\alpha+h}   )^p   \;d\alpha       \right)^{1/p}
    +
   \left(      \int_0^h  H( [v]_{\alpha+h},  [v]_\alpha   )^p   \;d\alpha       \right)^{1/p}  \nonumber
\\
 = &
   \left(      \int_h^{2h}  H( [v_n]_{\beta-h},  [v_n]_{\beta}   )^p   \;d\beta       \right)^{1/p}
    +
  \left(      \int_h^{2h}  H( [v_n]_{\beta},  [v]_{\beta}   )^p   \;d\beta       \right)^{1/p}
     +
   \left(      \int_h^{2h}  H( [v]_{\beta},  [v]_{\beta-h}   )^p   \;d\beta       \right)^{1/p} \nonumber
   \\
 \leq   &
       \left(      \int_h^{1}  H( [v_n]_{\beta-h},  [v_n]_{\beta}   )^p   \;d\beta       \right)^{1/p}
    +
  \left(      \int_h^{1}  H( [v_n]_{\beta},  [v]_{\beta}   )^p   \;d\beta       \right)^{1/p}
     +
   \left(      \int_h^{1}  H( [v]_{\beta},  [v]_{\beta-h}   )^p   \;d\beta       \right)^{1/p}.\label{ean}
\end{align}
Since $U$ is $p$-mean equi-left-continuous, there exists an $h\in (0,1/2)$
such that
\begin{equation}\label{evhn}
   \left(      \int_h^{1}  H( [v_n]_{\beta-h},  [v_n]_{\beta}   )^p   \;d\beta       \right)^{1/p} <\varepsilon/4
\end{equation}
for all $n=1,2,\ldots$.
From \eqref{vnec}, we know
if $n\to \infty$
then
$ H( [v_n]_{\beta-h},  [v_n]_{\beta} )    \to    H( [v]_{\beta-h},  [v]_{\beta} ) $
a.e. on $\beta\in [h,1]$. So, by Fatou Lemma, we have
\begin{equation}\label{hev}
    \left(      \int_h^{1}  H( [v]_{\beta-h},  [v]_{\beta}   )^p   \;d\beta       \right)^{1/p}
    \leq
      \liminf_n     \left(      \int_h^{1}  H( [v_n]_{\beta-h},  [v_n]_{\beta}   )^p   \;d\beta       \right)^{1/p}
      \leq
      \varepsilon/4,
\end{equation}

Note that $[v_n]_h$ and $[v]_h$   are contained in $\{[u]_\alpha: u\in U\}$ which is compact,
it thus follows from the Lebesgue's
dominated convergence theorem
and
\eqref{vnec}
that
\begin{equation*}
   \left(      \int_h^1  H( [v_n]_\alpha,   [v]_\alpha    )^p   \;d\alpha       \right)^{1/p}    \to 0
\end{equation*}
as $n\to \infty$.
Hence there is an $N(h,\varepsilon)$ such that
\begin{equation}\label{bvnrgdpe}
  \left(      \int_h^1  H( [v_n]_\alpha,   [v]_\alpha    )^p   \;d\alpha       \right)^{1/p} \leq \varepsilon/4
\end{equation}
for all $n\geq N$.

Combined with \eqref{ean}, \eqref{evhn}, \eqref{hev}, and \eqref{bvnrgdpe},
it
yields
that
\begin{align*}
 & \left(      \int_0^1  H( [v_n]_\alpha,   [v]_\alpha    )^p   \;d\alpha       \right)^{1/p}
 \\
 & \leq
    \left(      \int_0^h  H( [v_n]_\alpha,   [v]_\alpha    )^p   \;d\alpha       \right)^{1/p}
    +
     \left(      \int_h^1  H( [v_n]_\alpha,   [v]_\alpha    )^p   \;d\alpha       \right)^{1/p}
     \\
  &    \leq
          \varepsilon/4 +   \left(      \int_h^1  H( [v_n]_\alpha,   [v]_\alpha    )^p   \;d\alpha       \right)^{1/p}
          +\varepsilon/4
          + \left(      \int_h^1  H( [v_n]_\alpha,   [v]_\alpha    )^p   \;d\alpha       \right)^{1/p}
          \\
  &
  \leq \varepsilon
\end{align*}
for all $n\geq N$.
Thus we obtain \eqref{vnc} from the arbitrariness of $\varepsilon$.

\textbf{Step 3.} Show that $v\in F_{B}   (  \mathbb{R}^m )^p$.

By \eqref{vnc}, we know
that
there is an $N$
such that
$$ \left(      \int_0^1  H( [v]_\alpha,  [v_N]_\alpha   )^p   \;d\alpha       \right)^{1/p}  <    1,$$
and
then
\begin{align*}
&     \left(      \int_0^1  H( [v]_\alpha,  \{0\}   )^p   \;d\alpha       \right)^{1/p}
 \\
& \leq
 \left(      \int_0^1  H( [v]_\alpha,  [v_N]_\alpha   )^p   \;d\alpha       \right)^{1/p}
+
  \left(      \int_0^1  H( [v_N]_\alpha,  \{0\}   )^p   \;d\alpha       \right)^{1/p}
  \\
  & \leq
 1 +     \left(      \int_0^1  H( [v_N]_\alpha,  \{0\}   )^p   \;d\alpha       \right)^{1/p}
 < +\infty.
\end{align*}
By properties (\romannumeral1),(\romannumeral2) and (\romannumeral3) of $v_\al$ and Theorem \ref{rfbp},
this yields that $v\in F_{B}   (  \mathbb{R}^m )^p$.

From steps 1, 2 \ and 3, we know
that
for arbitrary sequence $\{u_n\}$ of $U$,
there exists
a subsequence $\{v_n\}$ of $\{u_n\}$
which converges to $v\in F_{B}   (  \mathbb{R}^m )^p$.
This means that $U$ is a relatively compact set in $(F_{B}   (  \mathbb{R}^m )^p, d_p)$.
  \end{proof}

By using the characterization of relatively compact sets in $(F_{B}   (  \mathbb{R}^m )^p,   d_p)$
given in Theorem \ref{pcn},
we can derive
characterizations of totally bounded sets and compact sets below
as consequences.

\begin{tm} \label{tcn}
  $U$ is a totally bounded set in $(F_{B}   (  \mathbb{R}^m )^p,   d_p)$
 if and only if
 \\
 (\romannumeral1) \  $U$ is uniformly $p$-mean bounded;
 \\
(\romannumeral2) \ $U$ is $p$-mean equi-left-continuous.
\end{tm}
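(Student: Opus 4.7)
The plan is to reduce Theorem \ref{tcn} to the already-established Theorem \ref{pcn}, exploiting the standard metric-space fact that relative compactness always implies total boundedness, while supplying a direct argument for the reverse implication because at this point in the paper the completeness of $(F_B(\mathbb{R}^m)^p, d_p)$ has not yet been proved.

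For \emph{sufficiency}, assuming (i) and (ii), Theorem \ref{pcn} immediately yields that $U$ is relatively compact in $(F_B(\mathbb{R}^m)^p, d_p)$. Since relative compactness in any metric space implies total boundedness (as noted in the bullet list of classical facts in this section), $U$ is totally bounded. So this direction is essentially a one-line consequence.

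For \emph{necessity}, I plan to mirror, almost verbatim, the necessity argument in the proof of Theorem \ref{pcn}, since that argument only uses the existence of finite $\varepsilon$-nets and never invokes convergence of subsequences. In more detail: total boundedness trivially implies boundedness in the metric $d_p$, which is precisely uniform $p$-mean boundedness, establishing (i). For (ii), given $\varepsilon>0$, choose a finite $\varepsilon/3$-net $\{u_1,\ldots,u_n\}\subset U$ (or even in $F_B(\mathbb{R}^m)^p$ — the standard net-refinement trick lets us take it inside $U$). By Lemma \ref{uplc} each $u_k$ is $p$-mean left-continuous, and since we have only finitely many of them, a common $\delta>0$ can be selected so that $\bigl(\int_h^1 H([u_k]_\alpha,[u_k]_{\alpha-h})^p\,d\alpha\bigr)^{1/p}<\varepsilon/3$ for every $h\in[0,\delta)$ and every $k$. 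For an arbitrary $u\in U$, pick $u_k$ with $d_p(u,u_k)\le \varepsilon/3$ and apply the triangle inequality for the $L^p$-norm (Minkowski) to the integrand $H([u]_\alpha,[u]_{\alpha-h})\le H([u]_\alpha,[u_k]_\alpha)+H([u_k]_\alpha,[u_k]_{\alpha-h})+H([u_k]_{\alpha-h},[u]_{\alpha-h})$, obtaining a total bound of $\varepsilon$ uniformly in $u\in U$.

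I do not anticipate a genuine obstacle here; the content is essentially contained in Theorem \ref{pcn} together with the textbook equivalence ``relatively compact $\Rightarrow$ totally bounded.'' The only mildly delicate point is simply remembering not to invoke completeness (which would trivialize the theorem but has not yet been proved in the paper). Writing out the triangle-inequality step for equi-left-continuity cleanly, exactly as in the necessity half of the proof of Theorem \ref{pcn}, will finish the argument.
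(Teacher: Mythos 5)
Your proposal is correct and matches the paper's own argument: the paper likewise proves sufficiency by combining Theorem \ref{pcn} with the fact that relatively compact sets in a metric space are totally bounded, and proves necessity by observing that the necessity half of the proof of Theorem \ref{pcn} uses only the existence of a finite $\varepsilon/3$-net (i.e., total boundedness), not relative compactness. You have simply written out explicitly what the paper leaves as a remark.
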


\begin{proof} \ Notice that   it only use the totally boundedness of $U$
to show
the necessity part of the proof of Theorem \ref{pcn}.
So
the desired conclusion follows immediately from
Theorem \ref{pcn}.
\end{proof}

\begin{tm} \label{gscn} Let
  $U$ be a subset of $ (F_{B}   (  \mathbb{R}^m )^p,   d_p)$,
then $U$ is    compact  in $ (F_{B}   (  \mathbb{R}^m )^p,   d_p)$
 if and only if
 \\
 (\romannumeral1) \  $U$ is uniformly $p$-mean bounded;
 \\
(\romannumeral2) \ $U$ is $p$-mean equi-left-continuous;
\\
 (\romannumeral3) \  $U$ is a closed set in $ (F_{B}   (  \mathbb{R}^m )^p,   d_p)$.
\end{tm}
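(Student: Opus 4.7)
The plan is to derive Theorem \ref{gscn} as a direct corollary of Theorem \ref{pcn}, using the standard metric-space fact that a subset is compact if and only if it is relatively compact and closed. Since $(F_B(\mathbb{R}^m)^p, d_p)$ is a metric space, this equivalence applies without any extra work.

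For the necessity direction, I would assume $U$ is compact. In a metric space every compact set is closed, so (iii) is immediate. Moreover, a compact set is trivially relatively compact (its closure equals itself and is compact), so Theorem \ref{pcn} yields that $U$ is uniformly $p$-mean bounded and $p$-mean equi-left-continuous, giving (i) and (ii).

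For the sufficiency direction, I would suppose $U$ satisfies (i), (ii), and (iii). Conditions (i) and (ii) together with Theorem \ref{pcn} imply that $U$ is relatively compact in $(F_B(\mathbb{R}^m)^p, d_p)$, i.e.\ $\overline{U}$ is compact. Condition (iii) gives $U = \overline{U}$, so $U$ itself is compact.

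Because the result is a direct packaging of Theorem \ref{pcn} with the basic topological characterization of compactness in metric spaces, there is no genuine obstacle in this proof; essentially the only thing to do is observe that no step in either direction requires more than Theorem \ref{pcn} and the elementary fact that compactness equals relative compactness plus closedness. The proof can be stated in just a few lines by simply quoting Theorem \ref{pcn}.
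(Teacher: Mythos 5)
Your proposal is correct and is essentially the same argument as the paper's, which simply derives the theorem immediately from Theorem \ref{pcn}; you have merely spelled out the standard fact that in a metric space compactness is equivalent to relative compactness plus closedness. No gaps.
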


\begin{proof} \ The desired result follows immediately from Theorem \ref{pcn}.    \end{proof}

\section{Relationship between $(F_{B}   (  \mathbb{R}^m ), d_p)$ and $(F_{B}   (  \mathbb{R}^m )^p, d_p)$
and
properties    of $(F_{B}   (  \mathbb{R}^m ), d_p)$ \label{relation}}

In this section, we show that
$(F_{B}   (  \mathbb{R}^m )^p, d_p)$
is
the completion of
$(F_{B}   (  \mathbb{R}^m ), d_p)$,
and
then
present
characterizations of totally bounded sets, relatively compact sets and compact sets in $(F_{B}   (  \mathbb{R}^m ), d_p)$.

Diamond and Kloeden \cite{da} pointed out that
$(E^m, d_p)$ is not a complete space.
Ma \cite{ma} gave the following   example to show this fact.
Let
\[
u_n(x)=\left\{
         \begin{array}{ll}
           e^{-x}, & \hbox{if} \ 0\leq x \leq n, \\
           0, & \hbox{otherwise,}
         \end{array}
       \right.
 n=1,2,\ldots.
\]
Then
$\{u_n\} $ is a Cauchy sequence in $ (E^1, d_1) \cap  (S_0^1, d_1)  $. Put
\[
u(x)=\left\{
         \begin{array}{ll}
           e^{-x}, & \hbox{if} \ 0\leq x < +\infty, \\
           0, & \hbox{otherwise,}
         \end{array}
       \right.
\]
then it can be checked that $u\in F_{B}   (  \mathbb{R}^1 )^{1}\backslash   F_{B}   (  \mathbb{R}^1 ) $ and $u_n$ converges to $u$ in $d_1$ metric.
Note
 that
$E^1, S_0^1   \subset    S^1    \subset    \widetilde{S}^{1}   \subset  F_{B}   (  \mathbb{R}^1 )$,
it yields that
none of
$(E^1, d_1)$, $(S_0^1, d_1)$, $(S^1, d_1)$, $(\widetilde{S}^1, d_1)$ and $(F_{B}   (  \mathbb{R}^1 ), d_1)$
is
complete.
Along this way, we can show that none of
$(E^m, d_p)$, $(S_0^m, d_p)$, $(S^m, d_p)$, $(\widetilde{S}^m, d_p)$ and $(F_{B}   (  \mathbb{R}^m ), d_p)$
is complete.

First,
we
do
 step by step
to show that the completion of
 $(F_{B}   (  \mathbb{R}^m ), d_p)$
is
exactly its $L_p$-extension
 $(F_{B}   (  \mathbb{R}^m )^p, d_p)$.
Two facts
are
first proved which are exhibited in Theorems \ref{scp} and \ref{sln}.

\begin{tm} \label{scp}
 $(F_{B}   (  \mathbb{R}^m )^p, d_p)$ is a complete space.
\end{tm}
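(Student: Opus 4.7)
The plan is to deduce completeness directly from the characterization of relatively compact sets established in Theorem \ref{pcn}. The key observation is the standard metric-space fact that a Cauchy sequence is automatically totally bounded, and that a Cauchy sequence which admits a convergent subsequence must itself converge to the same limit. Therefore it suffices to show that every Cauchy sequence in $(F_{B}(\mathbb{R}^m)^p, d_p)$ contains a subsequence converging, in $d_p$ metric, to some element of $F_{B}(\mathbb{R}^m)^p$.

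First I would take an arbitrary Cauchy sequence $\{u_n\}$ in $(F_{B}(\mathbb{R}^m)^p, d_p)$, regarded as the subset $U = \{u_n : n \in \mathbb{N}\}$. Because $\{u_n\}$ is Cauchy, $U$ is totally bounded in $(F_{B}(\mathbb{R}^m)^p, d_p)$. Applying Theorem \ref{tcn} then gives that $U$ is uniformly $p$-mean bounded and $p$-mean equi-left-continuous, which are precisely the two hypotheses of Theorem \ref{pcn}. Invoking the sufficiency part of Theorem \ref{pcn}, whose proof explicitly constructs from any such sequence a subsequence $\{u_{n_k}\}$ together with a limit $v \in F_{B}(\mathbb{R}^m)^p$ satisfying $d_p(u_{n_k}, v) \to 0$, I obtain the desired convergent subsequence in $F_{B}(\mathbb{R}^m)^p$.

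To finish, I would argue that the full sequence $\{u_n\}$ converges to the same $v$. Given $\varepsilon > 0$, the Cauchy property yields $N$ with $d_p(u_n, u_m) < \varepsilon / 2$ for all $n, m \geq N$, and from $d_p(u_{n_k}, v) \to 0$ one can pick some $n_k \geq N$ with $d_p(u_{n_k}, v) < \varepsilon / 2$. The triangle inequality then gives $d_p(u_n, v) < \varepsilon$ for every $n \geq N$, so $u_n \to v$ in $d_p$. Since $v \in F_{B}(\mathbb{R}^m)^p$, completeness follows.

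I do not anticipate any real obstacle, because the substantive analytic content, namely the diagonal extraction across rational $\alpha$-levels, the identification of the pointwise limit fuzzy set via Proposition \ref{mce} and Theorem \ref{rfbp}, and the passage from a.e.\ convergence of cuts to $L_p$-convergence via Fatou's lemma, equi-left-continuity, and Lebesgue dominated convergence, has already been carried out inside the sufficiency direction of Theorem \ref{pcn}. The only thing to add here is the elementary observation that a Cauchy sequence having a convergent subsequence must converge, which upgrades the relatively compact conclusion of Theorem \ref{pcn} to the sequential completeness of the whole space.
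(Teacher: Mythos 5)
Your proof is correct and follows essentially the same route as the paper: both reduce completeness to the sufficiency direction of Theorem \ref{pcn} applied to a Cauchy sequence, followed by the standard observation that a Cauchy sequence possessing a convergent subsequence converges to the same limit. The only difference is how conditions (\romannumeral1) and (\romannumeral2) are verified for the Cauchy sequence: the paper checks $p$-mean equi-left-continuity directly by an $\varepsilon/3$-argument using Lemma \ref{uplc}, whereas you obtain both conditions at once from the fact that a Cauchy sequence is totally bounded together with Theorem \ref{tcn}; this shortcut is legitimate (Theorem \ref{tcn} rests only on the necessity half of Theorem \ref{pcn}, which uses nothing beyond finite $\varepsilon$-nets, so there is no circularity) and is precisely the alternative route the paper itself points out in the remark immediately following Theorem \ref{scp}.
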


\begin{proof}   \ It   suffices to prove that each Cauchy sequence has a limit in  $(F_{B}   (  \mathbb{R}^m )^p, d_p)$.
Let
$\{u_n:  n\in \mathbb{N}\}$ be a Cauchy sequence in      $(F_{B}   (  \mathbb{R}^m )^p, d_p)$,
we assert
that
$\{u_n:  n\in \mathbb{N}\}$ is a relatively compact set in  $(F_{B}   (  \mathbb{R}^m )^p, d_p)$.

To show this assertion, by Theorem \ref{pcn},
it is equivalent to prove
that
$\{u_n:  n\in \mathbb{N}\}$ is a bounded set in $(F_{B}   (  \mathbb{R}^m )^p, d_p)$
and
that
$\{u_n:  n\in \mathbb{N}\}$    is  $p$-mean  equi-left-continuous.
The former follows immediately from
the fact
that
$\{u_n:  n\in \mathbb{N}\}$
is a Cauchy sequence.

To prove the latter, suppose
 $\varepsilon>0$. Since $\{u_n: n\in \mathbb{N}\}$
is a Cauchy sequence,
there exists an $N\in \mathbb{N}$ satisfies
that
$d_p(u_n,u_m) \leq \varepsilon/3$
for all $n,m \geq N$.
By Lemma \ref{uplc},
$\{u_k :\ 1\leq k \leq N\}$
is $p$-mean equi-left-continuous,
hence
we can find
an
$h>0$
such that
\begin{equation}
   \left(      \int_h^{1}  H( [u_k]_{\al-h},  [u_k]_{\al}   )^p   \;d\al     \right)^{1/p}  \leq \varepsilon/3   \label{nevc}
\end{equation}
for all $1\leq k \leq N$.
If $k> N$, then
\begin{align}
  &\left(      \int_h^{1}  H( [u_k]_{\al-h},  [u_k]_{\al}   )^p   \;d\al     \right)^{1/p}    \nonumber
\\
 &\hspace{-6mm}\leq  \left(      \int_h^{1}  H( [u_k]_{\al-h},   [u_N]_{\al-h}   )^p   \;d\al     \right)^{1/p}
 +
 \left(      \int_h^{1}  H( [u_N]_{\al-h},  [u_N]_{\al}   )^p   \;d\al     \right)^{1/p}
+
 \left(      \int_h^{1}  H( [u_N]_{\al},   [u_k]_{\al}   )^p   \;d\al     \right)^{1/p} \nonumber
\\
 &\hspace{-6mm} \leq \varepsilon/3   +   \varepsilon/3 + \varepsilon/3 =\varepsilon. \label{knvc}
\end{align}
From the arbitrariness of $\varepsilon$
and
ineqs. \eqref{nevc} and \eqref{knvc},
we
know
that
  $\{u_n:   n\in \mathbb{N}\}$ is   $p$-mean equi-left-continuous.

Now, from   the relatively compactness
of
   $\{u_n:   n\in \mathbb{N}\}$ in  $(F_{B}   (  \mathbb{R}^m )^p, d_p)$,
there
 exists
 a subsequence $\{u_{n_k}: k=1,2,\ldots\}$
of
$\{u_n:   n\in \mathbb{N}\}$
such that
$\lim_{k\to \infty} u_{n_k}  =  u \in F_{B}   (  \mathbb{R}^m )^p$.
Note
that
$\{u_n:   n\in \mathbb{N}\}$
is a Cauchy sequence,
we
thus
know that
$u_n$, $n=1,2,\ldots$,
also converges to $u$
in
$(F_{B}   (  \mathbb{R}^m )^p,  d_p)$.

The proof is completed.
\end{proof}

\begin{re}
  {\rm
By Theorems \ref{pcn} and \ref{tcn},
a set $U$ in $(F_{B}   (  \mathbb{R}^m )^p, d_p)$ is  totally bounded if and only if it is relatively compact.
This fact
alone
can ensure that $(F_{B}   (  \mathbb{R}^m )^p, d_p)$ is complete.

}
\end{re}

\begin{tm} \label{sln}
  $F_{B}   (  \mathbb{R}^m )$ is a dense set in $(F_{B}   (  \mathbb{R}^m )^p, d_p)$.
\end{tm}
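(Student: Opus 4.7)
The plan is to prove density by exhibiting an explicit approximation of each $u \in F_B(\mathbb{R}^m)^p$ by a sequence of compact-support fuzzy sets obtained by truncating $u$ from below. Concretely, for $\varepsilon \in (0,1)$ I will define
\[
u^{(\varepsilon)}(x) = \begin{cases} u(x), & u(x) \geq \varepsilon, \\ 0, & u(x) < \varepsilon, \end{cases}
\]
which coincides with the notation $u^{(\alpha)}$ introduced in Section~4. The goal is then to show $u^{(\varepsilon)} \in F_B(\mathbb{R}^m)$ and $d_p(u^{(\varepsilon)}, u) \to 0$ as $\varepsilon \to 0^+$.

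First, I would verify membership: a direct computation of $\alpha$-cuts gives $[u^{(\varepsilon)}]_\alpha = [u]_\alpha$ when $\alpha \geq \varepsilon$, and $[u^{(\varepsilon)}]_\alpha = [u]_\varepsilon$ when $0 < \alpha < \varepsilon$. In particular $[u^{(\varepsilon)}]_0 = \overline{\bigcup_{\gamma > 0}[u^{(\varepsilon)}]_\gamma} = [u]_\varepsilon$, which lies in $K(\mathbb{R}^m)$ by Remark~\ref{act}. Normality of $u^{(\varepsilon)}$ follows since $u$ attains $1$, and upper semi-continuity follows from the closedness of all its cut sets. Together with Theorem~\ref{rfbe}, this shows $u^{(\varepsilon)} \in F_B(\mathbb{R}^m)$.

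Second, I would estimate the distance. Because the two fuzzy sets agree above level $\varepsilon$, one has
\[
d_p(u^{(\varepsilon)}, u)^p = \int_0^\varepsilon H([u]_\varepsilon, [u]_\alpha)^p \, d\alpha.
\]
The key observation is that $[u]_\alpha \supseteq [u]_\varepsilon$ for $\alpha \leq \varepsilon$, which yields $H([u]_\varepsilon, \{0\}) \leq H([u]_\alpha, \{0\})$ on this range, and therefore, by the triangle inequality for $H$,
\[
H([u]_\varepsilon, [u]_\alpha) \leq H([u]_\varepsilon, \{0\}) + H([u]_\alpha, \{0\}) \leq 2\, H([u]_\alpha, \{0\}).
\]
Hence $d_p(u^{(\varepsilon)}, u)^p \leq 2^p \int_0^\varepsilon H([u]_\alpha, \{0\})^p \, d\alpha$. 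Since $u \in F_B(\mathbb{R}^m)^p$ means the nonnegative function $\alpha \mapsto H([u]_\alpha, \{0\})^p$ is Lebesgue integrable on $[0,1]$, the absolute continuity of the Lebesgue integral (cited in Section~4) forces the right-hand side to tend to $0$ as $\varepsilon \to 0^+$.

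There is no substantial obstacle; the only points requiring care are confirming that the truncated fuzzy set really does belong to $F_B(\mathbb{R}^m)$ (which reduces to checking the representation conditions of Theorem~\ref{rfbe} for the cut family above) and handling the $\alpha = 0$ case of the distance, where the integrand could a priori involve the unbounded set $[u]_0$ but in fact matters only on $(0,\varepsilon]$ once one observes $[u^{(\varepsilon)}]_0 = [u]_\varepsilon$ and the integral is with respect to Lebesgue measure.
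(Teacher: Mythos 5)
Your proposal is correct and follows essentially the same route as the paper: the same truncations $u^{(\varepsilon)}$ (the paper uses $u^{(1/n)}$), the same verification via the cut-set representation that the truncation lies in $F_B(\mathbb{R}^m)$, and the same distance estimate bounding $H([u]_\varepsilon,[u]_\alpha)$ by $2\,H([u]_\alpha,\{0\})$ on $(0,\varepsilon]$ and invoking absolute continuity of the Lebesgue integral. The only cosmetic difference is that you apply the triangle inequality pointwise before integrating, whereas the paper applies Minkowski's inequality to the two terms separately; both are valid.
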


\begin{proof}  \ Given $u\in F_{B}   (  \mathbb{R}^m )^p$. Put $u_n=u^{(1/n)}$, $n=1,2,\ldots$.
Then
\[
[u_n]_\al=\left\{
      \begin{array}{ll}
        [u]_\al, & \hbox{if} \    \al\geq   1/n,
\\
        \mbox{}[u]_{1/n}, & \hbox{if} \      \alpha \leq 1/n,
           \end{array}
    \right.
\]
for all $\alpha \in [0,1]$.
Notice
that
$[u_n]_0 =[u]_{1/n}\in K(\mathbb{R}^m)$.
 It
thus follows from Theorems \ref{rfbe} and \ref{rfbp}
that
$u_n\in F_{B}   (  \mathbb{R}^m )$ for $n=1,2,\ldots$.

Since $u\in F_{B}   (  \mathbb{R}^m )^p$,
we know
 $\left( \int_0^{1}  H(   [u]_{\al}, \{0 \}   )^p   \;d\al \right)^{1/p} < +\infty$,
thus, by the absolute continuity of the Lebesgue's integral,
it holds that,
for each $\varepsilon>0$,
there is a $\delta (\varepsilon)>0$
such that
\begin{equation}\label{ace}
  \left( \int_0^{\delta}  H(   [u]_{\al}, \{0 \}   )^p   \;d\al \right)^{1/p} < \varepsilon.
\end{equation}
Note that
\begin{align*}
d_p(u_n, u)& = \left( \int_0^{1/n}  H( [u]_{1/n},   [u]_{\al}   )^p   \;d\al \right)^{1/p}
\\
& \leq \left( \int_0^{1/n}  H(  [u]_{\al}, \{0\}   )^p   \;d\al \right)^{1/p}
+
\left( \int_0^{1/n}  H( [u]_{1/n},  \{0\}  )^p   \;d\al \right)^{1/p}
\\
& \leq 2 \left( \int_0^{1/n}  H(  [u]_{\al}, \{0\}   )^p   \;d\al \right)^{1/p},
\end{align*}
it then follows from ineq.\eqref{ace}
that
$d_p(u_n, u) \to 0$ as $n\to \infty$.

So, for each $u\in F_{B}   (  \mathbb{R}^m )^p$, we can find a sequence $\{u_n\} \subset F_{B}   (  \mathbb{R}^m )$
such that
$ u_n$ converges to    $ u$.
This means that $F_{B}   (  \mathbb{R}^m )$
is
dense
in $F_{B}   (  \mathbb{R}^m )^p$.
\end{proof}

\begin{re} \label{undg}
{\rm
From the proof of Theorem \ref{sln}, we know the following fact.
\\
  Given $u\in F_{B}   (  \mathbb{R}^m )^p$, then $u^{(1/n)} \in  F_{B}   (  \mathbb{R}^m )$ for each $n\in \mathbb{N}$,
and
$d_p(u^{(1/n)} ,u) \to 0$ as $n\to \infty$.
}
\end{re}

Now,
 we get relationship between
$ (F_{B}   (  \mathbb{R}^m )^p, d_p)$
and
  $ (F_{B}   (  \mathbb{R}^m ), d_p)$.

\begin{tm}
 $ (F_{B}   (  \mathbb{R}^m )^p, d_p)$ is the completion of  $ (F_{B}   (  \mathbb{R}^m ), d_p)$.
\end{tm}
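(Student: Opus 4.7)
The plan is to read off the statement directly from the two results that immediately precede it. Recall that the (metric) completion of a metric space $(X,d)$ is characterized, up to isometry, as any complete metric space $(\widetilde{X},\widetilde{d})$ together with an isometric embedding $\iota\colon X\to \widetilde{X}$ whose image $\iota(X)$ is dense in $\widetilde{X}$. So it suffices to exhibit such an ambient complete space and such a dense embedding.

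First I would note that $F_B(\mathbb{R}^m)\subset F_B(\mathbb{R}^m)^p$, as established in Section 3, and that on the subset $F_B(\mathbb{R}^m)$ the metric $d_p$ is literally the restriction of the $d_p$ metric defined on $F_B(\mathbb{R}^m)^p$ (both are given by the same formula on $\alpha$-cuts via the Hausdorff metric). Hence the set-theoretic inclusion $\iota\colon (F_B(\mathbb{R}^m),d_p)\hookrightarrow (F_B(\mathbb{R}^m)^p,d_p)$ is automatically an isometric embedding; no nontrivial identification is needed.

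Next I would invoke Theorem~\ref{scp}, which gives that $(F_B(\mathbb{R}^m)^p,d_p)$ is complete, and Theorem~\ref{sln}, which gives that $F_B(\mathbb{R}^m)$ is dense in $(F_B(\mathbb{R}^m)^p,d_p)$. Together with the isometric embedding from the previous step, these two facts exactly match the characterizing properties of a completion. Therefore $(F_B(\mathbb{R}^m)^p,d_p)$ is the completion of $(F_B(\mathbb{R}^m),d_p)$.

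There is essentially no hard step here: the work has already been done in Theorems~\ref{scp} and \ref{sln}. The only subtlety worth a sentence is the observation that one does not need to invoke the abstract Cauchy-sequence construction of the completion and then identify it with $F_B(\mathbb{R}^m)^p$; uniqueness of the completion up to isometry makes the existence of a complete superspace in which $F_B(\mathbb{R}^m)$ sits densely sufficient. Thus the entire argument can be given in a couple of lines as a direct corollary of the preceding two theorems.
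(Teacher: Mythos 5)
Your proposal is correct and follows exactly the paper's route: the paper also derives this theorem as an immediate consequence of Theorem~\ref{scp} (completeness of $(F_{B}(\mathbb{R}^m)^p,d_p)$) and Theorem~\ref{sln} (density of $F_{B}(\mathbb{R}^m)$ in it). Your extra remark that the set-theoretic inclusion is already an isometric embedding is a harmless elaboration of what the paper leaves implicit.
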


\begin{proof} \ The desired result follows immediately from Theorems \ref{scp} and \ref{sln}.
 \end{proof}

Next, we exhibit some characterizations of totally bounded sets, relatively compact sets
and
compact sets in $(F_{B}   (  \mathbb{R}^m ), d_p)$ which are consequences of the conclusions in Section 4.

\begin{tm} Let
  $U \subset F_{B}   (  \mathbb{R}^m )$,
then $U$ is     totally bounded  in $ (F_{B}   (  \mathbb{R}^m ),   d_p)$
 if and only if
 \\
 (\romannumeral1) \  $U$ is uniformly $p$-mean bounded;
 \\
(\romannumeral2) \ $U$ is $p$-mean equi-left-continuous.
\end{tm}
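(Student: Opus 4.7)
The plan is to obtain this result as an immediate corollary of Theorem \ref{tcn}, by exploiting the fact that total boundedness of a subset is intrinsic: it depends only on the restricted metric on the subset, not on the ambient space. Since $F_{B}(\mathbb{R}^m) \subset F_{B}(\mathbb{R}^m)^p$ is an isometric subspace with respect to $d_p$, a subset $U \subset F_{B}(\mathbb{R}^m)$ is totally bounded in $(F_{B}(\mathbb{R}^m), d_p)$ if and only if it is totally bounded in $(F_{B}(\mathbb{R}^m)^p, d_p)$. Combining this with Theorem \ref{tcn} yields the result.

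For the necessity direction, I would simply observe that every finite $\varepsilon$-net for $U$ in $F_{B}(\mathbb{R}^m)$ is a fortiori a finite $\varepsilon$-net in $F_{B}(\mathbb{R}^m)^p$. Hence $U$ is totally bounded in the larger space, and conditions (\romannumeral1) and (\romannumeral2) follow from Theorem \ref{tcn}.

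For the sufficiency direction, assume (\romannumeral1) and (\romannumeral2). By Theorem \ref{tcn}, $U$ is totally bounded in $(F_{B}(\mathbb{R}^m)^p, d_p)$. Given $\varepsilon > 0$, choose a finite $(\varepsilon/2)$-net $\{v_1, \ldots, v_n\} \subset F_{B}(\mathbb{R}^m)^p$ for $U$. For each index $i$ with $B(v_i, \varepsilon/2) \cap U \neq \emptyset$, pick $u_i \in U \cap B(v_i, \varepsilon/2) \subset F_{B}(\mathbb{R}^m)$; discard the other indices. For any $u \in U$ there is some retained $v_i$ with $d_p(u, v_i) < \varepsilon/2$, and then by the triangle inequality $d_p(u, u_i) < \varepsilon$. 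Hence $\{u_i\}$ is a finite $\varepsilon$-net for $U$ in $F_{B}(\mathbb{R}^m)$, proving $U$ is totally bounded in $(F_{B}(\mathbb{R}^m), d_p)$.

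There is essentially no obstacle: the entire content is packaged in Theorem \ref{tcn}, and the remaining step is the standard fact that total boundedness of $U$ does not depend on the choice of ambient metric space. Alternatively, one can cite Theorem \ref{tcn} together with the observation used in its proof (which only exploits the Cauchy/totally-bounded structure and the density considerations already set up) to avoid even the short net-transfer argument.
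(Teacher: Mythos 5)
Your proposal is correct and follows essentially the same route as the paper: the paper likewise observes that $F_{B}(\mathbb{R}^m)\subset F_{B}(\mathbb{R}^m)^p$, notes that total boundedness of $U$ is the same in either ambient space, and invokes Theorem \ref{tcn}. Your extra net-recentering step is harmless but not even needed here, since the paper's definition of an $\varepsilon$-approximation already requires the approximating set to lie inside $U$, making total boundedness intrinsic by definition.
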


\begin{proof}  \
Note that $F_{B}   (  \mathbb{R}^m ) \subset F_{B}   (  \mathbb{R}^m )^p$,
we thus
know
$U\subset F_{B}   (  \mathbb{R}^m ) $ is a    totally bounded    set
in
 $(F_{B}   (  \mathbb{R}^m ),   d_p)$
if and only if
$U$ is a  totally bounded    set in $ (F_{B}   (  \mathbb{R}^m )^p, d_p)$.
So
the desired conclusion follows immediately
from
Theorem \ref{tcn}.
\end{proof}

Given $U \in F_{B}   (  \mathbb{R}^m )^p$,
for
writing convenience,
we use
$\overline{U} $ to denote the closure of $U$ in $(F_{B}   (  \mathbb{R}^m )^p, d_p)$.

\begin{tm} \label{fbrchra} Let
  $U \subset F_{B}   (  \mathbb{R}^m )$,
then $U$ is relatively compact in $ (F_{B}   (  \mathbb{R}^m ),   d_p)$
 if and only if
 \\
 (\romannumeral1) \  $U$ is uniformly $p$-mean bounded;
 \\
(\romannumeral2) \ $U$ is $p$-mean equi-left-continuous;
\\
(\romannumeral3) \  $\overline{U} \subset  F_{B}   (  \mathbb{R}^m )   $.
\end{tm}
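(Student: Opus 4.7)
The plan is to reduce this statement to Theorem \ref{pcn} by exploiting the subspace relationship $F_B(\mathbb{R}^m) \subset F_B(\mathbb{R}^m)^p$ together with the completeness of the ambient space established in Theorem \ref{scp}. The key observation is that, since $F_B(\mathbb{R}^m)$ sits isometrically inside $F_B(\mathbb{R}^m)^p$, a sequence $\{u_n\} \subset U \subset F_B(\mathbb{R}^m)$ is $d_p$-convergent in $(F_B(\mathbb{R}^m), d_p)$ if and only if it is $d_p$-convergent in $(F_B(\mathbb{R}^m)^p, d_p)$ to a limit that happens to lie in $F_B(\mathbb{R}^m)$. So relative compactness in the subspace is equivalent to the conjunction of (a) relative compactness in the bigger space, and (b) the closure $\overline{U}$ (taken in $F_B(\mathbb{R}^m)^p$) being contained in $F_B(\mathbb{R}^m)$.

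For the necessity direction, I would suppose $U$ is relatively compact in $(F_B(\mathbb{R}^m), d_p)$. Any sequence in $U$ then has a subsequence converging to a point of $F_B(\mathbb{R}^m) \subset F_B(\mathbb{R}^m)^p$, so $U$ is a fortiori relatively compact in $(F_B(\mathbb{R}^m)^p, d_p)$, and Theorem \ref{pcn} immediately yields (i) and (ii). For (iii), any $v \in \overline{U}$ is a $d_p$-limit of a sequence $\{u_n\} \subset U$; by relative compactness in the subspace, a subsequence of $\{u_n\}$ converges in $(F_B(\mathbb{R}^m), d_p)$, and uniqueness of limits forces $v \in F_B(\mathbb{R}^m)$.

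For the sufficiency direction, I would assume (i)--(iii) hold. Conditions (i) and (ii) together with Theorem \ref{pcn} give that $U$ is relatively compact in $(F_B(\mathbb{R}^m)^p, d_p)$. Thus every sequence $\{u_n\} \subset U$ contains a subsequence $d_p$-convergent to some $v \in F_B(\mathbb{R}^m)^p$. This limit lies in $\overline{U}$, and by (iii) we have $v \in F_B(\mathbb{R}^m)$. Hence the subsequence converges in $(F_B(\mathbb{R}^m), d_p)$, proving $U$ is relatively compact there.

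I do not expect any serious obstacle here: the theorem is essentially a general topological reformulation of Theorem \ref{pcn} for a (non-closed) subspace of its completion, and all the substantive analytic content has already been packed into the proof of Theorem \ref{pcn}. The only care needed is to keep track of which closure and which ambient space are being used, and to invoke the convention (set just before the statement) that $\overline{U}$ denotes the closure in $(F_B(\mathbb{R}^m)^p, d_p)$.
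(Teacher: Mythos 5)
Your proposal is correct and follows essentially the same route as the paper, which proves this theorem by noting that it "follows immediately from Theorem \ref{pcn} and the obvious fact that $F_{B}(\mathbb{R}^m)\subset F_{B}(\mathbb{R}^m)^p$." You have simply written out in full the subspace-versus-ambient-space bookkeeping that the paper leaves implicit.
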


\begin{proof}   \ The desired conclusion follows immediately
from
Theorem \ref{pcn} and
the obvious fact
 that
  $      F_{B}   (  \mathbb{R}^m )  \subset F_{B}   (  \mathbb{R}^m )^p$.
\end{proof}

\begin{tm} \label{fbchra} Let
  $U \subset F_{B}   (  \mathbb{R}^m )$,
then $U$ is   compact in $ (F_{B}   (  \mathbb{R}^m ),   d_p)$
 if and only if
 \\
 (\romannumeral1) \  $U$ is uniformly $p$-mean bounded;
 \\
(\romannumeral2) \ $U$ is $p$-mean equi-left-continuous;
\\
(\romannumeral3) \  $\overline{U} = U$.
\end{tm}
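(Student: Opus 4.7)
The plan is to deduce this characterization directly from Theorem \ref{pcn} (the characterization of relative compactness in the completion $(F_{B}(\mathbb{R}^m)^p, d_p)$) together with the standard fact that in a metric space compactness is equivalent to being both closed and relatively compact, and that compactness is an intrinsic property (independent of the ambient space in which $U$ is viewed).

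For the necessity direction, I would argue as follows. Assume $U$ is compact in $(F_{B}(\mathbb{R}^m), d_p)$. Since $F_{B}(\mathbb{R}^m) \subset F_{B}(\mathbb{R}^m)^p$ and the metric $d_p$ is the same, $U$ is also compact viewed as a subset of $(F_{B}(\mathbb{R}^m)^p, d_p)$. In particular $U$ is relatively compact there, so conditions (\romannumeral1) and (\romannumeral2) follow from Theorem \ref{pcn}. Moreover, a compact subset of the metric space $(F_{B}(\mathbb{R}^m)^p, d_p)$ is closed in that space, which gives $\overline{U} = U$, i.e.\ condition (\romannumeral3).

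For the sufficiency direction, assume (\romannumeral1), (\romannumeral2) and (\romannumeral3) hold. By Theorem \ref{pcn}, conditions (\romannumeral1) and (\romannumeral2) imply that $U$ is relatively compact in $(F_{B}(\mathbb{R}^m)^p, d_p)$, i.e.\ $\overline{U}$ is compact there. By (\romannumeral3), $\overline{U} = U$, so $U$ itself is compact in $(F_{B}(\mathbb{R}^m)^p, d_p)$. Since $U \subset F_{B}(\mathbb{R}^m)$ and compactness is intrinsic, $U$ is compact in $(F_{B}(\mathbb{R}^m), d_p)$ as well.

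There is no substantial obstacle here; the theorem is a short corollary of Theorem \ref{pcn} and the point to be careful about is the meaning of $\overline{U}$, which (per the convention stated just before Theorem \ref{fbrchra}) is the closure computed in the completion $(F_{B}(\mathbb{R}^m)^p, d_p)$ rather than in $(F_{B}(\mathbb{R}^m), d_p)$. This is precisely what lets condition (\romannumeral3) simultaneously encode closedness and the fact that no limit points escape $F_{B}(\mathbb{R}^m)$, so that one does not need to separately invoke Theorem \ref{fbrchra}.
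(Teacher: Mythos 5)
Your proposal is correct and follows essentially the same route as the paper, which derives this result immediately from Theorem \ref{gscn} (itself an immediate consequence of Theorem \ref{pcn} via the fact that compact equals closed plus relatively compact in a metric space). You have simply unfolded that one-line citation, correctly handling the convention that $\overline{U}$ denotes the closure in the completion $(F_{B}(\mathbb{R}^m)^p, d_p)$.
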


\begin{proof} \ The desired result follows immediately from Theorem \ref{gscn}. \end{proof}

Condition (\romannumeral3) in Theorem \ref{fbchra}
involves the closure of
$U$ in the completion space $(F_{B}   (  \mathbb{R}^m )^p, d_p)$.
We
 intend to find another characterization of compactness that depends only on $U$ itself,
which
is
the last result of this section.
To establish
this
new characterization of compactness,
we need
the following concept.

Let $B_r:=\{ x\in \mathbb{R}^m :     \|x\| \leq r    \}$,
where $r$ is a positive real number.
$\widehat{B_r}$ denotes the characteristic function
of $B_r$.
Given $u\in F_{B} (\mathbb{R}^m)$,
then
$u \vee \widehat{B_r}  \in F_{B} (\mathbb{R}^m)$.
 Define
$$    |  u | ^r :  = \left( \int_0^1   H(  [u \vee \widehat{B_r}   ]_\al,       [\widehat{B_r}  ]_\al    ) ^p \, d\al   \right) ^ {1/p}.
$$
It
 can be checked that, for $u\in F_{B} (\mathbb{R}^m)$, $ |  u | ^r =0$ if and only if
$[u]_0 \subseteq  B_r  $.
Note
that
$$  H(  [u \vee \widehat{B_r}  ]_\al,     [ v \vee \widehat{B_r}  ]_\al    ) \leq  H( [u]_\al,   [v]_\al      )     ,$$
it thus holds
that
\begin{equation}\label{nep}
  d_p(u,v) \geq  | |u|^r - |v|^r    |.
\end{equation}

\begin{tm} \label{fbsrchra} Let
  $U \subset F_{B}   (  \mathbb{R}^m )$,
then $U$ is relatively   compact in $ (F_{B}   (  \mathbb{R}^m ),   d_p)$
 if and only if $U$ satisfies
conditions (\romannumeral1), (\romannumeral2) in Theorem \ref{pcn}
and
the following condition (\romannumeral3$'$).
\begin{description}
  \item[(\romannumeral3$'$)]
Given $\{u_n: n=1,2,\ldots\} \subset U$, there exists a $r>0$ and a subsequence $\{v_n\}$ of $\{u_n\}$ such
that
$\lim_{n\to \infty}    |v_n|^r =0$.
\end{description}
\end{tm}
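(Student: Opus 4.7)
The plan is to reduce the statement to Theorem~\ref{fbrchra}, which already characterizes relative compactness in $(F_{B}(\mathbb{R}^m), d_p)$ by conditions (\romannumeral1), (\romannumeral2) together with the requirement $\overline{U} \subseteq F_{B}(\mathbb{R}^m)$, where the closure is taken in the completion $(F_{B}(\mathbb{R}^m)^p, d_p)$. Thus it suffices to show that, under (\romannumeral1) and (\romannumeral2), condition (\romannumeral3$'$) is equivalent to $\overline{U} \subseteq F_{B}(\mathbb{R}^m)$.

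For \emph{necessity}, suppose $U$ is relatively compact in $(F_{B}(\mathbb{R}^m), d_p)$. Conditions (\romannumeral1) and (\romannumeral2) are immediate from Theorem~\ref{pcn} (or Theorem~\ref{fbrchra}). To verify (\romannumeral3$'$), given any $\{u_n\}\subseteq U$, extract a subsequence $\{v_n\}$ converging in $d_p$ to some limit $v\in \overline{U}\subseteq F_{B}(\mathbb{R}^m)$. Since $[v]_0$ is then compact, choose $r>0$ large enough that $[v]_0 \subseteq B_r$, whence $|v|^r=0$. Inequality~\eqref{nep} then gives $|v_n|^r \leq |v|^r + d_p(v_n,v) = d_p(v_n,v)\to 0$.

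For \emph{sufficiency}, assume (\romannumeral1), (\romannumeral2) and (\romannumeral3$'$). I have to show every $v\in \overline{U}$ lies in $F_{B}(\mathbb{R}^m)$, i.e.\ that $[v]_0$ is bounded. Pick $\{u_n\}\subseteq U$ with $d_p(u_n,v)\to 0$. By (\romannumeral3$'$), extract a subsequence $\{v_n\}$ and some $r>0$ with $|v_n|^r\to 0$. The subtle point is that, a priori, $v$ only belongs to the completion $F_{B}(\mathbb{R}^m)^p$, so $|v|^r$ is not literally covered by the definition (which was stated for elements of $F_{B}(\mathbb{R}^m)$). To handle this I will work directly with $d_p(w\vee\widehat{B_r},\widehat{B_r})$, which coincides with $|w|^r$ when $w\in F_{B}(\mathbb{R}^m)$ and is well-defined for any $w\in F_{B}(\mathbb{R}^m)^p$. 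From the pointwise estimate
\[
H([w_1\vee\widehat{B_r}]_\alpha,[w_2\vee\widehat{B_r}]_\alpha)\leq H([w_1]_\alpha,[w_2]_\alpha),
\]
one obtains $d_p(v_n\vee\widehat{B_r},v\vee\widehat{B_r})\leq d_p(v_n,v)\to 0$, and then the triangle inequality forces $d_p(v\vee\widehat{B_r},\widehat{B_r})=\lim_n|v_n|^r=0$. This means $H([v]_\alpha\cup B_r,B_r)=0$ for a.e.\ $\alpha\in(0,1]$, i.e.\ $[v]_\alpha\subseteq B_r$ for a.e.\ $\alpha$, and by monotonicity of the $\alpha$-cuts together with Theorem~\ref{rfbp}(\romannumeral3) this forces $[v]_0\subseteq B_r$. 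Hence $v\in F_{B}(\mathbb{R}^m)$ and $\overline{U}\subseteq F_{B}(\mathbb{R}^m)$, so Theorem~\ref{fbrchra} gives relative compactness.

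The main obstacle is the last step: passing the scalar quantity $|\cdot|^r$ to a limit that a priori only lives in the completion $F_{B}(\mathbb{R}^m)^p$. This is resolved by viewing $|u|^r$ as the distance $d_p(u\vee\widehat{B_r},\widehat{B_r})$, which is defined on the whole completion and is $1$-Lipschitz in $u$ with respect to $d_p$; everything else is routine bookkeeping combined with the already-established Theorem~\ref{fbrchra}.
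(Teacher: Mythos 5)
Your proof is correct. The sufficiency half follows the paper's own argument almost verbatim: both use inequality \eqref{nep} (i.e.\ the $1$-Lipschitz property of $u \mapsto |u|^r$ with respect to $d_p$) to pass $|v_n|^r \to 0$ to the limit point $v$ in the completion and conclude $[v]_0 \subseteq B_r$; your explicit remark that $|\cdot|^r$ must first be extended to $F_{B}(\mathbb{R}^m)^p$ via $d_p(v \vee \widehat{B_r}, \widehat{B_r})$ actually patches a point the paper passes over silently, since the paper applies $|v|^k$ to an accumulation point that a priori lives only in the completion. The necessity half is where you genuinely diverge: the paper argues by contraposition, assuming (\romannumeral3$'$) fails and running a diagonal extraction over the radii $r = 1, 2, \ldots$ to manufacture a subsequence $\{v_n\}$ with $\liminf_{n}|v_n|^k \geq \varepsilon_k > 0$ for every $k$, whose accumulation point must then have unbounded support, contradicting relative compactness. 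You instead argue directly: a convergent subsequence has its limit $v$ in $F_{B}(\mathbb{R}^m)$, so $[v]_0 \subseteq B_r$ for a single $r$, and \eqref{nep} gives $|v_n|^r \leq |v|^r + d_p(v_n,v) \to 0$. Your version is shorter and avoids the diagonalization entirely; the paper's version makes concrete what failure of (\romannumeral3$'$) forces (cluster points with unbounded support), but for the theorem as stated your route is the more economical one, and routing the whole statement through Theorem \ref{fbrchra} rather than re-running the sequential criterion is a clean structural choice.
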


\begin{proof} \
Suppose
that
 $U$ is a relatively compact set but does not satisfy condition (\romannumeral3$'$).
Take
 $r=1$,
then
there exists $\varepsilon_1 >0$ and a subsequence $\{u_{n}^{(1)}: n=1,2,\ldots\}$
of
 $\{u_n: n=1,2,\ldots\}$
such
that
$ |u_n^{(1)}|^1 > \varepsilon_1$ for all $n=1,2,\ldots$.
If
  $\{u_n^{(1)}\}, \ldots, \{u_n^{(k)}\}$ and positive numbers $\varepsilon_1, \ldots,  \varepsilon_k$ have been chosen,
we
can
find
a subsequence $\{    u_n^{  (k+1)  }    \}$
of
$\{   u_n^ { (k) }   \}$
and
$\varepsilon_{k+1} >0$
such that
 $ |u_n^{(k+1)}|^{k+1} > \varepsilon_{k+1}$ for all $n=1,2,\ldots$.
Put $v_n=   u_n^{  (n) }   $ for $n=1,2,\ldots$.
Then $\{v_n\}$
is
a subsequence of
$\{u_n\}$
and
\begin{equation}\label{vne}
 \liminf_{n\to \infty}    |v_n|^k    \geq    \varepsilon_k
\end{equation}
for   $k=1,2,\ldots$.
Let $v \in  F_{B}   (  \mathbb{R}^m )^p$ be a accumulation point of $\{v_n\}$.
It then follows
from \eqref{nep} and \eqref{vne}
that
\begin{equation*}
   |v|^k    \geq   \varepsilon_k > 0
\end{equation*}
for all $k=1,2,\ldots$. So we know $v \notin  F_{B}   (  \mathbb{R}^m ) $.
This contradicts
the fact
that
 $U$ is a relatively compact set in $(  F_{B}   (  \mathbb{R}^m ), d_p  )$.

Suppose that $U \subset (  F_{B}   (  \mathbb{R}^m ), d_p  )$ satisfies condition (\romannumeral3$'$).
Given a sequence
$\{u_n\}$ in
$U$ with $\lim_{n\to \infty} u_n = u \in F_{B}   (  \mathbb{R}^m )^p$.
Then, from
\eqref{nep},
there exists a $r>0$ such
that
$\lim_{n\to \infty}    |u_n|^r =|u|^r =0$.
Hence $[u]_0 \subseteq  B_r   $,
i.e.
$u \in F_{B}   (  \mathbb{R}^m )$.
So,
by Theorem \ref{pcn}, we know
that
 if $U$ meets conditions (\romannumeral1), (\romannumeral2) and (\romannumeral3$'$),
then
$U$ is a relatively compact set in $(  F_{B}   (  \mathbb{R}^m ), d_p)$.
\end{proof}

\begin{tm} \label{fbschra} Let
  $U$ be a set in $ F_{B}   (  \mathbb{R}^m )$,
then $U$ is compact in
$ (F_{B}   (  \mathbb{R}^m ),   d_p)$
 if and only if $U$ is closed in
$ (F_{B}   (  \mathbb{R}^m ),   d_p)$ and satisfies
conditions (\romannumeral1), (\romannumeral2)
and
 (\romannumeral3$'$)
 in Theorem \ref{fbsrchra}.
\end{tm}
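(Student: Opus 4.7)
The plan is to derive this compactness characterization as a direct consequence of Theorem \ref{fbsrchra}, exploiting the basic fact from metric topology that a subset of a metric space is compact if and only if it is both closed and relatively compact.

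For the necessity direction, I would start by assuming $U$ is compact in $(F_B(\mathbb{R}^m), d_p)$. Every compact subset of a metric space is closed, so $U$ is closed in $(F_B(\mathbb{R}^m), d_p)$. Moreover, compactness implies relative compactness, so $U$ is relatively compact in $(F_B(\mathbb{R}^m), d_p)$; Theorem \ref{fbsrchra} then hands us conditions (i), (ii) and (iii$'$) for free.

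For the sufficiency direction, I would assume $U$ is closed and that conditions (i), (ii) and (iii$'$) hold. Applying Theorem \ref{fbsrchra} directly yields that $U$ is relatively compact in $(F_B(\mathbb{R}^m), d_p)$, i.e. its closure in $(F_B(\mathbb{R}^m), d_p)$ is compact. Since $U$ is already closed in $(F_B(\mathbb{R}^m), d_p)$, the closure equals $U$ itself, hence $U$ is compact.

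I do not anticipate a genuine obstacle: the entire content of the theorem is packaged inside Theorem \ref{fbsrchra}, and the only additional ingredient is the textbook equivalence ``compact $\Longleftrightarrow$ closed $+$ relatively compact'' in a metric space. The only subtle point worth spelling out is that closedness is meant in $(F_B(\mathbb{R}^m), d_p)$ rather than in the completion $(F_B(\mathbb{R}^m)^p, d_p)$, but this is exactly what allows condition (iii$'$) (which keeps limit points inside $F_B(\mathbb{R}^m)$ via the uniform support bound furnished by the $|\cdot|^r$ estimate) to take over the role played by the $\overline{U}=U$ condition in Theorem \ref{fbchra}. Accordingly, the proof can be written in just a line or two, simply invoking Theorem \ref{fbsrchra}.
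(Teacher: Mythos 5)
Your proposal is correct and follows essentially the same route as the paper, which likewise derives the statement immediately from Theorem \ref{fbsrchra} via the standard equivalence ``compact $\Longleftrightarrow$ closed $+$ relatively compact'' in a metric (indeed Hausdorff) space. Your explicit remark that closedness is taken in $(F_{B}(\mathbb{R}^m), d_p)$ rather than in the completion, with condition (\romannumeral3$'$) preventing limit points from escaping $F_{B}(\mathbb{R}^m)$, is exactly the right point to flag and is consistent with the paper's intent.
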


\begin{proof} \ The desired result follows immediately from Theorem \ref{fbsrchra}. \end{proof}

\section{Subspaces of $ (F_{B}   (  \mathbb{R}^m )^p,   d_p)$}

We have
already
shown
that
$ (F_{B}   (  \mathbb{R}^m )^p,   d_p)$   is
the
completion
of
 $ (F_{B}   (  \mathbb{R}^m ),   d_p)$.
We
might
expect
that
$ (E^{m,p}, d_p)$, $(S_0^{m,p},d_p)$, $(S^{m,p},d_p)$ and   $(\widetilde{S}^{m,p}, d_p) $
are the completions
of
$ (E^{m}, d_p)$, $(S_0^{m}, d_p)$, $(S^{m}, d_p)$ and   $(\widetilde{S}^{m}, d_p) $, respectively.
In this section, we
prove that this is true
and
discuss
 relationship among all these fuzzy sets spaces.
The conclusions are summarized in
a figure.
Then,
by using these conclusions   and the results in Sections \ref{crfpat} and \ref{relation},
we
gives characterizations of totally boundedness, relatively compactness and compactness
of all subspaces
   of
 $ (F_{B}   (  \mathbb{R}^m )^p,   d_p)$ mentioned in this paper.

First, we give a series of conclusions on relationships among subspaces
of
 $ (F_{B}   (  \mathbb{R}^m )^p,   d_p)$,
which
will
be summarized in a figure.

\begin{tm} \label{gsmc}
$\widetilde{S}^{m,p}$ is a closed set in $ (F_{B}   (  \mathbb{R}^m )^p,   d_p)$.

\end{tm}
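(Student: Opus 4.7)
The plan is to take a sequence $\{u_n\}\subset\widetilde{S}^{m,p}$ with $d_p(u_n,u)\to 0$ for some $u\in F_B(\mathbb{R}^m)^p$ and show that $u\in\widetilde{S}^{m,p}$. By Theorem \ref{rsp} applied to $u\in F_B(\mathbb{R}^m)^p$, the conditions (i), (ii), (iv-2) of membership in $\widetilde{S}^{m,p}$ are already satisfied, so the only thing to verify is the star-shapedness condition (iii-3), equivalently that $[u]_\alpha\in K_S(\mathbb{R}^m)$ for every $\alpha\in(0,1]$.

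First I would extract a subsequence: since $d_p(u_n,u)\to 0$ means $\int_0^1 H([u_n]_\alpha,[u]_\alpha)^p\,d\alpha\to 0$, a standard argument produces a subsequence (still denoted $\{u_n\}$) and a set $A\subset[0,1]$ of full Lebesgue measure such that $H([u_n]_\alpha,[u]_\alpha)\to 0$ for every $\alpha\in A$. Since each $u_n\in\widetilde{S}^{m,p}$ satisfies $[u_n]_\alpha\in K_S(\mathbb{R}^m)$ for $\alpha\in(0,1]$ (see Remark \ref{act}), and since Theorem \ref{ksc} tells us that $K_S(\mathbb{R}^m)$ is closed in $(K(\mathbb{R}^m),H)$, we conclude $[u]_\alpha\in K_S(\mathbb{R}^m)$ for every $\alpha\in A\cap(0,1]$.

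It remains to upgrade this from ``a.e.\ $\alpha$'' to ``every $\alpha\in(0,1]$''. For the exceptional $\alpha\in(0,1]\setminus A$, I would choose a sequence $\gamma_n\in A\cap(0,\alpha)$ with $\gamma_n\uparrow\alpha$, which exists because $A$ has full measure. By Theorem \ref{rfbp}(ii), $[u]_\alpha=\bigcap_{\gamma<\alpha}[u]_\gamma$, and since $\{[u]_{\gamma_n}\}$ is a decreasing sequence of compact sets with this intersection equal to $[u]_\alpha$, Proposition \ref{mce} yields $H([u]_{\gamma_n},[u]_\alpha)\to 0$. Now $[u]_{\gamma_n}\in K_S(\mathbb{R}^m)$ for each $n$, so applying Theorem \ref{ksc} once more gives $[u]_\alpha\in K_S(\mathbb{R}^m)$.

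The main obstacle is precisely the passage from almost-every-$\alpha$ to every-$\alpha$, since $d_p$ convergence gives no a priori pointwise information at exceptional levels; resolving this by exploiting the left-continuity of the level-set map (Theorem \ref{rfbp}(ii)) together with closedness of $K_S(\mathbb{R}^m)$ under Hausdorff convergence is the crux. Once star-shapedness holds at every level $\alpha\in(0,1]$, condition (iii-3) is met, so $u\in\widetilde{S}^{m,p}$, completing the proof.
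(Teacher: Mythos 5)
Your proposal is correct and follows essentially the same route as the paper: reduce to showing $[u]_\alpha\in K_S(\mathbb{R}^m)$ for all $\alpha\in(0,1]$, get this for a.e.\ $\alpha$ from $d_p$-convergence together with the closedness of $K_S(\mathbb{R}^m)$ in $(K(\mathbb{R}^m),H)$ (Theorem \ref{ksc}), and handle the exceptional levels by approximating $[u]_\alpha=\bigcap_n[u]_{\gamma_n}$ from below with star-shaped levels and using Proposition \ref{mce}. Your version is in fact slightly more careful than the paper's, since you explicitly pass to an a.e.-convergent subsequence and explicitly invoke Proposition \ref{mce} before reapplying Theorem \ref{ksc}.
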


\begin{proof} \  It only need to show that each accumulation point of $\widetilde{S}^{m,p}$ belongs to
itself.
Given a sequence $\{u_n\}$ in   $\widetilde{S}^{m,p}$ with $\lim u_n =u \in F_{B}   (  \mathbb{R}^m )^p$,
then
 clearly
$H([u_n]_\alpha, [u]_\alpha) \st{   \mbox{a.e.}   }{\to}   0 \ (  [0,1]    )$.
Suppose
that
$\al \in (0,1]$. If $H([u_n]_\alpha, [u]_\alpha) \to   0 $,
then
by Theorem \ref{ksc},
$[u]_\al \in K_S(\mathbb{R}^m)$.
 If $H([u_n]_\alpha, [u]_\alpha) \not\to   0 $,
then
there exists a sequence
$\beta_n  \to \al-$
such that
$ [u]_{\beta_n} \in  K_S(\mathbb{R}^m)$.
Note that
$[u]_\al= \bigcap_{n} [u]_{\beta_n} $,
this implies
that
$ [u]_\alpha \in  K_S(\mathbb{R}^m)$.
So we have
$u \in \widetilde{S}^{m,p}$.
\end{proof}

\begin{tm} \label{slne}
  $\widetilde{S}^m$ is a dense set in $(\widetilde{S}^{m,p}, d_p)$.
\end{tm}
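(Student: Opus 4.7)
The plan is to mimic the proof of Theorem \ref{sln} verbatim, using the truncation $u_n := u^{(1/n)}$, and then check that this truncation preserves the ``general star-shaped'' property so that $u_n$ actually lies in $\widetilde{S}^m$ (not merely in $F_B(\mathbb{R}^m)$).

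First, given $u \in \widetilde{S}^{m,p}$, I would set $u_n := u^{(1/n)}$ and record its level sets exactly as in the proof of Theorem \ref{sln}: $[u_n]_\alpha = [u]_\alpha$ for $\alpha \geq 1/n$ and $[u_n]_\alpha = [u]_{1/n}$ for $\alpha \leq 1/n$. Since $u \in \widetilde{S}^{m,p}$, Theorem \ref{rsp} gives $[u]_{1/n} \in K_S(\mathbb{R}^m)$, so in particular $[u_n]_0 = [u]_{1/n}$ is compact; combined with normality and the level-set characterization of upper semi-continuity, this shows $u_n$ satisfies (i), (ii) and (iv-1). The key point is to verify (iii-3): for every $\alpha \in (0,1]$, $[u_n]_\alpha$ is either $[u]_\alpha$ (when $\alpha \geq 1/n$) or $[u]_{1/n}$ (when $\alpha \leq 1/n$), and both are star-shaped by property (iii-3) applied to $u$. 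Applying the representation theorem (Theorem \ref{rgslp}), this gives $u_n \in \widetilde{S}^m$.

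Next, I would invoke Remark \ref{undg}: since $\widetilde{S}^{m,p} \subset F_B(\mathbb{R}^m)^p$, the same computation in the proof of Theorem \ref{sln} shows
\[
d_p(u_n,u) \leq 2\left(\int_0^{1/n} H([u]_\alpha, \{0\})^p\,d\alpha\right)^{1/p},
\]
and the right-hand side tends to $0$ as $n\to\infty$ by absolute continuity of the Lebesgue integral (using that $\left(\int_0^1 H([u]_\alpha,\{0\})^p\,d\alpha\right)^{1/p} < +\infty$ because $u \in F_B(\mathbb{R}^m)^p$). Combining the two facts, $\{u_n\} \subset \widetilde{S}^m$ with $d_p(u_n,u)\to 0$, which establishes density.

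I do not expect any serious obstacle; the only subtlety is the check that the $1/n$-truncation preserves star-shapedness of every level set, and this is immediate because truncation only relabels a tail of level sets to a single pre-existing (star-shaped) level $[u]_{1/n}$. No new verification of the $d_p$-convergence is needed since the estimate in Theorem \ref{sln} is obtained working entirely inside $F_B(\mathbb{R}^m)^p$, of which $\widetilde{S}^{m,p}$ is a subspace.
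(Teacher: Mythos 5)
Your proposal is correct and follows essentially the same route as the paper: the paper also takes $u_n=u^{(1/n)}$, asserts $\{u_n\}\subset\widetilde{S}^m$, and concludes via Remark \ref{undg}. The only difference is that you spell out the (easy but worthwhile) verification that truncation preserves star-shapedness of every level set, which the paper leaves implicit.
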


\begin{proof} \ Given $u\in \widetilde{S}^{m,p}$. Put $u_n=u^{(1/n)}$, $n=1,2,\ldots$.
Then $\{u_n \}\subset \widetilde{S}^m$. From Remark \ref{undg},
we
know
that
$d_p(u_n, u) \to 0$ as $n\to \infty$.
So
 $\widetilde{S}^m$
is
dense
in $(\widetilde{S}^{m,p}, d_p)$. \end{proof}

\begin{tm} \label{sgmcde}
   $S^{m,p}$   is a closed subset of $(\widetilde{S}^{m,p}, d_p)$.
\end{tm}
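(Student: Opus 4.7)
The plan is to leverage the closedness of $\widetilde{S}^{m,p}$ in $(F_B(\mathbb{R}^m)^p, d_p)$ (Theorem \ref{gsmc}), so that any limit point $u$ of $\{u_n\} \subset S^{m,p}$ automatically lies in $\widetilde{S}^{m,p}$. What remains is to verify the distinguishing condition between $S^{m,p}$ and $\widetilde{S}^{m,p}$, namely $\bigcap_{\alpha\in(0,1]}\mbox{ker}\,[u]_\alpha \neq \emptyset$.

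Assume $\{u_n\} \subset S^{m,p}$ and $d_p(u_n, u) \to 0$ with $u \in \widetilde{S}^{m,p}$. For each $n$, pick $x_n \in \bigcap_{\alpha\in(0,1]}\mbox{ker}\,[u_n]_\alpha$. The first key step is to show $\{x_n\}$ has a convergent subsequence. Since $d_p(u_n,u) \to 0$, we have $H([u_n]_\alpha, [u]_\alpha) \to 0$ for a.e.\ $\alpha \in [0,1]$; fix some $\alpha_0 \in (0,1]$ on which this convergence holds. Then $\{[u_n]_{\alpha_0}\}$ is bounded in $(K(\mathbb{R}^m),H)$, and since $x_n \in [u_n]_{\alpha_0}$, the sequence $\{x_n\}$ is bounded in $\mathbb{R}^m$. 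Extract a subsequence $x_{n_k} \to x$.

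Next, I would show $x \in \mbox{ker}\,[u]_\beta$ for every $\beta$ at which $H([u_{n_k}]_\beta, [u]_\beta) \to 0$. This follows directly from Corollary \ref{kef} applied to the star-shaped sets $[u_{n_k}]_\beta$ (which are star-shaped since $u_{n_k}\in S^{m,p}\subset \widetilde{S}^{m,p}$), because $x \in \limsup_k \mbox{ker}\,[u_{n_k}]_\beta$. Hence $x \in \mbox{ker}\,[u]_\beta$ for a.e.\ $\beta \in (0,1]$. The final step is to upgrade this almost-everywhere statement to every $\alpha \in (0,1]$. For a given $\alpha \in (0,1]$, choose a sequence $\gamma_j \uparrow \alpha$ with $x \in \mbox{ker}\,[u]_{\gamma_j}$ for each $j$. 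By Theorem \ref{rsp}(ii), $[u]_\alpha = \bigcap_j [u]_{\gamma_j}$; since $x \in [u]_{\gamma_j}$ for all $j$, we get $x \in [u]_\alpha$, and for any $y \in [u]_\alpha \subseteq [u]_{\gamma_j}$, the segment $\overline{xy}$ lies in $[u]_{\gamma_j}$, hence in $[u]_\alpha$. This gives $x \in \bigcap_{\alpha \in (0,1]}\mbox{ker}\,[u]_\alpha$, so by Theorem \ref{rs0p}, $u \in S^{m,p}$.

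The main obstacle is the final upgrading step: almost-everywhere information on the kernels must be promoted to every level, which requires both the left-continuity of the cuts (Theorem \ref{rsp}(ii)) and the observation that membership in the kernel persists under intersection. The preceding boundedness argument for $\{x_n\}$ is delicate only insofar as one must pick a suitable $\alpha_0$, but any level in the (cocountable) set of good $\alpha$'s works.
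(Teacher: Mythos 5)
Your proposal is correct and follows essentially the same route as the paper: both arguments produce a candidate kernel point as a limit of points $x_n\in\mbox{ker}\,u_n$ (the paper phrases this via the nonempty set $\limsup_n \mbox{ker}\,u_n$, you via a convergent subsequence), both invoke Corollary \ref{kef} at the almost-every levels $\beta$ where $H([u_n]_\beta,[u]_\beta)\to 0$, and both then extend to all $\alpha\in(0,1]$ using the left-continuity of the cuts. The only (immaterial) difference is in that last step, where you use the identity $[u]_\alpha=\bigcap_j [u]_{\gamma_j}$ and the persistence of a common kernel point under nested intersections, while the paper instead applies Corollary \ref{kef} a second time to $H([u]_{\alpha_n},[u]_\alpha)\to 0$.
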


\begin{proof} \ To show that $S^{m,p}$ is a closed set in $( \widetilde{S}^{m,p},   d_p)$,
let
 $\{u_n\}$ be  a sequence in $S^{m,p}$ which converges to $u \in \widetilde{S}^{m,p}$,
we only need to prove
that
$u\in S^{m,p}$.

Since
$d_p(u_n, u)= \left(    \int_{0}^{1}  H([u_n]_\al,  [u]_\al)^p \; d\alpha    \right)^{1/p} \to 0$,
it holds
that
\begin{equation} \label{hue}
  H([u_n]_\al,  [u]_\al) \to 0 \ \hbox{a.e. on } \ [0,1].
\end{equation}
Hence
 $\{\mbox{ker}\; u_n: n=1,2,\ldots \}$ is a bounded set in $(K(\mathbb{R}^m), H)$,
and therefore
\begin{equation}\label{neu}
 \limsup_{n\to \infty}\mbox{ker}\;  u_n \not= \emptyset.
 \end{equation}
We assert that
 \begin{equation}\label{aceu}
\limsup_{n\to \infty}\mbox{ker}\;  u_n  \subset  \mbox{ker}\;  [u]_\al \ \hbox{for all} \
 \al\in (0,1].
\end{equation}
So, from \eqref{neu} and \eqref{aceu},
 we know
$$
\emptyset \not=\limsup_{n\to \infty}\mbox{ker}\;  u_n  \subset  \bigcap_{\al\in (0,1]} \mbox{ker}\;  [u]_\al = \mbox{ker}\;  u.
$$
It thus follows from
Theorem \ref{rs0p} that
$u\in S^{m,p}$.

Now we prove \eqref{aceu}.
 The
 proof
 is divided into two cases.
\\
\textbf{Case 1}. \ $\alpha\in (0,1]$ satisfies the condition that $H([u_n]_\al,  [u]_\al) \to 0$.
\\
In this case, by
 Corollary \ref{kef}, we have
that
$$
\limsup_{n\to \infty}\mbox{ker}\;  u_n \subset \limsup_{n\to \infty}\mbox{ker}\;  [u_n]_\al \subset \mbox{ker}\; [u]_\al .
$$
\textbf{Case 2}. \  $\alpha\in (0,1]$ satisfies the condition that $H([u_n]_\al,  [u]_\al) \not\to 0$.
\\
By \eqref{hue}, we know that there is a sequence $\alpha_n\in (0,1]$, $n=1,2,\ldots$,
such that
$\alpha_n \to \al-$
and
$H([u_n]_{\al_n},  [u]_{\al_n}) \to 0$.
From case 1, we obtain
that
\begin{equation}\label{kcn}
  \limsup_{n\to \infty}\mbox{ker}\;  u_n \subset \bigcap_{n=1}^{+\infty}   \mbox{ker}\; [u]_{\al_n}.
\end{equation}
Note that
$H( [u]_\al, [u]_{\al_n}     )   \to 0$,
so, by Corollary   \ref{kef},
\begin{equation}\label{anc}
  \limsup_{n\to \infty}\mbox{ker}\;  [u]_{\al_n} \subset   \mbox{ker}\; [u]_{\al},
\end{equation}
combined \eqref{kcn} and \eqref{anc}, we get that
$$ \limsup_{n\to \infty}\mbox{ker}\;  u_n  \subset   \mbox{ker}\; [u]_{\al}.  $$
\end{proof}

\begin{tl}\label{ukc}
  Suppose that $\{u_n\}$ is a sequence in $(S^{m,p}, d_p)$ and that $u\in \widetilde{S}^{m,p}$.
If $d_p(u_n, u) \to 0$, then $u\in S^{m,p}$
and
$\limsup_{n\to \infty}\mbox{ker}\;  u_n  \subset  \mbox{ker}\;  u$.
\end{tl}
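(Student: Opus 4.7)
The plan is to observe that both conclusions of the corollary are already essentially packaged inside the argument given for Theorem \ref{sgmcde}, and the proof should amount to extracting them. First I would handle the claim $u \in S^{m,p}$: since Theorem \ref{sgmcde} established that $S^{m,p}$ is a closed subset of $(\widetilde{S}^{m,p}, d_p)$, and since the hypothesis already places the limit $u$ inside $\widetilde{S}^{m,p}$ with $\{u_n\} \subset S^{m,p}$ and $d_p(u_n, u) \to 0$, closedness immediately gives $u \in S^{m,p}$.

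For the inclusion $\limsup_{n\to\infty} \mbox{ker}\; u_n \subset \mbox{ker}\; u$, I would appeal to the internal assertion \eqref{aceu} that was proved (in Cases 1 and 2) within the proof of Theorem \ref{sgmcde}, namely that
\[
\limsup_{n\to\infty} \mbox{ker}\; u_n \subset \mbox{ker}\; [u]_\alpha \quad \text{for every } \alpha \in (0,1].
\]
This step used only the convergence $d_p(u_n, u) \to 0$, Proposition \ref{hms}, and Corollary \ref{kef}, all of which remain available under the current hypotheses. Intersecting this inclusion over $\alpha \in (0,1]$ and invoking the definition $\mbox{ker}\; u = \bigcap_{\alpha \in (0,1]} \mbox{ker}\; [u]_\alpha$ delivers the desired containment $\limsup_{n\to\infty} \mbox{ker}\; u_n \subset \mbox{ker}\; u$.

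Since the substantive work (handling the two cases according to whether $H([u_n]_\alpha, [u]_\alpha) \to 0$) has already been carried out in Theorem \ref{sgmcde}, I do not anticipate any genuine obstacle in this corollary; the only care needed is to point out explicitly that the inclusion \eqref{aceu} holds for \emph{every} $\alpha \in (0,1]$ (not merely for a.e.\ $\alpha$), so that the intersection over all $\alpha$ is legitimate. The proof can therefore be written as a short two-sentence deduction referring back to Theorem \ref{sgmcde} and its intermediate inclusion.
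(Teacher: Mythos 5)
Your proposal is correct and follows essentially the same route as the paper, which simply notes that the corollary is extracted from the proof of Theorem \ref{sgmcde}: the membership $u\in S^{m,p}$ comes from the closedness established there (equivalently, from the nonemptiness of $\limsup_{n\to\infty}\mbox{ker}\;u_n\subset\mbox{ker}\;u$), and the kernel inclusion is exactly the internal assertion \eqref{aceu} intersected over all $\alpha\in(0,1]$. Your remark that \eqref{aceu} holds for \emph{every} $\alpha\in(0,1]$ (via Case 2) is precisely the point that makes the intersection legitimate.
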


\begin{proof} \ The desired result follows immediately from the proof of Theorem \ref{sgmcde}. \end{proof}

\begin{tl}
  $(S^m, d_p)$ is a closed subspace of  $(\widetilde{S}^m, d_p)$.
\end{tl}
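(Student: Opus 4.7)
The plan is to derive the corollary directly from the preceding Corollary \ref{ukc}, which already handles the essential star-shapedness issue at the level of the larger $L_p$-extension spaces. The point is that $S^m$ and $\widetilde{S}^m$ sit inside their $L_p$-extensions $S^{m,p}$ and $\widetilde{S}^{m,p}$, and the $d_p$-metric is the same on all of them, so a convergent sequence in the smaller setting is automatically a convergent sequence in the larger one.

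Concretely, I would take $\{u_n\} \subset S^m$ with $d_p(u_n, u) \to 0$ for some $u \in \widetilde{S}^m$, and I need to show $u \in S^m$. Since $S^m \subseteq S^{m,p}$ and $\widetilde{S}^m \subseteq \widetilde{S}^{m,p}$, Corollary \ref{ukc} applies and yields $u \in S^{m,p}$; in particular $\ker u = \bigcap_{\alpha \in (0,1]} \ker [u]_\alpha \neq \emptyset$, so $u$ is fuzzy star-shaped in the sense of (\romannumeral3-2). Combining this with the facts that $u$ already satisfies the normality (\romannumeral1), upper semi-continuity (\romannumeral2), and the bounded-support condition (\romannumeral4-1) by virtue of being in $\widetilde{S}^m$, the representation Theorem \ref{rslp} (or the definition itself) lets me conclude $u \in S^m$.

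There is essentially no obstacle here — the work has all been done in Theorem \ref{sgmcde} and Corollary \ref{ukc}. The only thing to be mindful of is to invoke $u \in \widetilde{S}^m$ (not merely $u \in \widetilde{S}^{m,p}$) to recover the compactness of $[u]_0$, since Corollary \ref{ukc} only delivers membership in $S^{m,p}$; the intersection $S^{m,p} \cap \widetilde{S}^m$ is exactly $S^m$. Thus the proof reduces to one sentence citing Corollary \ref{ukc} followed by the observation that the bounded-support hypothesis is preserved by assumption.
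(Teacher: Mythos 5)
Your proposal is correct and follows exactly the paper's route: the paper's own proof is the one-line "follows immediately from Corollary \ref{ukc}," and you have simply supplied the (correct) bookkeeping — namely that Corollary \ref{ukc} gives $u\in S^{m,p}$, hence $\mbox{ker}\,u\neq\emptyset$, and the compact-support condition (\romannumeral4-1) carried by $u\in\widetilde{S}^m$ then places $u$ in $S^{m,p}\cap\widetilde{S}^m=S^m$. No gaps.
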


\begin{proof} \ The desired result follows immediately from Corollary \ref{ukc}. \end{proof}

  \begin{tm} \label{sdln}
  $S^m$ is a dense set in $(S^{m,p}, d_p)$.
\end{tm}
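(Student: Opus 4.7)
The plan is to mimic the proof strategy of Theorem \ref{slne}, namely, for a given $u \in S^{m,p}$, take the approximating sequence $u_n := u^{(1/n)}$ and show that (a) each $u_n$ lies in $S^m$, and (b) $d_p(u_n, u) \to 0$ as $n \to \infty$.

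For (b), the convergence $d_p(u_n, u) \to 0$ is immediate from Remark \ref{undg}, which already establishes this for arbitrary $u \in F_B(\mathbb{R}^m)^p$; no additional work is required.

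For (a), the main point to verify is membership in $S^m$ via Theorem \ref{rslp}. The $\alpha$-cuts of $u_n$ are explicitly
\[
[u_n]_\alpha = \begin{cases} [u]_\alpha, & \alpha \geq 1/n, \\ [u]_{1/n}, & 0 \leq \alpha \leq 1/n, \end{cases}
\]
so $[u_n]_0 = [u]_{1/n} \in K(\mathbb{R}^m)$ (giving the compact support condition (\romannumeral4-1)). The essential step is to produce a common star-center: since $u \in S^{m,p}$, Theorem \ref{rs0p} tells us $\bigcap_{\alpha \in (0,1]} \mbox{ker}\; [u]_\alpha \neq \emptyset$, so I pick any $x_0$ in this intersection. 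For $\alpha \geq 1/n$, we have $x_0 \in \mbox{ker}\; [u]_\alpha = \mbox{ker}\; [u_n]_\alpha$; for $0 < \alpha \leq 1/n$, we have $[u_n]_\alpha = [u]_{1/n}$ and again $x_0 \in \mbox{ker}\;[u]_{1/n}$. Thus $x_0 \in \bigcap_{\alpha \in (0,1]} \mbox{ker}\;[u_n]_\alpha$, and in particular each $[u_n]_\alpha \in K_S(\mathbb{R}^m)$. The usc-ness, normality and (\romannumeral2), (\romannumeral3) of the representation theorem are inherited from $u$ since $u_n$ is just a truncation. Hence Theorem \ref{rslp} yields $u_n \in S^m$.

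I do not anticipate any real obstacle: the proof is essentially the $S^m$-version of Theorem \ref{slne}, and the only extra ingredient beyond that proof is the observation that the common kernel point of $u$ survives under the $(1/n)$-truncation and serves simultaneously as a star-center for every $\alpha$-cut of $u_n$. Conclude by combining (a) and (b) to get that every $u \in S^{m,p}$ is the $d_p$-limit of elements of $S^m$.
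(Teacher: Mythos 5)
Your proposal is correct and follows exactly the route the paper intends: the paper's proof of this theorem is just the remark that it is ``similar to the proof of Theorem \ref{sln}'', i.e.\ truncate via $u_n=u^{(1/n)}$ and invoke the convergence already recorded in Remark \ref{undg}. The one genuinely new detail needed for the $S^m$-version --- that a point of $\bigcap_{\alpha\in(0,1]}\mbox{ker}\,[u]_\alpha$ remains a common star-center for all cuts of the truncation $u^{(1/n)}$ --- is exactly what you supply, so the argument is complete.
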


\begin{proof} \ The proof is similar to the proof of Theorem \ref{sln}. \end{proof}

\begin{tm} \label{vmce}
   $(E^{m,p}, d_p)$   is a closed subspace of $(S^{m,p}, d_p)$.
\end{tm}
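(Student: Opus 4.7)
The plan is to mimic the closedness argument of Theorem \ref{gsmc}, replacing the role of $K_S(\mathbb{R}^m)$ by $K_C(\mathbb{R}^m)$. Since the inclusion $E^{m,p} \subset S^{m,p}$ is immediate (convexity implies star-shapedness, and the integrability and upper semi-continuity conditions agree), it suffices to show that any $d_p$-limit $u \in S^{m,p}$ of a sequence $\{u_n\} \subset E^{m,p}$ actually lies in $E^{m,p}$. Because $u$ already satisfies conditions (i), (ii) and (iv-2) (as an element of $S^{m,p} \subset F_B(\mathbb{R}^m)^p$), the only thing left to verify is fuzzy convexity, i.e., $[u]_\alpha \in K_C(\mathbb{R}^m)$ for every $\alpha \in (0,1]$.

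First I would extract from $d_p(u_n,u)\to 0$ the pointwise convergence $H([u_n]_\alpha,[u]_\alpha)\to 0$ almost everywhere on $[0,1]$. Fix $\alpha \in (0,1]$. If $\alpha$ belongs to the set of good levels where $H([u_n]_\alpha, [u]_\alpha)\to 0$, then since each $[u_n]_\alpha \in K_C(\mathbb{R}^m)$ and $K_C(\mathbb{R}^m)$ is closed in $(K(\mathbb{R}^m), H)$ by Proposition \ref{kcs}, we immediately get $[u]_\alpha \in K_C(\mathbb{R}^m)$. Otherwise, pick a sequence $\beta_n \to \alpha-$ of good levels; then $[u]_{\beta_n} \in K_C(\mathbb{R}^m)$ for each $n$, and by Theorem \ref{rsp}(ii) (the left-continuity part of the representation) we have $[u]_\alpha = \bigcap_n [u]_{\beta_n}$. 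Since the intersection of convex sets is convex and $[u]_\alpha$ is already a nonempty compact set (Remark \ref{act}), it follows that $[u]_\alpha \in K_C(\mathbb{R}^m)$.

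Combining both cases yields $[u]_\alpha \in K_C(\mathbb{R}^m)$ for every $\alpha \in (0,1]$, which together with the inherited conditions (i), (ii), (iv-2) and the representation theorem (Theorem \ref{rep}) places $u$ in $E^{m,p}$. Therefore $E^{m,p}$ is closed in $(S^{m,p}, d_p)$.

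I do not anticipate a serious obstacle: the structural ingredients (closedness of $K_C(\mathbb{R}^m)$ in $(K(\mathbb{R}^m),H)$, the left-continuity of cut-maps, and the representation theorem for $E^{m,p}$) have all been assembled in Sections 2 and 3. The only mild subtlety is handling the "bad" levels where the Hausdorff convergence of cuts is not available; but this is precisely resolved by approaching $\alpha$ from below through good levels and using $[u]_\alpha = \bigcap_n [u]_{\beta_n}$, exactly as in the proof of Theorem \ref{gsmc}.
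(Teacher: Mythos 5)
Your proposal is correct and follows essentially the same route the paper intends: the paper's proof is only a one-line sketch invoking Proposition \ref{kcs} (closedness of $K_C(\mathbb{R}^m)$ in $(K(\mathbb{R}^m),H)$) together with the good-level/bad-level case split used in Theorems \ref{gsmc} and \ref{sgmcde}, and you have simply filled in those details, with the bad levels handled by $[u]_\alpha=\bigcap_n[u]_{\beta_n}$ and the fact that intersections of convex sets are convex.
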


\begin{proof} \ By Proposition \ref{kcs}, we know that $( K_C(\mathbb{R}^m), H )$
is
a closed set in $( K_S(\mathbb{R}^m), H )$.
In a way similar to the proof
of
Theorem \ref{sgmcde},
we
can
obtain
the desired result by using this fact. \end{proof}

\begin{tl}
   $(E^m, d_p)$ is a closed subspace of $(S^m, d_p)$.
\end{tl}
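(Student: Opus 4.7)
The plan is to deduce the corollary directly from Theorem \ref{vmce} by intersecting with the compact-support spaces. Suppose $\{u_n\}$ is a sequence in $E^m$ converging in $d_p$ to some $u \in S^m$. I would first use the inclusions $E^m \subset E^{m,p}$ and $S^m \subset S^{m,p}$ established earlier in Section 3 to regard $\{u_n\}$ as a sequence in $E^{m,p}$ with limit $u$ sitting inside $S^{m,p}$.

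Next, I would invoke Theorem \ref{vmce} to conclude that $u \in E^{m,p}$. This yields the crucial convexity information: $u$ satisfies condition (iii-1), i.e., $u$ is fuzzy convex (equivalently, every $\al$-cut of $u$ is convex for $\al \in (0,1]$, by Theorem \ref{rep}).

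Finally, I would combine this with the hypothesis $u \in S^m$, which already gives conditions (i), (ii) and (iv-1) (normality, upper semi-continuity and compact support). Putting these together, $u$ satisfies all four defining conditions of $E^m$, hence $u \in E^m$. Since $\{u_n\}$ was an arbitrary convergent sequence in $E^m$ with limit in $S^m$, this shows $E^m$ is closed in $(S^m, d_p)$.

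No real obstacle is expected: the argument is essentially a bookkeeping exercise, reducing the compact-support case to the already proved $L_p$-extension case by observing that ``closed in the larger ambient space'' plus ``the limit happens to have compact support'' delivers closedness in the smaller ambient space. The only thing to verify carefully is that the inclusions $E^m \subset E^{m,p}$ and $S^m \subset S^{m,p}$ are isometric embeddings with respect to $d_p$, but this is immediate from the definition \eqref{dpm}.
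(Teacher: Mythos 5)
Your argument is correct and is exactly the reasoning the paper compresses into ``follows immediately from Theorem \ref{vmce}'': a $d_p$-limit in $S^m$ of a sequence from $E^m$ lies in $E^{m,p}$ by that theorem, hence is fuzzy convex, and combined with membership in $S^m$ (which supplies normality, upper semi-continuity and compact support) it lies in $E^m$. No issues; the inclusions are indeed isometric since $d_p$ is defined by the same formula on all these spaces.
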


\begin{proof} \ The desired results follows immediately from Theorem \ref{vmce}. \end{proof}

\begin{tm} \label{edln}
  $E^m$ is a dense set in $(E^{m,p}, d_p)$.
\end{tm}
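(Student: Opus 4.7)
The plan is to follow exactly the template used in Theorem \ref{sln} and its extension Theorem \ref{slne}: given $u \in E^{m,p}$, I will construct an approximating sequence $u_n = u^{(1/n)}$ in $E^m$ and use absolute continuity of the Lebesgue integral to conclude $d_p(u_n,u) \to 0$.

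Concretely, for $u \in E^{m,p}$ and $n \in \mathbb{N}$, set $u_n := u^{(1/n)}$, so that
\[
[u_n]_\alpha = \begin{cases} [u]_\alpha, & \alpha \geq 1/n, \\ [u]_{1/n}, & 0 \leq \alpha \leq 1/n. \end{cases}
\]
The first step is to verify that $u_n \in E^m$. Normality and upper semicontinuity are inherited from $u$ through the representation theorems. By Remark \ref{act}, $[u]_\alpha \in K_C(\mathbb{R}^m)$ for every $\alpha \in (0,1]$, so every cut of $u_n$ is a nonempty compact convex set; in particular $[u_n]_0 = [u]_{1/n} \in K_C(\mathbb{R}^m)$ is bounded, giving condition (iv-1). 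The nesting $[u_n]_\beta = \bigcap_{\gamma < \beta}[u_n]_\gamma$ on $(0,1]$ and $[u_n]_0 = \overline{\bigcup_{\gamma>0}[u_n]_\gamma}$ follow directly from the corresponding properties of $u$ in Theorem \ref{rep} together with the piecewise definition above (the only non-immediate cases are $\alpha = 1/n$ and $\alpha = 0$, both of which reduce to $[u]_{1/n}$). Thus, by Proposition \ref{nr}, $u_n \in E^m$.

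For the $d_p$-convergence, I use the same chain as in the proof of Theorem \ref{sln}. Since $u \in E^{m,p}$,
\[
\Bigl( \int_0^1 H([u]_\alpha, \{0\})^p\,d\alpha \Bigr)^{1/p} < +\infty,
\]
and absolute continuity of the Lebesgue integral yields, for given $\varepsilon > 0$, a $\delta > 0$ with $\bigl(\int_0^\delta H([u]_\alpha,\{0\})^p\,d\alpha\bigr)^{1/p} < \varepsilon$. Because $[u_n]_\alpha = [u]_\alpha$ for $\alpha \geq 1/n$, one has
\[
d_p(u_n,u)^p = \int_0^{1/n} H([u]_{1/n}, [u]_\alpha)^p\,d\alpha,
\]
and Minkowski's inequality bounds this by $2\bigl(\int_0^{1/n} H([u]_\alpha,\{0\})^p\,d\alpha\bigr)^{1/p}$, which tends to $0$ as $n \to \infty$.

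I do not expect a real obstacle here; the argument is essentially a verbatim specialization of Theorem \ref{sln}. The only point that warrants explicit mention is that fuzzy convexity of $u_n$ is preserved because truncating $u$ above a threshold $1/n$ leaves the upper cuts unchanged and redefines the lower cuts to be the convex compact set $[u]_{1/n}$; convexity of every cut is thus inherited from Remark \ref{act}, so Proposition \ref{nr} applies cleanly.
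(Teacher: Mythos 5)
Your proposal is correct and follows exactly the route the paper intends: the paper's proof of this theorem simply defers to the argument of Theorem \ref{sln} (via Theorem \ref{sdln}), namely taking $u_n = u^{(1/n)}$, checking membership in $E^m$ via Remark \ref{act} and the representation theorem, and concluding $d_p(u_n,u)\to 0$ by absolute continuity of the Lebesgue integral. Your explicit verification that fuzzy convexity survives the truncation is exactly the one detail the specialization to $E^{m,p}$ requires, so nothing is missing.
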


\begin{proof} \ The proof is similar to the proof of Theorem \ref{sdln}. \end{proof}

\begin{tm} \label{gsc}
 $ (\widetilde{S}^{m,p}, d_p)$ is the completion of  $ (\widetilde{S}^{m}, d_p)$.
\end{tm}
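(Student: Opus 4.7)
The plan is to assemble the result from the two ingredients that a completion requires: completeness of the ambient space and density of the original space inside it. Both pieces are essentially already on the table from the preceding theorems in Section~6, so the work is just to point out how they combine.

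First I would verify that $(\widetilde{S}^{m,p}, d_p)$ is itself a complete metric space. Rather than reprove completeness from scratch for a Cauchy sequence in $\widetilde{S}^{m,p}$, the efficient route is to invoke Theorem~\ref{gsmc}, which says that $\widetilde{S}^{m,p}$ is a closed subset of $(F_{B}(\mathbb{R}^m)^p, d_p)$, together with Theorem~\ref{scp}, which gives completeness of the ambient space $(F_{B}(\mathbb{R}^m)^p, d_p)$. A closed subset of a complete metric space is complete in the induced metric, so $(\widetilde{S}^{m,p}, d_p)$ is complete.

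Second, the density of $\widetilde{S}^m$ in $(\widetilde{S}^{m,p}, d_p)$ is exactly Theorem~\ref{slne}. Combining these two facts, every Cauchy sequence in $\widetilde{S}^m$ has a $d_p$-limit that lies in $\widetilde{S}^{m,p}$ (by completeness), and conversely every point of $\widetilde{S}^{m,p}$ is a $d_p$-limit of a sequence from $\widetilde{S}^m$ (by density). Hence $(\widetilde{S}^{m,p}, d_p)$ satisfies the defining property of the completion of $(\widetilde{S}^m, d_p)$, which finishes the argument.

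There is really no main obstacle here; all the substantive work has already been absorbed into Theorems~\ref{scp}, \ref{gsmc}, and \ref{slne}. The only point to state clearly is the standard topological fact that a closed subspace of a complete metric space is complete, which is what lets us pass from completeness of $F_{B}(\mathbb{R}^m)^p$ to completeness of $\widetilde{S}^{m,p}$ without redoing the Cauchy-sequence analysis of Section~4.
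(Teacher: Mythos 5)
Your argument is correct and is exactly the paper's: the paper's proof cites precisely Theorems \ref{scp}, \ref{gsmc}, and \ref{slne}, combining closedness in the complete ambient space $(F_{B}(\mathbb{R}^m)^p, d_p)$ with density of $\widetilde{S}^m$. No differences to report.
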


\begin{proof} \ The desired result follows from Theorems \ref{scp}, \ref{gsmc}, \ref{slne}. \end{proof}

\begin{tm} \label{smc}
  $(S^{m,p}, d_p)$    is the completion of  $(S^m, d_p)$.
\end{tm}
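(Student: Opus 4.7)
The plan is to verify the two defining conditions for a completion: first, that $S^m$ sits densely in $(S^{m,p}, d_p)$, and second, that $(S^{m,p}, d_p)$ is itself a complete metric space. Once both are in hand, the result follows by the standard uniqueness of completions.

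For density, I would simply invoke Theorem \ref{sdln}, which already asserts that $S^m$ is dense in $(S^{m,p}, d_p)$, so nothing more is needed for this half.

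For completeness, the plan is to realize $(S^{m,p}, d_p)$ as a closed subspace of the complete metric space $(F_B(\mathbb{R}^m)^p, d_p)$, using the chain of closed inclusions already established. Specifically, Theorem \ref{gsmc} says $\widetilde{S}^{m,p}$ is closed in $(F_B(\mathbb{R}^m)^p, d_p)$, and Theorem \ref{sgmcde} says $S^{m,p}$ is closed in $(\widetilde{S}^{m,p}, d_p)$. Composing these, $S^{m,p}$ is a closed subset of $F_B(\mathbb{R}^m)^p$. By Theorem \ref{scp}, $(F_B(\mathbb{R}^m)^p, d_p)$ is complete, so a closed subset of it, equipped with the induced metric, is also complete. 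This gives completeness of $(S^{m,p}, d_p)$.

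Combining the two, $(S^{m,p}, d_p)$ is a complete metric space containing $(S^m, d_p)$ as a dense subspace, and is therefore (by the uniqueness of completions up to isometry) exactly the completion of $(S^m, d_p)$. I do not expect any real obstacle in this proof: every ingredient has already been proved in the excerpt, and the argument is the direct analogue of the one indicated for Theorem \ref{gsc}. The only thing to be careful about is making the closedness chain explicit, since the statement of Theorem \ref{sgmcde} gives closedness inside $\widetilde{S}^{m,p}$ rather than directly inside $F_B(\mathbb{R}^m)^p$.
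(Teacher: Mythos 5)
Your proof is correct and follows essentially the same route as the paper: the paper cites Theorems \ref{sgmcde}, \ref{sdln} and \ref{gsc}, i.e.\ closedness of $S^{m,p}$ in the complete space $(\widetilde{S}^{m,p}, d_p)$ together with density of $S^m$, which is exactly your argument except that you unwind the completeness of $\widetilde{S}^{m,p}$ one step further by composing the closedness chain down to $(F_B(\mathbb{R}^m)^p, d_p)$ and invoking Theorem \ref{scp} directly. No gap.
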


\begin{proof} \ The desired result follows from Theorems \ref{sgmcde}, \ref{sdln}, \ref{gsc}. \end{proof}

\begin{tm}\label{evmc}
  $(E^{m,p}, d_p)$    is the completion of  $(E^m, d_p)$.
\end{tm}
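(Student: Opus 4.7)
The plan is to assemble the result from the three ingredients that parallel exactly the structure of the proofs of Theorems \ref{gsc} and \ref{smc}: a completeness statement for the ambient space, a closedness statement embedding $E^{m,p}$ in it, and a density statement for $E^m$ inside $E^{m,p}$. Concretely, I will invoke Theorem \ref{smc} (so that $(S^{m,p}, d_p)$ is complete as the completion of $(S^m, d_p)$), Theorem \ref{vmce} (so that $(E^{m,p}, d_p)$ is a closed subspace of $(S^{m,p}, d_p)$), and Theorem \ref{edln} (so that $E^m$ is dense in $(E^{m,p}, d_p)$).

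The first step is completeness of $(E^{m,p}, d_p)$. A closed subset of a complete metric space is complete, so combining Theorems \ref{smc} and \ref{vmce} gives that any Cauchy sequence in $(E^{m,p}, d_p)$ converges to some limit in $S^{m,p}$, which by closedness lies in $E^{m,p}$. Thus $(E^{m,p}, d_p)$ is a complete metric space.

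The second step is to observe, by Theorem \ref{edln}, that $E^m$ is a dense subset of $(E^{m,p}, d_p)$. Combined with the completeness established above, this exhibits $(E^{m,p}, d_p)$ as a complete metric space in which $(E^m, d_p)$ embeds isometrically as a dense subspace, which is precisely the definition of the completion. There is essentially no obstacle here — the theorem is a formal consequence of results already proved. The only point that requires brief justification is that the completion is unique up to isometry, so identifying $(E^{m,p}, d_p)$ as \emph{the} completion of $(E^m, d_p)$ is legitimate once these two properties are in place.
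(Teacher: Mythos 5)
Your proposal is correct and follows exactly the same route as the paper, which proves this theorem by citing the same three ingredients (Theorems \ref{smc}, \ref{vmce}, \ref{edln}): completeness of $(E^{m,p},d_p)$ as a closed subspace of the complete space $(S^{m,p},d_p)$, plus density of $E^m$. You have merely spelled out the details the paper leaves implicit.
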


\begin{proof} \ The desired result follows from Theorems \ref{vmce}, \ref{edln}, \ref{smc}.     \end{proof}

From Corollary \ref{ukc} and Remark \ref{undg}, we
can
obtain the following two theorems.
\begin{tm}
  \label{sm0plc}
 $(S_0^{m,p}, d_p)$    is the completion of  $(S_0^m, d_p)$.
\end{tm}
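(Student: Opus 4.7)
The plan is to mirror the three preceding completion results for $\widetilde{S}^{m,p}$, $S^{m,p}$, and $E^{m,p}$: namely, establish that $S_0^{m,p}$ is a closed subspace of the complete space $(F_B(\mathbb{R}^m)^p,d_p)$, verify that $S_0^m$ is dense in $(S_0^{m,p},d_p)$, and then invoke the standard metric-space fact that a dense subspace of a closed (hence complete) subspace has that subspace as its completion. The hint in the excerpt that the proof uses Corollary \ref{ukc} and Remark \ref{undg} confirms this route.

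For closedness, let $\{u_n\}\subset S_0^{m,p}$ with $d_p(u_n,u)\to 0$ for some $u\in F_B(\mathbb{R}^m)^p$. Since $S_0^{m,p}\subset S^{m,p}$, Corollary \ref{ukc} gives $u\in S^{m,p}$ together with
\[
\limsup_{n\to\infty}\mathrm{ker}\,u_n \subset \mathrm{ker}\,u.
\]
Because each $u_n\in S_0^{m,p}$ satisfies $0\in\bigcap_{\alpha\in(0,1]}\mathrm{ker}\,[u_n]_\alpha=\mathrm{ker}\,u_n$, the constant sequence $0$ lies in $\limsup \mathrm{ker}\,u_n$, so $0\in\mathrm{ker}\,u=\bigcap_{\alpha\in(0,1]}\mathrm{ker}\,[u]_\alpha$. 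Together with the representation Theorem \ref{rs0ltp}, this yields $u\in S_0^{m,p}$.

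For density, take $u\in S_0^{m,p}$ and set $u_n:=u^{(1/n)}$. By Remark \ref{undg}, $u_n\in F_B(\mathbb{R}^m)$ with $d_p(u_n,u)\to 0$. It only remains to check that $u_n\in S_0^m$, and the only nontrivial condition is star-shapedness with respect to the origin. But $[u_n]_\alpha=[u]_\alpha$ when $\alpha\geq 1/n$ and $[u_n]_\alpha=[u]_{1/n}$ when $\alpha\leq 1/n$; since $0\in\mathrm{ker}\,[u]_\beta$ for every $\beta\in(0,1]$, the same holds for every $[u_n]_\alpha$, so $u_n\in S_0^m$.

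The main obstacle is the closedness step: one must extract from $d_p$-convergence the preservation of the pointwise condition ``$0$ lies in the kernel of every $\alpha$-cut,'' even though $d_p$-convergence only controls the cuts almost everywhere in $\alpha$. This is exactly where Corollary \ref{ukc} does the heavy lifting, since it packages the Kuratowski-type inheritance of kernels (via Theorem \ref{ksc} and Corollary \ref{kef}) from cuts where Hausdorff convergence holds to all cuts. Once closedness and density are in hand, combining Theorem \ref{scp} with the standard completion argument finishes the proof.
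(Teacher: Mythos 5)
Your proposal is correct and follows exactly the route the paper intends: the paper states that Theorem \ref{sm0plc} follows from Corollary \ref{ukc} (which yields closedness of $S_0^{m,p}$ via the kernel inclusion $0\in\limsup_{n}\mathrm{ker}\,u_n\subset\mathrm{ker}\,u$) and Remark \ref{undg} (which yields density of $S_0^m$ via the truncations $u^{(1/n)}$), and your write-up fills in precisely those two steps together with the standard closed-plus-dense completion argument.
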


\begin{tm}
  \label{sm0c}
 $(S^m_0, d_p)$ is a closed subspace of $(S^m, d_p)$.
\end{tm}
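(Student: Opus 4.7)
The plan is to prove closedness directly. Take an arbitrary sequence $\{u_n\} \subset S^m_0$ with $d_p(u_n, u) \to 0$ for some $u \in S^m$, and show that $u$ still lies in $S^m_0$, i.e.\ that $0 \in \mathrm{ker}\, u = \bigcap_{\alpha \in (0,1]} \mathrm{ker}\,[u]_\alpha$.

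The key observation is that all the machinery needed is already packaged in Corollary \ref{ukc}. Because $S^m_0 \subset S^m \subset S^{m,p}$ and $u \in S^m \subset \widetilde{S}^{m,p}$, the hypotheses of Corollary \ref{ukc} are met, and it yields the Kuratowski-type inclusion
\[
\limsup_{n\to\infty} \mathrm{ker}\, u_n \;\subset\; \mathrm{ker}\, u.
\]
So the proof reduces to noticing that $0$ lies in the left-hand side.

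This last step is immediate: by definition of $S^m_0$, $0 \in \mathrm{ker}\, u_n$ for every $n$, so the constant sequence $x_n = 0 \in \mathrm{ker}\, u_n$ converges to $0$, which places $0$ in $\limsup_{n\to\infty}\mathrm{ker}\, u_n$. Combined with the inclusion above, $0 \in \mathrm{ker}\, u$, giving $u \in S^m_0$ and completing the proof.

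There is essentially no analytic obstacle here; the only thing to be careful about is the bookkeeping of the ambient spaces needed to legitimately invoke Corollary \ref{ukc} (namely, recognizing $S^m \subset S^{m,p}$ on the sequence side and $S^m \subset \widetilde{S}^{m,p}$ on the limit side). Everything else is a one-line consequence of the Kuratowski-$\limsup$ inclusion already established for the $\mathrm{ker}$ operator under $d_p$-convergence.
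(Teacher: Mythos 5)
Your proof is correct and follows essentially the same route as the paper, which derives this theorem directly from Corollary \ref{ukc}; you have simply filled in the (easy) details that the constant sequence $x_n=0\in\mathrm{ker}\,u_n$ places $0$ in $\limsup_{n\to\infty}\mathrm{ker}\,u_n$, hence in $\mathrm{ker}\,u$. The inclusion bookkeeping ($S^m_0\subset S^m\subset S^{m,p}$ and $S^m\subset\widetilde{S}^{m,p}$) needed to invoke the corollary is exactly as you state.
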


%
\begin{figure}
  \centering
  \includegraphics{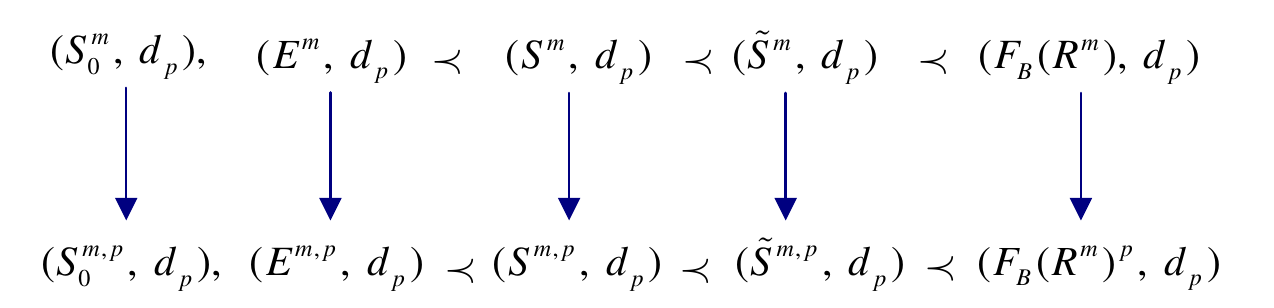}\\
  \caption{Relationship among various subspaces of $(F(\mathbb{R}^m)^p, d_p)$.\newline
 $A\prec B$ denotes that $A$ is a closed subspace of $B$ and
$A\longrightarrow B$ means that $B$ is the completion of $A$. }\label{sbpren}
\end{figure}
 Figure \ref{sbpren}  summarizes all the above results in this section.

 Kr\"{a}tschmer \cite{kratschmer} presented the completion of $(E^m, d_p)$
which is generated and described via
the support functions of fuzzy numbers.
In this paper,
we find that
 the completions of $(E^m, d_p)$, $(S_0^m, d_p)$, $(S^m, d_p)$, $(\widetilde{S}^m, d_p)$ and $(F_{B}   (  \mathbb{R}^m ), d_p)$
can be obtained
by means
of
$L_p$-extension.
To a certain extent, 
completions illustrated in this form are   more concise and clear and
so
more
 easy to perform theoretical research and practical application.

Now, based
on
these
 statements
about relationship among
 subspaces
of $(F(\mathbb{R}^m)^p, d_p)$,
and characterizations
of
compactness given in Sections 4 and 5,
we
intend
to give
characterizations
of totally bounded sets, relatively compact sets and compact sets
in
subspaces of $(F(\mathbb{R}^m)^p, d_p)$.

\begin{tm} \label{spcn}
Let
  $U \subset \widetilde{S}^{m,p}$ ($ U \subset E^{m,p}$, $ U \subset  {S}_0^{m,p}$, $  U \subset S^{m,p}$),
then $U$ is  totally bounded  if and only if it is relatively compact  in $ (\widetilde{S}^{m,p},   d_p)$ ($(E^{m,p}, d_p)$, $( {S}_0^{m,p}, d_p)$, $( S^{m,p}, d_p)$),
which is equivalent to
 \\
 (\romannumeral1) \  $U$ is uniformly $p$-mean bounded, and
 \\
(\romannumeral2) \ $U$ is $p$-mean equi-left-continuous.
\end{tm}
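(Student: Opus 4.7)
The plan is to reduce this theorem to the two characterizations in Theorems \ref{pcn} and \ref{tcn} already proved for the ambient space $(F_{B}(\mathbb{R}^m)^p, d_p)$. Each of the four spaces $\widetilde{S}^{m,p}$, $S^{m,p}$, $E^{m,p}$ and $S_0^{m,p}$ has been shown to be a closed subspace of $(F_{B}(\mathbb{R}^m)^p, d_p)$ via Theorems \ref{gsmc}, \ref{sgmcde}, \ref{vmce}, and (for $S_0^{m,p}$) Corollary \ref{ukc} together with the chain through $S^{m,p}$. Since $(F_{B}(\mathbb{R}^m)^p, d_p)$ is complete by Theorem \ref{scp}, each of these closed subspaces is itself a complete metric space.

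First I would note that total boundedness is intrinsic to the metric: for any metric subspace $X \subset F_{B}(\mathbb{R}^m)^p$ and any $U \subset X$, a finite subset $S \subset U$ is an $\varepsilon$-approximation of $U$ in $X$ if and only if it is an $\varepsilon$-approximation of $U$ in $F_{B}(\mathbb{R}^m)^p$, because the distances $d_p$ involved are unchanged. Applying Theorem \ref{tcn} to each of the four choices of $X$ then yields the totally bounded half of the characterization.

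Next, I would observe that since $X$ is closed in the complete space $F_{B}(\mathbb{R}^m)^p$, the closure $\overline{U}$ of $U$ in $X$ coincides with its closure in $F_{B}(\mathbb{R}^m)^p$. Consequently $U$ is relatively compact in $X$ if and only if it is relatively compact in $F_{B}(\mathbb{R}^m)^p$, and Theorem \ref{pcn} supplies the required characterization. Equivalently, one could appeal to the general fact that in a complete metric space totally bounded and relatively compact sets are the same, which gives the ``totally bounded $\Leftrightarrow$ relatively compact'' equivalence directly from the completeness of $X$.

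I do not expect any genuine obstacle here; the statement is essentially a corollary of the machinery developed in Sections \ref{crfpat}--\ref{relation} combined with the closedness results of the current section. The only point meriting a one-line verification is the closedness of $S_0^{m,p}$ in $F_{B}(\mathbb{R}^m)^p$, since this is not stated as a standalone theorem; it follows by combining Corollary \ref{ukc} (which governs kernels of limits in $S^{m,p}$) with the already established closedness of $S^{m,p}$ inside $\widetilde{S}^{m,p} \subset F_{B}(\mathbb{R}^m)^p$.
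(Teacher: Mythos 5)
Your proposal is correct and follows essentially the same route as the paper: the paper's proof likewise invokes the completeness of the four subspaces (established via the preceding completion/closedness theorems), the general fact that totally bounded and relatively compact sets coincide in a complete metric space, and Theorem \ref{pcn}. Your additional remarks — that total boundedness is intrinsic to the metric and that closures taken in a closed subspace agree with closures in the ambient space — are correct and merely make explicit what the paper leaves implicit.
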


\begin{proof} \ Note that in a complete space, a set is totally bounded if and only if it is relatively compact.
 So
the desired results follow from Theorems \ref{pcn}
and the completeness
of
$ (\widetilde{S}^{m,p},   d_p)$, $(E^{m,p}, d_p)$, $( {S}_0^{m,p}, d_p)$ and $( S^{m,p}, d_p)$. \end{proof}

Let
$U \subset \widetilde{S}^{m,p}$ ($U \subset E^{m,p}$, $U \subset {S}_0^{m,p}$, $U \subset S^{m,p}$).
It
is easy to see that
$\overline{U}$
is exactly
the closure of $U$
in $\widetilde{S}^{m,p}$ ($E^{m,p}$, ${S}_0^{m,p}$, $ S^{m,p}$).

\begin{tm} \label{smpcn} Let
  $U$ be a set in  $ (\widetilde{S}^{m,p},   d_p)$  ($(E^{m,p}, d_p)$, $( {S}_0^{m,p}, d_p)$, $( S^{m,p}, d_p)$),
then
$U$ is    compact  in
$ (\widetilde{S}^{m,p},   d_p)$ ($(E^{m,p}, d_p)$, $( {S}_0^{m,p}, d_p)$, $( S^{m,p}, d_p)$)
 if and only if
 \\
 (\romannumeral1) \  $U$ is uniformly $p$-mean bounded;
 \\
(\romannumeral2) \ $U$ is $p$-mean equi-left-continuous;
\\
 (\romannumeral3) \  $U= \overline{U}$.
\end{tm}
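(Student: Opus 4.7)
The plan is to derive Theorem \ref{smpcn} as an immediate corollary of Theorem \ref{spcn} together with the completeness of the four ambient spaces established earlier in this section (Theorems \ref{gsc}, \ref{smc}, \ref{evmc}, \ref{sm0plc}). The key observation is that in any metric space compactness is equivalent to being closed plus relatively compact, and in a complete metric space relative compactness coincides with total boundedness. Since each of $(\widetilde{S}^{m,p}, d_p)$, $(E^{m,p}, d_p)$, $({S}_0^{m,p}, d_p)$, $(S^{m,p}, d_p)$ is complete (being the completion of the corresponding space in Section 6), these standard equivalences are all available.

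For the necessity direction, I would argue that if $U$ is compact in one of these spaces, then $U$ is automatically closed, giving (iii), and $U$ is relatively compact, so Theorem \ref{spcn} applied in the same ambient space yields conditions (i) and (ii). For the sufficiency direction, assume (i)--(iii). Conditions (i) and (ii) together with Theorem \ref{spcn} show that $U$ is relatively compact in the same ambient space, meaning $\overline{U}$ (closure taken in that space) is compact. Condition (iii) then gives $U = \overline{U}$, so $U$ itself is compact.

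The only point that needs a brief justification is the remark preceding the theorem, namely that for $U$ contained in $\widetilde{S}^{m,p}$ (respectively $E^{m,p}$, ${S}_0^{m,p}$, $S^{m,p}$) the closure $\overline{U}$ taken in $(F_{B}(\mathbb{R}^m)^p, d_p)$ coincides with the closure taken in the respective subspace. This follows because each of these four subspaces is closed in $(F_{B}(\mathbb{R}^m)^p, d_p)$: indeed, $\widetilde{S}^{m,p}$ is closed by Theorem \ref{gsmc}, $S^{m,p}$ is closed in $\widetilde{S}^{m,p}$ by Theorem \ref{sgmcde}, $E^{m,p}$ is closed in $S^{m,p}$ by Theorem \ref{vmce}, and $S_0^{m,p}$ is closed in $S^{m,p}$ (this can be extracted from Corollary \ref{ukc} in the same way as Theorem \ref{sm0c}). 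Thus condition (iii) makes unambiguous sense and applies uniformly in the four parallel settings.

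I expect no genuine obstacle here: the theorem is essentially a repackaging of Theorem \ref{spcn} plus the elementary fact ``compact $\iff$ closed and relatively compact'' in a metric space. The substantive work was already done in Sections 4 and 6 (characterizing relative compactness and proving completeness); Theorem \ref{smpcn} is the bookkeeping step that combines them.
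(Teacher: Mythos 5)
Your proposal is correct and follows essentially the same route as the paper, which derives the theorem immediately from Theorem \ref{spcn} (relative compactness $=$ total boundedness in these complete spaces) together with the standard fact that compact $\iff$ closed and relatively compact. Your extra remark that $\overline{U}$, a priori the closure in $(F_{B}(\mathbb{R}^m)^p, d_p)$, agrees with the closure in the respective subspace because each subspace is closed in $(F_{B}(\mathbb{R}^m)^p, d_p)$ is exactly the observation the paper records just before the theorem statement.
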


\begin{proof} \ The desired results follow immediately from Theorem \ref{spcn}. \end{proof}

\begin{tm} Let
  $U \subset \widetilde{S}^{m}$ ($U \subset E^m$, $U \subset {S}_0^{m}$, $U \subset S^m$),
then $U$ is     totally bounded  in $ (\widetilde{S}^{m},   d_p)$ ($(E^{m},   d_p)$, $ (S_0^{m},   d_p)$, $ (S^{m},   d_p)$)
 if and only if
 \\
 (\romannumeral1) \  $U$ is uniformly $p$-mean bounded;
 \\
(\romannumeral2) \ $U$ is $p$-mean equi-left-continuous.
\end{tm}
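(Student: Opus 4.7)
The plan is to reduce the statement to the analogous theorem already established for the ambient complete spaces, using the fact that total boundedness of a subset is intrinsic: it depends only on the induced metric and is therefore inherited from any superspace.

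First I would observe that $\widetilde{S}^m \subset \widetilde{S}^{m,p} \subset F_B(\mathbb{R}^m)^p$ and likewise for the other three inclusions $E^m \subset E^{m,p}$, $S_0^m \subset S_0^{m,p}$, $S^m \subset S^{m,p}$, all as subspaces of $(F_B(\mathbb{R}^m)^p, d_p)$. For any subset $U$ of a metric space, the property of being totally bounded refers only to the existence of a finite $\varepsilon$-net drawn from $U$ itself and only uses the metric restricted to $U$. Consequently, for $U \subset \widetilde{S}^m$ the following are equivalent: $U$ is totally bounded in $(\widetilde{S}^m, d_p)$, $U$ is totally bounded in $(\widetilde{S}^{m,p}, d_p)$, $U$ is totally bounded in $(F_B(\mathbb{R}^m)^p, d_p)$; and analogously for the other three cases.

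Next I would invoke Theorem \ref{tcn} (or equivalently Theorem \ref{spcn}), which characterizes totally bounded subsets of $(F_B(\mathbb{R}^m)^p, d_p)$ as precisely those that are uniformly $p$-mean bounded and $p$-mean equi-left-continuous. Combining this with the hereditary observation above immediately yields both directions of the desired equivalence in each of the four cases.

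I do not anticipate a substantive obstacle: the entire statement is a straightforward consequence of the already-proved characterization in the ambient space, together with the general metric-space fact that total boundedness is preserved under passage to sub- or superspaces containing the set. The only thing to be careful about is to present the argument uniformly for all four parenthetical cases, which is handled by noting that conditions (i) and (ii) are formulated identically in each subspace and that the inclusions into $(F_B(\mathbb{R}^m)^p, d_p)$ are isometric.
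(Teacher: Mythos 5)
Your proposal is correct and matches the paper's argument: the paper likewise deduces this theorem directly from Theorem \ref{tcn} via the observation (made explicit in the preceding theorem for $F_{B}(\mathbb{R}^m)$) that total boundedness is intrinsic to $(U, d_p)$ and hence transfers between a space and any superspace containing $U$. No gaps.
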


\begin{proof} \ The desired conclusion follows immediately
from
Theorem \ref{tcn}. \end{proof}

\begin{tm} Let
  $U \subset \widetilde{S}^{m}$ ($U \subset E^m$, $U \subset {S}_0^{m}$, $U \subset S^m$),
then $U$ is    relatively compact in $ (\widetilde{S}^{m},   d_p)$ ($(E^{m},   d_p)$, $ (S_0^{m},   d_p)$, $ (S^{m},   d_p)$)
 if and only if
 \\
 (\romannumeral1) \  $U$ is uniformly $p$-mean bounded;
 \\
(\romannumeral2) \ $U$ is $p$-mean equi-left-continuous;
\\
 (\romannumeral3) \  $\overline{U} \subset \widetilde{S}^{m}$
($\overline{U} \subset E^{m}$,   $\overline{U} \subset {S}_0^{m}$, $\overline{U} \subset S^m$).
\end{tm}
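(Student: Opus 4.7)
The plan is to reduce the statement to the relative compactness criterion already established in the completion spaces. By Theorems \ref{gsc}, \ref{smc}, \ref{evmc} and \ref{sm0plc}, the spaces $(\widetilde{S}^{m,p}, d_p)$, $(E^{m,p}, d_p)$, $(S_0^{m,p}, d_p)$ and $(S^{m,p}, d_p)$ are the completions of $(\widetilde{S}^{m}, d_p)$, $(E^{m}, d_p)$, $(S_0^{m}, d_p)$ and $(S^{m}, d_p)$, respectively, so the symbol $\overline{U}$ unambiguously denotes the closure of $U$ taken in the corresponding completion. I will write out the case $U\subset \widetilde{S}^m$; the other three cases are identical mutatis mutandis (using Theorem \ref{spcn} for each completion, which is already stated uniformly for all four of them).

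For necessity, assume $U$ is relatively compact in $(\widetilde{S}^m, d_p)$. Since $\widetilde{S}^m \subset \widetilde{S}^{m,p}$ carries the same metric, $U$ is a fortiori relatively compact in the completion $(\widetilde{S}^{m,p}, d_p)$; then Theorem \ref{spcn} directly yields conditions (i) and (ii). For (iii), pick any $v\in \overline{U}$ and a sequence $\{u_n\}\subset U$ with $u_n\to v$ in $\widetilde{S}^{m,p}$; by hypothesis some subsequence of $\{u_n\}$ converges in $\widetilde{S}^m$, and uniqueness of limits forces that limit to be $v$, hence $v\in \widetilde{S}^m$.

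For sufficiency, assume (i), (ii) and (iii). Theorem \ref{spcn} (applied in the completion) gives that $U$ is relatively compact in $(\widetilde{S}^{m,p}, d_p)$. Given any sequence $\{u_n\}\subset U$, there is a subsequence $\{u_{n_k}\}$ and some $v\in \widetilde{S}^{m,p}$ with $u_{n_k}\to v$; clearly $v\in\overline{U}$, and then (iii) forces $v\in \widetilde{S}^m$. So every sequence in $U$ has a subsequence converging in $\widetilde{S}^m$, i.e. $U$ is relatively compact in $(\widetilde{S}^m, d_p)$.

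No genuine obstacle is expected: the argument is the exact structural analogue of the one already used in Theorem \ref{fbrchra}, and all the real work (the characterization of relative compactness in the completion, and the identification of the $L_p$-extensions as the completions) is already in place. The only point that must be handled cleanly is that condition (iii) is meaningful precisely because $\overline{U}$ is taken in the ambient completion, which is why we need the completion results of Section \ref{relation} before this theorem can even be stated.
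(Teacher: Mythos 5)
Your proposal is correct and follows the same route as the paper: the paper's proof is simply ``the desired conclusion follows immediately from Theorem \ref{spcn}'', i.e.\ it reduces the statement to the characterization of relative compactness in the completion spaces exactly as you do, and your write-up just fills in the routine subsequence arguments that the paper leaves implicit. Your handling of the necessity of (\romannumeral3) via uniqueness of limits and of the sufficiency via (\romannumeral1)--(\romannumeral2) in the ambient completion is the intended argument, mirroring Theorem \ref{fbrchra}.
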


\begin{proof} \ The desired conclusion follows immediately
from
Theorem \ref{spcn}. \end{proof}

\begin{tm} \label{gsmchra} Let
  $U \subset \widetilde{S}^{m}$ ($U \subset E^m$, $U \subset {S}_0^{m}$, $U \subset S^m$),
then $U$ is   compact in $ (\widetilde{S}^{m},   d_p)$ ($(E^{m}, d_p)$, $( {S}_0^{m}, d_p)$, $( S^{m}, d_p)$)
 if and only if
 \\
 (\romannumeral1) \  $U$ is uniformly $p$-mean bounded;
 \\
(\romannumeral2) \ $U$ is $p$-mean equi-left-continuous;
\\
(\romannumeral3) \  $\overline{U} = U$.
\end{tm}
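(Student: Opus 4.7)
The plan is to reduce this theorem directly to Theorem \ref{smpcn}, using the fact that compactness of a subset of a metric space is intrinsic: it depends only on the induced metric, not on the ambient space in which the subset is viewed. Since $\widetilde{S}^m \subset \widetilde{S}^{m,p}$, $E^m \subset E^{m,p}$, $S_0^m \subset S_0^{m,p}$, $S^m \subset S^{m,p}$, and the $d_p$ metric is the same in each case, $U$ is compact in $(\widetilde{S}^m, d_p)$ if and only if it is compact in $(\widetilde{S}^{m,p}, d_p)$, and analogously for the other three pairs.

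For the necessity direction, assume $U$ is compact in the subspace. Then $U$ is totally bounded in the ambient completion space $(F_B(\mathbb{R}^m)^p, d_p)$, so conditions (\romannumeral1) and (\romannumeral2) follow from Theorem \ref{tcn}. Moreover, since compact subsets of any metric space are closed, $U$ is closed in $(F_B(\mathbb{R}^m)^p, d_p)$, which gives $\overline{U} = U$ by the convention that $\overline{U}$ denotes closure in $(F_B(\mathbb{R}^m)^p, d_p)$.

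For the sufficiency direction, assume (\romannumeral1), (\romannumeral2), and $\overline{U} = U$. By Theorem \ref{pcn}, $U$ is relatively compact in $(F_B(\mathbb{R}^m)^p, d_p)$, so $\overline{U}$ is compact in $(F_B(\mathbb{R}^m)^p, d_p)$. Since $\overline{U} = U \subset \widetilde{S}^m$ (respectively $E^m$, $S_0^m$, $S^m$), $U$ is a compact subset of the corresponding subspace.

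There is no real obstacle here: the statement is essentially a specialization of Theorem \ref{smpcn} (and of Theorem \ref{fbchra}), and the whole argument hinges on the intrinsic nature of compactness together with the elementary fact that compact sets in a metric space are closed. The only point requiring mild attention is that $\overline{U}$ throughout refers to the closure in the largest ambient space $(F_B(\mathbb{R}^m)^p, d_p)$, which is legitimate precisely because the hypothesis $U \subset \widetilde{S}^m$ forces any accumulation point of $U$ in this ambient space to lie already in the target subspace once condition (\romannumeral3) holds.
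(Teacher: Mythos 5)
Your proof is correct and follows essentially the same route as the paper: the paper likewise invokes the intrinsic nature of compactness to reduce the statement to the compactness criterion in the ambient space $(F_{B}(\mathbb{R}^m)^p, d_p)$ (Theorem \ref{gscn}, which you have merely unpacked into Theorem \ref{pcn} plus the closedness of compact sets), together with the identification of condition (\romannumeral3) with closedness in that ambient space.
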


\begin{proof} \ The desired results follow from Theorem \ref{gscn} and the completeness
of
$ (\widetilde{S}^{m,p},   d_p)$, $(E^{m,p}, d_p)$, $( {S}_0^{m,p}, d_p)$ and $( S^{m,p}, d_p)$. \end{proof}

\begin{tm} \label{fbsrchrae} Let
  $U \subset \widetilde{S}^{m}$ ($U \subset E^m$, $U \subset {S}_0^{m}$, $U \subset S^m$),
then $U$ is relatively   compact in $(\widetilde{S}^{m}, d_p)$ ($( E^m, d_p)$, $({S}_0^{m}, d_p)$, $(S^m, d_p)$)
 if and only if
 \\
 (\romannumeral1) \  $U$ is uniformly $p$-mean bounded;
 \\
(\romannumeral2) \ $U$ is $p$-mean equi-left-continuous;
\\
(\romannumeral3$'$) \
Given $\{u_n: n=1,2,\ldots\} \subset U$, there exists a $r>0$ and a subsequence $\{v_n\}$ of $\{u_n\}$ such
that
$\lim_{n\to \infty}    |v_n|^r =0$.

\end{tm}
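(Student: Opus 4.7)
The plan is to follow the proof of Theorem \ref{fbsrchra} essentially verbatim, handling all four subspace cases simultaneously. Let $X$ denote any of $E^m$, $S_0^m$, $S^m$, $\widetilde{S}^m$ and let $X^p$ denote its $L_p$-extension ($E^{m,p}$, $S_0^{m,p}$, $S^{m,p}$, $\widetilde{S}^{m,p}$ respectively). By Theorems \ref{evmc}, \ref{sm0plc}, \ref{smc} and \ref{gsc}, $X^p$ is the completion of $X$, and by comparing the definitions in Section \ref{fsp} one sees that $X$ differs from $X^p$ only in the support requirement: the structural properties (\romannumeral3-1), (\romannumeral3-2$^0$), (\romannumeral3-2), (\romannumeral3-3) appear identically on both sides, while condition (\romannumeral4-1) replaces (\romannumeral4-2). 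Hence $X = \{u \in X^p : [u]_0 \text{ is bounded}\}$, which in view of the definition of $|\cdot|^r$ is the same as the condition that $|u|^r = 0$ for some $r > 0$.

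For the necessity direction, assume $U$ is relatively compact in $(X, d_p)$. Since $X \subset X^p$, $U$ is also relatively compact in $(X^p, d_p)$, so conditions (\romannumeral1) and (\romannumeral2) follow from Theorem \ref{spcn}. For (\romannumeral3$'$), given any sequence $\{u_n\} \subset U$, relative compactness in $X$ yields a subsequence $\{v_n\}$ converging in $d_p$ to some $v \in X$. Compact support of $v$ furnishes an $r > 0$ with $[v]_0 \subseteq B_r$, so $|v|^r = 0$, and inequality \eqref{nep} then gives $|v_n|^r \to 0$.

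For the sufficiency direction, assume (\romannumeral1), (\romannumeral2) and (\romannumeral3$'$). By Theorem \ref{spcn}, $U$ is relatively compact in $(X^p, d_p)$, so every sequence in $U$ admits a $d_p$-convergent subsequence with limit in $X^p$; it remains to show this limit belongs to $X$. Given $\{u_n\} \subset U$ with $u_n \to u \in X^p$, condition (\romannumeral3$'$) applied to $\{u_n\}$ produces $r > 0$ and a subsequence $\{v_n\}$ with $|v_n|^r \to 0$. Since $v_n \to u$ in $d_p$ and $|\cdot|^r$ is $d_p$-continuous by \eqref{nep}, we conclude $|u|^r = 0$, so $[u]_0 \subseteq B_r$, and thus $u \in X$ by the characterization in the first paragraph. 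This proves relative compactness in $(X, d_p)$.

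The only point that requires any care, and the main (mild) obstacle, is the structural observation $X = \{u \in X^p : |u|^r = 0 \text{ for some } r > 0\}$: one must check that no additional closure property (convexity, star-shapedness, existence of a common kernel point) can be lost when passing from $X^p$ to $X$. This is immediate because these properties have already been built into $X^p$ via the representation Theorems \ref{rep}, \ref{rs0ltp}, \ref{rs0p}, \ref{rsp}, so the only remaining gap is the bounded-support condition captured by $|\cdot|^r$. With this in place, the argument is a direct transcription of the proof of Theorem \ref{fbsrchra}.
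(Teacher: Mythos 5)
Your proposal is correct and follows essentially the same route as the paper: the paper's own proof simply notes that $\widetilde{S}^{m}=F_B(\mathbb{R}^m)\cap\widetilde{S}^{m,p}$ (and likewise for the other three spaces) and then invokes Theorems \ref{fbsrchra} and \ref{spcn}, which is exactly the content of your identity $X=\{u\in X^p: |u|^r=0 \text{ for some } r>0\}$ combined with your re-run of the $|\cdot|^r$ argument. The only cosmetic difference is that you prove the necessity of (\romannumeral3$'$) directly from a convergent subsequence rather than by the contradiction/diagonal argument used in Theorem \ref{fbsrchra}, which is a harmless (indeed slightly cleaner) variant.
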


\begin{proof} \ Note that $F_B(\mathbb{R}^m) \cap \widetilde{S}^{m,p} = \widetilde{S}^{m}$,
$F_B(\mathbb{R}^m) \cap {E}^{m,p} = {E}^{m}$,
$F_B(\mathbb{R}^m) \cap {S}_0^{m,p} = {S}_0^{m}$,
$F_B(\mathbb{R}^m) \cap {S}^{m,p} = {S}^{m}$,
so
we can obtain
the desired results by applying Theorems \ref{fbsrchra} and \ref{spcn}. \end{proof}

\begin{tm} \label{fbschrae} Let
  $U$ be a set in  $\widetilde{S}^{m}$ ($ E^m$, ${S}_0^{m}$, $S^m$),
then $U$ is compact in
 $(\widetilde{S}^{m}, d_p)$ ($( E^m, d_p)$, $({S}_0^{m}, d_p)$, $(S^m, d_p)$)
 if and only if $U$ is closed
in
$(\widetilde{S}^{m}, d_p)$ ($( E^m, d_p)$, $({S}_0^{m}, d_p)$, $(S^m, d_p)$) and satisfies
conditions (\romannumeral1), (\romannumeral2)
and
 (\romannumeral3$'$)
 in Theorem \ref{fbsrchrae}.
\end{tm}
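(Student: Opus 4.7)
The plan is to derive Theorem \ref{fbschrae} as an immediate corollary of Theorem \ref{fbsrchrae}, using the standard topological fact that in any metric space a subset is compact if and only if it is simultaneously closed and relatively compact. Since Theorem \ref{fbsrchrae} already characterizes relative compactness in each of the spaces $(\widetilde{S}^{m}, d_p)$, $(E^{m}, d_p)$, $({S}_0^{m}, d_p)$, $(S^m, d_p)$ by means of the three conditions (\romannumeral1), (\romannumeral2), (\romannumeral3$'$), combining these two observations yields the stated equivalence directly.

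More concretely, for necessity, suppose $U$ is compact in, say, $(\widetilde{S}^{m}, d_p)$. Then $U$ is closed in $(\widetilde{S}^{m}, d_p)$ because compact subsets of Hausdorff spaces are closed, and $U$ is trivially relatively compact (it equals its own closure, which is compact). Applying Theorem \ref{fbsrchrae} to $U$ yields that $U$ satisfies (\romannumeral1), (\romannumeral2) and (\romannumeral3$'$).

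For sufficiency, assume $U\subset \widetilde{S}^{m}$ is closed in $(\widetilde{S}^{m}, d_p)$ and satisfies (\romannumeral1), (\romannumeral2), (\romannumeral3$'$). Then Theorem \ref{fbsrchrae} gives that $U$ is relatively compact in $(\widetilde{S}^{m}, d_p)$, i.e., its closure in $(\widetilde{S}^{m}, d_p)$ is compact. Since $U$ is closed in $(\widetilde{S}^{m}, d_p)$, this closure equals $U$ itself, so $U$ is compact. The same argument works verbatim with $\widetilde{S}^{m}$ replaced by any of the subspaces $E^m$, $S_0^m$, $S^m$, since Theorem \ref{fbsrchrae} is stated uniformly for all four.

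There is no real obstacle here; the only subtlety worth stating once is that ``closed in $(\widetilde{S}^{m}, d_p)$'' (and similarly for the other three spaces) is the correct notion to pair with Theorem \ref{fbsrchrae}, because the characterization of relative compactness in that theorem is stated inside those subspaces rather than inside the ambient completion. Hence the proof reduces to a two-line invocation of Theorem \ref{fbsrchrae} together with the metric-space identity ``compact = closed + relatively compact''.
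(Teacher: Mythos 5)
Your proposal is correct and is exactly the argument the paper intends: its proof of this theorem is the one-line remark that the result follows immediately from Theorem \ref{fbsrchrae}, which is precisely your observation that in a metric space compactness equals closedness plus relative compactness. Your write-up merely makes that reduction explicit.
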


\begin{proof} \ The desired result follows immediately from Theorem \ref{fbsrchrae}. \end{proof}

\section{Applications of the results in this paper}
%
%
%

In this section, by using results in this paper,
we
consider some properties of $d_p$ convergence on fuzzy sets spaces.
Then
we
relook   the characterizations of compactness
presented
in
\cite{wu2,zhao}.



Now we discuss properties of $d_p$ convergence from the point of view of level sets of fuzzy sets.

Let
$\{u_k,\ k=1,2,\ldots\} \subset F_{GB}(\mathbb{R}^m)$ and $u\in F_{GB}(\mathbb{R}^m)$.
For writing convenience,
the symbol
$H([u_k]_\alpha, [u]_\alpha) \st{   \mbox{a.e.}   }{\to}   0 \; (P)$
is used
to
denote
that
$H([u_k]_\alpha, [u]_\alpha) \to 0$ as $k\to \infty$ holds almost everywhere on $\alpha\in P$.

\begin{lm}\label{dpce}
  Suppose that $\{u_k,\ k=1,2,\ldots\} \subset F_{GB}(\mathbb{R}^m)$, $u\in F_{GB}(\mathbb{R}^m)$ and  $\al\in (0,1]$.
Then
$d_p(u_k^{(\al)}, u    ) \to 0$
is equivalent to
\\
  (\romannumeral1) \ $H([u_k]_\beta, [u]_\beta) \st{   \mbox{a.e.}   }{\to} 0 \; ([\al,1])$.
\\
 (\romannumeral2) \  $H([u_k]_\alpha, [u]_\alpha) \to 0$ and $[u]_\gamma \equiv [u]_\al$ when $\gamma \in [0, \al]$.
\end{lm}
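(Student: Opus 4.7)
The plan is to exploit the explicit structure of the $\beta$-cuts of $u_k^{(\alpha)}$: directly from the definition, $[u_k^{(\alpha)}]_\beta = [u_k]_\beta$ for $\beta \in [\alpha, 1]$ and $[u_k^{(\alpha)}]_\beta = [u_k]_\alpha$ for $\beta \in [0, \alpha)$ (the case $\beta = 0$ uses that $[u_k]_\alpha$ is already closed). Splitting the $d_p$ integral at $\alpha$ gives the decomposition
\[
d_p(u_k^{(\alpha)}, u)^p \;=\; \underbrace{\int_\alpha^1 H([u_k]_\beta, [u]_\beta)^p\, d\beta}_{=:\,I_1(k)} \;+\; \underbrace{\int_0^\alpha H([u_k]_\alpha, [u]_\beta)^p\, d\beta}_{=:\,I_2(k)},
\]
and the strategy is to match $I_1(k)\to 0$ with (i) and $I_2(k)\to 0$ with (ii), noting that the first argument of the integrand in $I_2$ does not depend on $\beta$.

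For the forward direction I would assume $d_p(u_k^{(\alpha)}, u)\to 0$, so $I_1(k),\ I_2(k)\to 0$ separately. The $L^p$-smallness of $I_1$ on $[\alpha,1]$ yields (i) after passing to a subsequence (in the customary sense). From $I_2(k)\to 0$, extracting a further subsequence $\{k_j\}$ gives $H([u_{k_j}]_\alpha,[u]_\beta)\to 0$ for a.e.\ $\beta\in[0,\alpha)$; since $[u_{k_j}]_\alpha$ does not depend on $\beta$, the Hausdorff limits $[u]_\beta$ must agree for any two such $\beta$'s. Combined with the monotonicity of $\beta\mapsto[u]_\beta$ and the left-continuity $[u]_\alpha=\bigcap_{\gamma<\alpha}[u]_\gamma$ (from upper semicontinuity of $u$), this forces $[u]_\gamma=[u]_\alpha$ for all $\gamma\in[0,\alpha]$. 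Plugging this back, $I_2(k)=\alpha\,H([u_k]_\alpha,[u]_\alpha)^p\to 0$, and hence $H([u_k]_\alpha,[u]_\alpha)\to 0$ along the \emph{full} sequence, completing (ii).

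For the converse I assume (i) and (ii). The piece $I_2(k)$ is immediate: on $[0,\alpha]$ the identities $[u]_\beta=[u]_\alpha$ and $[u_k^{(\alpha)}]_\beta=[u_k]_\alpha$ collapse $I_2(k)$ to $\alpha\,H([u_k]_\alpha,[u]_\alpha)^p$, which tends to $0$ by (ii). For $I_1(k)$, (i) gives a.e.\ convergence of the integrand, and the upgrade to $L^p$ convergence comes from the dominated convergence theorem with the bound
\[
H([u_k]_\beta,[u]_\beta) \;\leq\; H([u_k]_\alpha,\{0\})+H([u]_\alpha,\{0\}) \;\leq\; M \qquad(\beta\in[\alpha,1]),
\]
which uses $[u_k]_\beta\subseteq[u_k]_\alpha$, $[u]_\beta\subseteq[u]_\alpha$, and that $\sup_k H([u_k]_\alpha,\{0\})$ is finite thanks to $H([u_k]_\alpha,[u]_\alpha)\to 0$ from (ii). Thus $I_1(k)\to 0$, and combining yields $d_p(u_k^{(\alpha)},u)\to 0$.

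The main subtlety I expect is the asymmetry between a.e.\ and $L^p$ convergence in condition (i): the forward implication delivers (i) only along a subsequence (the standard reading of the statement), while the reverse implication needs the uniform bound extracted from (ii) to promote a.e.\ convergence back to $L^p$ convergence via dominated convergence. The rigidity statement $[u]_\gamma=[u]_\alpha$ on $[0,\alpha]$ in (ii) is the genuinely new content: it encodes the fact that the approximants $u_k^{(\alpha)}$, whose cuts are ``flat'' below level $\alpha$, can only converge in $d_p$ to a limit whose own cuts share this flatness on $[0,\alpha]$.
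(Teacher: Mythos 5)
Your decomposition of $d_p(u_k^{(\alpha)},u)^p$ into $I_1(k)+I_2(k)$ is the right one, and your treatment of $I_2$ in both directions (including the rigidity $[u]_\gamma\equiv[u]_\alpha$ on $[0,\alpha]$) and of the converse implication for $I_1$ via dominated convergence are correct. However, there is a genuine gap in the forward direction for condition (\romannumeral1). The paper defines the notation $H([u_k]_\beta,[u]_\beta)\st{\mbox{a.e.}}{\to}0\;([\alpha,1])$ immediately before the lemma to mean that the \emph{full} sequence converges for almost every $\beta$; from $I_1(k)\to 0$ you derive only that some subsequence converges a.e., and you explicitly settle for this weaker statement (``the standard reading''). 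That is not what the lemma asserts, and the full-sequence version is genuinely needed downstream: for instance the implication from statement (\romannumeral1) to statement (\romannumeral2) in Corollary \ref{cccp} would fail if the lemma only delivered subsequential convergence. (Note the contrast with your handling of $I_2$, where the subsequence trick is harmless because the conclusion $[u]_\gamma\equiv[u]_\alpha$ does not depend on $k$ and the full-sequence convergence of $H([u_k]_\alpha,[u]_\alpha)$ is then recovered from $I_2(k)=\alpha\,H([u_k]_\alpha,[u]_\alpha)^p\to 0$; no such recovery is available for $I_1$.)

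The full-sequence statement is nevertheless true, and your argument can be repaired using the monotone structure of the level-cut maps, exactly as in Step~1 of the proof of Theorem \ref{pcn}. The map $\beta\mapsto[u]_\beta$ is nonincreasing and has at most countably many $H$-discontinuity points in $(\alpha,1)$ (the set $P(u)$ handled there via the appendix of \cite{huang9}). At any continuity point $\beta_0$, choose $\delta>0$ with $(\beta_0-\delta,\beta_0+\delta)\subset(\alpha,1)$ and $H([u]_\gamma,[u]_{\beta_0})<\varepsilon$ for $|\gamma-\beta_0|<\delta$; then for $\gamma_1\in(\beta_0-\delta,\beta_0)$ and $\gamma_2\in(\beta_0,\beta_0+\delta)$, the inclusions $[u_k]_{\beta_0}\subseteq[u_k]_{\gamma_1}$ and $[u_k]_{\gamma_2}\subseteq[u_k]_{\beta_0}$ give
\[
H([u_k]_{\beta_0},[u]_{\beta_0})\;\leq\;2\varepsilon+H([u_k]_{\gamma_1},[u]_{\gamma_1})+H([u_k]_{\gamma_2},[u]_{\gamma_2}),
\]
and taking the infimum of the right-hand side over $\gamma_1$ and $\gamma_2$ bounds it by $2\varepsilon+2\,\delta^{-1/p}I_1(k)^{1/p}$, whence $\limsup_k H([u_k]_{\beta_0},[u]_{\beta_0})\leq 2\varepsilon$ and so the whole sequence converges at $\beta_0$. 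With this repair your proof is complete; the paper itself gives no detailed argument for this lemma, so the split at $\alpha$ you chose is surely the intended route.
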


\begin{proof} \ The desired conclusion follows from the definition of $d_p$ metric and $u_k^{(\al)}$. \end{proof}

\begin{re}
  {\rm Take $\al\in (0,1]$. From Lemma \ref{dpce},
we
know
that
even if $\{u_k\}$ is a convergent sequence in $(F_{B}   (  \mathbb{R}^m )^p,  d_p)$,
$\{u_k^{(\al)}\}$
is not necessarily
 a convergent sequence in $(F_{B}   (  \mathbb{R}^m )^p,  d_p)$.
The following example is given
to
show
this fact.
This example is a small change of Example 4.1 in \cite{wu2}.
}
\end{re}

\begin{eap}
{\rm
  Consider a sequence $\{ u_n: n=1,2,\ldots   \}$ in $E^1$ defined by
\begin{align*}
      & u_n(x)=\left\{
         \begin{array}{ll}
           1-2x, & x\in [0, \frac{1 }{3}], \\
         \frac{1 }{3} -   \frac{3}{2(n+3)}\left(x-  \frac{1 }{3} \right)     , & x\in [\frac{1 }{3},  1], \\
           0, &  x \notin [0,1],
         \end{array}
       \right.
\ n=1,3,5,\ldots,
\\
 & u_n(x)=\left\{
         \begin{array}{ll}
           1-2x, & x\in [0, \frac{1 }{3}], \\
     \frac{1 }{3}, & x\in [ \frac{1 }{3},  \frac{2 }{3} ],\\
         \frac{1 }{3} -   \frac{3}{(n+3)}\left(x-  \frac{2 }{3} \right)     , & x\in [\frac{2 }{3},  1], \\
           0, &  x \notin [0,1],
         \end{array}
       \right.
\ n=2,4,6, \ldots.
\end{align*}
It can be checked that $d_p(u_n, u_0) \to 0$, where
\begin{align*}
   u_0(x)=\left\{
         \begin{array}{ll}
           1-2x, & x\in [0, \frac{1 }{3}], \\
         \frac{1 }{3}    , & x\in [\frac{1 }{3},  1], \\
           0, &  x \notin [0,1].
         \end{array}
       \right.
\end{align*}
So we know
that
$\{ u_n  \}$ is a convergent sequence in $(E^1, d_p)$.
But
$\{   u_n^{(\frac{1}{3})}   \}$
is
a
divergent sequence in $(E^1, d_p)$.
In fact,
we can see that
$d_p(u_{2n-1}^{(\frac{1}{3})}, v) \to 0$
and
$d_p(u_{2n}^{(\frac{1}{3})}, w) \to 0$,
where
\begin{align*}
   & v(x)=\left\{
         \begin{array}{ll}
           1-2x, & x\in [0, \frac{1 }{3}], \\
                  0, &  x \notin [0, \frac{1 }{3}],
         \end{array}
       \right.
\\
 & w(x)=\left\{
         \begin{array}{ll}
           1-2x, & x\in [0, \frac{1 }{3}], \\
         \frac{1 }{3}    , & x\in [\frac{1 }{3},   \frac{2 }{3} ], \\
           0, &  x \notin [0,1].
         \end{array}
       \right.
\end{align*}
Clearly, $v \not= w$.
Thus
$\{       u_n^{  (\frac{1}{3})  }       \}$ is not a convergent sequence in $(E^1, d_p)$.

}
\end{eap}

\begin{tm} \label{subscng}
 Suppose that  $U\subset  F_{GB}   (  \mathbb{R}^m )$ satisfying
that
 $\{[u]_\alpha: u\in U\}$
   is a bounded set in $K(\mathbb{R}^m)$ for each $\al \in (0,1]$.
Let
 $\{r_i, i=1,2,\ldots\}$ be a sequence in $(0,1]$.
Then
given $\{u_n\} \subset U$,
it has a subsequence
$\{ u_{n_k} \} $
satisfying the following two statements.
\\
 (\romannumeral1) \ There exists a $u_0\in  F_{GB}   (  \mathbb{R}^m )$
such that
$H([u_{n_k}]_\alpha, [u_0]_\alpha) \st{   \mbox{a.e.}   }{\to}   0 \; ([0,1])$.
\\
 (\romannumeral2) \ There exist $u_{r_i}, i=1,2,\ldots$, in $K   (  \mathbb{R}^m )$
such that
$H([u_{n_k}]_{r_i}, u_{r_i}) \to  0 $ for each $r_i$.

Thus
$d_p( u_{n_k}^{(r_i)} , u(r_i) ) \to 0$ for each $r_i$, where $ u(r_i) \in F_{B}   (  \mathbb{R}^m ) $, $i=1,2,\ldots$, is defined by
\[
[u(r_i)]_\al=\left\{
               \begin{array}{ll}
                 [u_0]_\al, & \al \in (r_i,1],  \\
                 u_{r_i}, & \al\in [0, r_i].
               \end{array}
             \right.
\]
\end{tm}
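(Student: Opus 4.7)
The statement is essentially a packaged version of Step 1 in the proof of Theorem \ref{pcn}, enlarged to track a prescribed countable set of levels $\{r_i\}$. The plan is to run the Cantor diagonal extraction on the countable set $S := (\mathbb{Q}\cap (0,1]) \cup \{r_i : i\in\mathbb{N}\}$, enumerated as $\{s_1,s_2,\ldots\}$. For each fixed $s_j$, the hypothesis says $\{[u]_{s_j}:u\in U\}$ is a bounded subset of $(K(\mathbb{R}^m),H)$, hence by Proposition \ref{sca} relatively compact. Extract a subsequence at level $s_1$ converging in $H$ to some $u_{s_1}\in K(\mathbb{R}^m)$, then a sub-subsequence at level $s_2$, and so on; the diagonal subsequence $\{u_{n_k}\}$ then satisfies $H([u_{n_k}]_{s_j},u_{s_j})\to 0$ for every $s_j\in S$. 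This is precisely conclusion (ii), since the $r_i$ are among the $s_j$.

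Next I would build $u_0$ exactly as in the proof of Theorem \ref{pcn}: set
\[
v_\alpha := \bigcap_{q\in\mathbb{Q},\,q<\alpha} u_q\ \ (\alpha\in(0,1]),\qquad v_0:=\overline{\bigcup_{\alpha>0}v_\alpha}.
\]
Proposition \ref{mce} gives $v_\alpha\in K(\mathbb{R}^m)$ for $\alpha>0$, and the family $\{v_\alpha\}$ is automatically nested, left-continuous, and satisfies the closure identity at $0$. An analogue of Theorem \ref{rfbe} produces a unique $u_0$ with $[u_0]_\alpha=v_\alpha$; because the zero-cut $v_0$ need not be bounded, we only claim $u_0\in F_{GB}(\mathbb{R}^m)$. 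Conclusion (i) then follows along exactly the same lines as Step 1 of Theorem \ref{pcn}: the set $P(u_0)$ of left-discontinuity levels is at most countable, and for $\alpha\in(0,1)\setminus P(u_0)$ one sandwiches $[u_{n_k}]_\alpha$ between $[u_{n_k}]_{q_1}$ and $[u_{n_k}]_{q_2}$ for rationals $q_1<\alpha<q_2$ chosen close to $\alpha$, and lets $k\to\infty$ and then $q_1,q_2\to\alpha$.

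For the final $d_p$ convergence, fix an $r_i$. Monotonicity of the cuts together with Hausdorff limits gives $u_{q}\subseteq u_{r_i}$ for rationals $q>r_i$ and $u_{r_i}\subseteq u_{q}$ for rationals $q<r_i$, so the family prescribed for $u(r_i)$ is nested and every cut sits inside the bounded compact set $u_{r_i}$; by Theorem \ref{rfbe}, $u(r_i)\in F_B(\mathbb{R}^m)$. The cuts of $u_{n_k}^{(r_i)}$ equal $[u_{n_k}]_\alpha$ for $\alpha\geq r_i$ and $[u_{n_k}]_{r_i}$ for $\alpha\leq r_i$. On $(r_i,1]$, (i) gives $H([u_{n_k}]_\alpha,[u_0]_\alpha)=H([u_{n_k}]_\alpha,[u(r_i)]_\alpha)\to 0$ a.e., while all these cuts are uniformly contained in the bounded set from the hypothesis at level $r_i$; Lebesgue dominated convergence then drives $\int_{r_i}^1 H(\cdot,\cdot)^p\,d\alpha\to 0$. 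On $[0,r_i]$ the integrand is the constant-in-$\alpha$ quantity $H([u_{n_k}]_{r_i},u_{r_i})^p$, which tends to $0$ by (ii). Assembling the two pieces via Lemma \ref{dpce} yields $d_p(u_{n_k}^{(r_i)},u(r_i))\to 0$.

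The main obstacle is not conceptual but bookkeeping: one must keep track of the possibility that $r_i\in P(u_0)$, in which case $u_{r_i}$ need not coincide with $[u_0]_{r_i}$. This is why the statement invokes $u_{r_i}$ (coming from the diagonal extraction) rather than $[u_0]_{r_i}$ in defining $u(r_i)$; once that distinction is absorbed, the whole argument is a direct adaptation of Step 1 of Theorem \ref{pcn} with one extra countable family of levels tacked onto the rational net.
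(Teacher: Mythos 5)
Your proposal is correct and follows essentially the same route as the paper: a diagonal extraction over a countable set of levels (the paper does it in two stages, first over the rationals as in Step 1 of Theorem \ref{pcn} and then over the $r_i$, while you merge them into one), followed by the construction of $u_0$ from the rational-level limits and the verification of $d_p(u_{n_k}^{(r_i)},u(r_i))\to 0$, which the paper delegates to Lemma \ref{dpce} and you reprove directly by splitting the integral at $r_i$. Your remark that $u_{r_i}$ need not equal $[u_0]_{r_i}$ correctly identifies why the statement is phrased as it is.
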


\begin{proof} \  Given $\{u_n\} \subset U$. Since
that
$\{[u]_\alpha: u\in U\}$
   is a bounded set in $K(\mathbb{R}^m)$ for each $\al \in (0,1]$,
then
from the proof of Theorem \ref{pcn},
we know that
$\{u_n\} $
has
a subsequence
$\{ u_{n_j} \} $
such
that
$H([u_{n_j}]_\alpha, [u_0]_\alpha) \st{   \mbox{a.e.}   }{\to}   0 \; ([0,1])$,
where
$u_0$ is    one of the elements
in
$ F_{GB}   (  \mathbb{R}^m )$.

Note that
for each $\alpha>0$,
$\{[u]_\alpha: u\in U\}$ is a relatively compact set in
$(K(\mathbb{R}^m), H)$.
Now, by using the
diagonal method
and
proceed according to the proof of Theorem \ref{pcn},
we
can choose a subsequence $\{u_{n_k}\}$ of $\{ u_{n_j} \} $
such
that
 $H([u_{n_k}]_{r_i}, u_{r_i}) \to  0 $ for each $r_i$,
  where
$\{u_{r_i}, i=1,2,\ldots\}$ is a sequence of elements
in    $ K   (  \mathbb{R}^m )$.

So $\{u_{n_k}\}$
is
a subsequence of $\{u_n\} $which
satisfies
 statements    (\romannumeral1) and (\romannumeral2).
Thus,
by Lemma \ref{dpce}, we know that $d_p( u_{n_k}^{(r_i)} , u(r_i) ) \to 0$ for all $r_i$.
\end{proof}

\begin{re} \label{subc}
{\rm
  Suppose that $U \subset F_{B}   (  \mathbb{R}^m )^p$ satisfies conditions (\romannumeral1)
 in Theorem \ref{pcn}, i.e.,
$U$ is uniformly $p$-mean bounded.
Let
 $\{r_i, i=1,2,\ldots\}$ be a sequence in $(0,1]$.
Then
given $\{u_n\} \subset U$,
there is a subsequence $\{u_{n_k}\}$ of $\{u_n\} $
such
that $\{ u_{n_k}^{(r_i)} \}$ converges
to
$ u(r_i) \in F_{B}   (  \mathbb{R}^m )  $ in $d_p$ metric
for each
$r_i$.

In fact, if $U \subset F_{B}   (  \mathbb{R}^m )^p$ satisfies conditions (\romannumeral1)
 in Theorem \ref{pcn}, then
$\{[u]_\alpha: u\in U\}$
   is a bounded set in $K(\mathbb{R}^m)$ for each $\al \in (0,1]$.
So above conclusion follows
immediately
from Theorem \ref{subscng}.

}
\end{re}

From    Theorem \ref{pcn} and Lemma \ref{dpce},
we
can obtain two corollaries.

\begin{tl}
\label{cccp}
  Suppose that $\{u_k,\ k=1,2,\ldots\} \subset F_{B}   (  \mathbb{R}^m )^p$
  and
  $u\in F_{GB}   (  \mathbb{R}^m )$.
  Then
 statements (\romannumeral1) and (\romannumeral2) listed below are equivalent.
  \\
  (\romannumeral1) \ There exists a decreasing sequence $\{r_i, i=1,2,\ldots\}$ in $(0,1]$
converging to zero such that $d_p( u_k^{(r_i)},   u^{(r_i)}    ) \to    0$ for all $r_i$.
  \\
  (\romannumeral2) \  $H([u_k]_\alpha, [u]_\alpha) \st{   \mbox{a.e.}   }{\to}   0 \ (  [0,1]    )$.

Furthermore, if  $U:=\{u_k,\ k=1,2,\ldots\} $ satisfies conditions (\romannumeral1)
and
(\romannumeral2) in Theorem \ref{pcn},
then above   statements (\romannumeral1) and (\romannumeral2) are equivalent
to
statement (\romannumeral3) below.
\\
  (\romannumeral3) \ $d_p( u_k,   u    ) \to    0$ and $u\in F_{B}   (  \mathbb{R}^m )^p$.

\end{tl}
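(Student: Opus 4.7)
My plan is to reduce the whole corollary to two ingredients: Lemma \ref{dpce} (for the equivalence of (i) and (ii)) and the relative compactness machinery of Theorem \ref{pcn} combined with a subsequence-of-subsequence argument (for the equivalence with (iii)). The key move for (i) $\Leftrightarrow$ (ii) is to apply Lemma \ref{dpce} not to $u$ but to $u^{(r_i)}$ in place of ``$u$'' (with $r_i$ in place of ``$\alpha$''); because $u^{(r_i)}$ by construction already has constant level sets on $[0,r_i]$, the constancy requirement of the lemma is automatic and it reads
\[
 d_p(u_k^{(r_i)},u^{(r_i)}) \to 0 \iff H([u_k]_\beta,[u]_\beta) \st{\mathrm{a.e.}}{\longrightarrow} 0\ ([r_i,1])\ \text{and}\ H([u_k]_{r_i},[u]_{r_i})\to 0.
\]
From this, (i) $\Rightarrow$ (ii) is obtained by taking the countable union $\bigcup_i [r_i,1]=(0,1]$ of the a.e.\ sets. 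Conversely, if (ii) holds with exceptional null set $A$, the complement $(0,1]\setminus A$ is dense, so I choose a decreasing sequence $r_i\in(0,1]\setminus A$ with $r_i\to 0$; both conditions in the equivalence then hold for each $r_i$, giving (i).

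Under the Theorem \ref{pcn} hypotheses, $U=\{u_k\}$ is relatively compact in $(F_{B}(\mathbb{R}^m)^p,d_p)$, and I would handle both directions of the equivalence with (iii) by the subsequence-of-subsequence argument. For (ii) $\Rightarrow$ (iii), any subsequence $\{u_{k_j}\}$ has a further $d_p$-convergent subsequence $u_{k_{j_l}}\to v\in F_B(\mathbb{R}^m)^p$; applying the just-proven (i) $\Leftrightarrow$ (ii) to $\{u_{k_{j_l}}\}$ and $v$, and combining with the a.e.\ convergence to $u$ supplied by (ii), gives $[v]_\alpha=[u]_\alpha$ for a.e.\ $\alpha$; by left-continuity of the level maps (Theorem \ref{rfbp}(ii)) this equality propagates to every $\alpha\in(0,1]$, so $v=u$ and $u\in F_B(\mathbb{R}^m)^p$. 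Since every subsequence of $\{u_k\}$ has a further subsequence $d_p$-converging to $u$, the metric-space subsequence criterion yields $d_p(u_k,u)\to 0$. For (iii) $\Rightarrow$ (ii), I again pick any subsequence, pass to a further $d_p$-convergent subsequence (necessarily to $u$ by uniqueness of the limit), and invoke Step~1 of the proof of Theorem \ref{pcn} to extract a diagonal sub-sub-subsequence whose level sets converge Hausdorff-a.e.\ on $(0,1]\setminus P(u)$; since the latter set is intrinsic to $u$ and independent of the subsequence, it follows that for each fixed $\alpha\in(0,1]\setminus P(u)$ every subsequence of the real sequence $H([u_k]_\alpha,[u]_\alpha)$ admits a further subsequence tending to $0$, so by the subsequence criterion in $\mathbb{R}$ the full sequence tends to $0$, which is exactly (ii).

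The main obstacle is this last step. Plain $d_p$-convergence gives only $L^p$ convergence of the integrand $\alpha\mapsto H([u_k]_\alpha,[u]_\alpha)$, which in general does \emph{not} imply a.e.\ convergence of the full sequence (only of a subsequence). The point that rescues the argument is that the exceptional set $P(v)$ produced in Step~1 of Theorem \ref{pcn}'s proof depends only on the subsequential limit $v$, not on the particular diagonal extraction used to obtain it; once uniqueness of the $d_p$-limit forces $v=u$ for every sub-subsequence, the cofull set $(0,1]\setminus P(u)$ is a subsequence-independent set of a.e.\ convergence, and this is precisely what allows the subsequence criterion in $\mathbb{R}$ to promote subsequential a.e.\ convergence to full a.e.\ convergence.
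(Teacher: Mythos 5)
Your proof is correct and follows exactly the route the paper itself indicates: the paper states this corollary without a written proof, citing only Lemma \ref{dpce} (which you use, applied to $u^{(r_i)}$ so that its constancy condition is automatic, to get (\romannumeral1)$\Leftrightarrow$(\romannumeral2)) and Theorem \ref{pcn} (whose Step~1 machinery and the intrinsic, subsequence-independent exceptional set $P(u)$ you correctly identify as the key to upgrading subsequential a.e.\ convergence to full a.e.\ convergence in (\romannumeral3)$\Rightarrow$(\romannumeral2)). The only loose phrase is ``applying the just-proven (\romannumeral1)$\Leftrightarrow$(\romannumeral2) to $\{u_{k_{j_l}}\}$ and $v$'' in the (\romannumeral2)$\Rightarrow$(\romannumeral3) direction --- verifying (\romannumeral1) for that pair is not immediate from $d_p(u_{k_{j_l}},v)\to 0$ alone --- but the standard fact that $L^p$ convergence of $\alpha\mapsto H([u_{k_{j_l}}]_\alpha,[v]_\alpha)$ yields a further a.e.-convergent subsequence (which you invoke explicitly elsewhere) closes this without any new idea.
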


\begin{tl}\label{crm}
Suppose that $\{u_k, \ k=1,2,\ldots\} \subset F_{B}   (  \mathbb{R}^m )^p$,
that
$
\{r_i, i=1,2,\ldots\}$ is a decreasing sequence in $(0,1]$ which converges to zero,
  and
that
    $u(r_i)\in F_{GB}   (  \mathbb{R}^m )$, $i=1,2,\ldots$.
Then
the following statements are equivalent.
\\
 (\romannumeral1) \ $d_p( u_k^{(r_i)},   u  (r_i)    ) \to    0$ for all $r_i$.
  \\
  (\romannumeral2) \ There exists a unique
 $u_0 \in F_{GB}$
such that
$
H([u_k]_\alpha, [u_0]_\alpha) \st{   \mbox{a.e.}   }{\to}   0 \ (  [0,1]    )
$
and
$[u_0]_\al =   [u_0^{(r_i)}]_\al =    [u(r_i)]_\al$ when $r_i < \alpha \leq 1$.
Moreover,
for each $r_i$, $H([u_k]_{r_i}, [u(r_i)]_{r_i}) \to 0$ and $[u(r_i)]_{r_i} \equiv [u(r_i)]_{\theta}$ when $\theta\in [0, r_i]$.

Furthermore, if  $U:=\{u_k,\ k=1,2,\ldots\} $ satisfies conditions (\romannumeral1)
and
(\romannumeral2) in Theorem \ref{pcn},
then above   statements (\romannumeral1) and (\romannumeral2) are equivalent
to
below
statement (\romannumeral3).
\\
  (\romannumeral3) \ There exists a unique
 $u_0 \in F_{B}   (  \mathbb{R}^m )^p$
such that
$d_p( u_k,   u_0    ) \to    0$
and
$[u_0]_\al =   [u_0^{(r_i)}]_\al =    [u(r_i)]_\al$ when $r_i < \alpha \leq 1$.
Moreover,
for each $r_i$, $H([u_k]_{r_i}, [u(r_i)]_{r_i}) \to 0$ and $[u(r_i)]_{r_i} \equiv [u(r_i)]_{\theta}$ when $\theta\in [0, r_i]$.
\end{tl}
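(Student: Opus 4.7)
The plan is to reduce this corollary to Lemma \ref{dpce} applied separately at each $r_i$, and then to stitch the resulting level-wise information into a single limit fuzzy set via the representation theorem. The key structural observation is that since $r_i \downarrow 0$, the intervals $[r_i, 1]$ exhaust $(0,1]$, so the a.e.\ convergence statements Lemma \ref{dpce} supplies at each $r_i$ together cover all of $(0,1]$ up to a null set, and the various candidate limits $[u(r_i)]_\beta$ ($\beta > r_i$) must coincide a.e., which by left-continuity forces them to coincide everywhere. This gives a well-defined family of cuts which then assembles into a unique fuzzy set $u_0$.

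For the direction (\romannumeral1) $\Rightarrow$ (\romannumeral2), I would apply Lemma \ref{dpce} with $\alpha = r_i$ to each hypothesis $d_p(u_k^{(r_i)}, u(r_i)) \to 0$. This immediately yields the ``moreover'' clauses $H([u_k]_{r_i}, [u(r_i)]_{r_i}) \to 0$ and $[u(r_i)]_\gamma \equiv [u(r_i)]_{r_i}$ for $\gamma \in [0, r_i]$, and produces $H([u_k]_\beta, [u(r_i)]_\beta) \to 0$ for a.e.\ $\beta \in [r_i, 1]$. For two indices with $r_i > r_j$, both $[u(r_i)]_\beta$ and $[u(r_j)]_\beta$ are pointwise limits of $[u_k]_\beta$ on a set of full measure in $[r_i, 1]$, so they agree a.e.\ on $[r_i, 1]$. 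By the left-continuity of the cut functions of $u(r_i)$ and $u(r_j)$ (property (\romannumeral2) in the representation theorems), writing $[u(r_i)]_\beta = \bigcap_{\gamma<\beta}[u(r_i)]_\gamma$ and intersecting only over a dense sequence of good $\gamma$'s upgrades the a.e.\ equality to an everywhere equality on $(r_i, 1]$. Hence for each $\beta \in (0,1]$ the value $V_\beta := [u(r_i)]_\beta$ is well defined independently of the choice of $r_i < \beta$. The family $\{V_\beta\}_{\beta \in (0,1]}$ inherits left-continuity ($V_\beta = \bigcap_{\gamma<\beta} V_\gamma$) and membership in $K(\mathbb{R}^m)$ from the $u(r_i)$'s; setting $V_0 := \overline{\bigcup_{\beta>0} V_\beta}$ and invoking Theorem \ref{rfbe} (or the obvious $F_{GB}$ analogue, which is proved identically) delivers a unique $u_0 \in F_{GB}(\mathbb{R}^m)$ with $[u_0]_\beta = V_\beta = [u(r_i)]_\beta$ for every $\beta > r_i$. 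This identity, together with the constancy of $[u(r_i)]_\gamma$ below $r_i$, implies $u_0^{(r_i)} = u(r_i)$, and the a.e.\ convergence $H([u_k]_\beta, [u_0]_\beta) \to 0$ on $[0,1]$ follows directly.

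The reverse direction (\romannumeral2) $\Rightarrow$ (\romannumeral1) is the easier half: for each fixed $r_i$, restricting the a.e.\ convergence $H([u_k]_\beta, [u_0]_\beta) \to 0$ to $[r_i,1]$ and using $[u_0]_\beta = [u(r_i)]_\beta$ there gives clause (\romannumeral1) of Lemma \ref{dpce}, while the ``moreover'' statements in (\romannumeral2) are precisely clause (\romannumeral2) of Lemma \ref{dpce}. The sufficiency direction of that lemma then gives $d_p(u_k^{(r_i)}, u(r_i)) \to 0$.

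For the additional equivalence with (\romannumeral3) under the assumption that $U = \{u_k\}$ is uniformly $p$-mean bounded and $p$-mean equi-left-continuous, I would simply invoke Corollary \ref{cccp}: its (\romannumeral2) $\Leftrightarrow$ (\romannumeral3) exactly promotes the a.e.\ level-set convergence in our (\romannumeral2) to $d_p$-convergence together with $u_0 \in F_B(\mathbb{R}^m)^p$, and back again. Uniqueness of $u_0$ in both (\romannumeral2) and (\romannumeral3) is automatic from the injectivity of the cut decomposition. The main technical obstacle I anticipate is the careful assembly step in (\romannumeral1) $\Rightarrow$ (\romannumeral2): the a.e.\ agreements of the $[u(r_i)]_\beta$'s over different $i$ must be promoted to genuine everywhere equalities, and the patched family $\{V_\beta\}$ must be verified to satisfy all the hypotheses of the representation theorem so that a bona fide $F_{GB}$-element exists with the prescribed cuts.
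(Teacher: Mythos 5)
Your proposal is correct and follows the route the paper intends: the paper offers no written proof for this corollary, stating only that it is obtained from Theorem \ref{pcn} and Lemma \ref{dpce}, and your argument is a faithful elaboration of exactly that derivation (Lemma \ref{dpce} applied at each $r_i$, the level sets stitched together via left-continuity of the cuts and the representation theorem, and Corollary \ref{cccp} supplying the equivalence with statement (\romannumeral3)). The assembly step you flag as the main obstacle is handled soundly: the a.e.\ agreement of $[u(r_i)]_\beta$ and $[u(r_j)]_\beta$ on $[r_i,1]$ upgrades to everywhere agreement on $(r_i,1]$ because $[u]_\beta=\bigcap_{n}[u]_{\gamma_n}$ for any sequence $\gamma_n\uparrow\beta$ chosen inside the full-measure agreement set.
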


\begin{re}  {\rm
The readers can see \cite{huang7, trutschnig} for more studies on this topic,
which
 consider the relations between $d_p$ convergence and other types of convergence on fuzzy sets spaces.

}
\end{re}

The foregoing results enable us to
relook   the characterizations of compactness proposed in
previous work.

Compare   Theorem \ref{smpcn} with   Proposition \ref{gep}.
We can see
that,
in contrast to the former,
the latter has an additional condition (\romannumeral3):
\begin{description}
 \item[]
Let $r_i$ be a decreasing sequence in $(0,1]$
converging to zero.
For $\{u_k\} \subset U$,   if $\{u_k^{(r_i)} : k=1,2,\ldots \}$
converges to
$u(r_i) \in E^m $ in $d_p$ metric, then there exists a $u_0 \in E^m$
such that
$[u_0^{(r_i)}]_\al=[u(r_i)]_\al$ when $r_i < \alpha \leq 1$.
\end{description}
The
reason is that $(E^m, d_p)$
is not complete.
The function of ``conditions (\romannumeral3)''
in Proposition \ref{gep}
is to guarantee completeness of the closed subspace $(U, d_p)$ of $(E^m, d_p)$.

In fact,
if $U$ satisfies conditions (\romannumeral1) and (\romannumeral2) of
Proposition \ref{gep},
then from Remark \ref{subc},
given
$\{u_k\} \subseteq U$,
there
exists
a subsequence $\{u_{n_k} \}$ of $\{u_k\} $
such
that
 $\{u_{n_k}^{(r_i)} : k=1,2,\ldots \}$
converges to
$u(r_i) \in E^m $ in $d_p$ metric for each $r_i$.
Thus
from Corollary \ref{crm},
$\{  u_{n_k}  \}$
is
a convergent sequence in $(E^{m,p}, d_p)$,
and
its
limit point    $u_0 \in E^{m,p}$ is
defined
by
$$[u_0]_\al = [u_0^{(r_i)}]_\al : =   [u(r_i)]_\al \ \mbox{when} \ r_i < \alpha \leq 1,$$
for each $\al \in (0,1]$.
So
it
is
easy to see
that
if $U$ satisfies conditions (\romannumeral1) and (\romannumeral2) of
Proposition \ref{gep},
then
condition (\romannumeral3) in Proposition \ref{gep}
is equivalent to
$\overline{U}  \subset E^m$.

Thus we know
that
Proposition \ref{gep}
can be written     in the following 
form:

\noindent\textbf{Proposition \ref{gep}$'$} A closed set $U\subset (E^m, d_p)$ is compact if and only if:
\\
(\romannumeral1) $U$ is uniformly $p$-mean bounded;
\\
(\romannumeral2) $U$ is $p$-mean equi-left-continuous;
\\
(\romannumeral3) $\overline{U}  \subset E^m$.


Similarly,
Propositions \ref{gs0p} and \ref{gsp} can be written as:

\noindent\textbf{Proposition \ref{gs0p}$'$} A closed set $U$ in $(S^m_0, d_p)$ is compact if and only if:
\\
(\romannumeral1) $U$ is uniformly $p$-mean bounded;
\\
(\romannumeral2) $U$ is $p$-mean equi-left-continuous;
\\
(\romannumeral3) $\overline{U}  \subset S^m_0$.

\noindent\textbf{Proposition \ref{gsp}$'$} A closed set $U$ in $ (S^m, d_p)$ is compact if and only if:
\\
(\romannumeral1) $U$ is uniformly $p$-mean bounded;
\\
(\romannumeral2) $U$ is $p$-mean equi-left-continuous;
\\
(\romannumeral3) $\overline{U}  \subset S^m$.

\section{Conclusions}

We
give
a characterization of totally boundedness for fuzzy sets space endowed with $d_p$ metric.
This is
the
key result of this paper.
To prove this conclusion,
we introduce
the
$L_p$-extension of a fuzzy sets space.
Then
we present
a characterization of relatively compactness in $(F_{B}(  \mathbb{R}^m )^p, d_p)$ which is the
$L_p$-extension of
$(F_{B}(  \mathbb{R}^m ), d_p)$.
It's
shown
that this characterization is also a    characterization of totally boundedness in $(F_{B}(  \mathbb{R}^m )^p, d_p)$.
From
this fact, we know
that
this
characterization is a characterization of totally boundedness
in
any subspaces of $(F_{B}(  \mathbb{R}^m )^p, d_p)$
because
totally boundedness
is dependent only
on
the set itself and
the
metric $d_p$.
From this fact,
we
can also deduce that $(F_{B}(  \mathbb{R}^m )^p, d_p)$ is the completion of $(F_{B}(  \mathbb{R}^m ), d_p)$.
Based on this,
we
give characterizations of relatively compactness and compactness
in $(F_{B}(  \mathbb{R}^m ), d_p)$, $(F_{B}(  \mathbb{R}^m )^p, d_p)$ and their subspaces.
It is found
that
the completion of
each fuzzy sets space mentioned in this paper
is precisely
its
$L_p$-extensions.
Relationship
among
all the spaces mentioned in this paper are clarified and summarized in a figure.
At last, as applications of results in
this
paper,
we discuss some properties of $d_p$ convergence
and
relook characterizations of compactness
proposed in previous work.

Since the input-output relation of fuzzy systems can be described using fuzzy functions \cite{zeng, huang8, wa},
the
results in this paper can be used to analysis and design
fuzzy systems.
 Compactness criteria is used to solve fuzzy differential equations \cite{roman2},
so this topic is a potential application of our results.

\end{document}